\theoremstyle{plain}
\newtheorem{thm}{Theorem}
\newtheorem{prop}{Proposition}[section]
\newtheorem{lem}[prop]{Lemma}
\newtheorem{defi}[prop]{Definition}
\newtheorem{rmk}[prop]{Remark}
\newtheorem{example}[prop]{Example}
\newcommand {\R} {\mathbb{R}} \newcommand {\Z} {\mathbb{Z}}
\newcommand {\T} {\mathbb{T}} \newcommand {\N} {\mathbb{N}}
\newcommand {\C} {\mathbb{C}} 
\newcommand {\p} {\partial}
\newcommand {\supp} {\text{supp}}
\newcommand {\conv} {\text{conv}}
\newcommand{\Sd}{\R^{d\times d}_\textrm{sym}}
\newcommand{\Stw}{\R^{2\times 2}_\textrm{sym}}
\DeclareMathOperator{\tr}{tr}
\DeclareMathOperator{\diag}{diag}
\DeclareMathOperator{\argmin}{argmin}
\DeclareMathOperator {\dist} {dist}
\DeclareMathOperator{\F} {\mathcal{F}}
\newcommand{\st}[1]{\qquad\text{#1}\qquad}
\newcommand{\RA}{\qquad\Rightarrow\qquad}
\newcommand{\boldface}[1]{\boldsymbol{#1}}  
\newcommand{\bfk}{\boldface{k}}
\newcommand{\be}{\begin{equation}}
	\newcommand{\ee}{\end{equation}}
\newcommand{\beq}{\begin{eqnarray}}
	\newcommand{\eeq}{\end{eqnarray}}
\newcommand{\bem}{\begin{multline}}
	\newcommand{\eem}{\end{multline}}
\newcommand{\ba}{\begin{align}}
	\newcommand{\ea}{\end{align}}
\title[On a $T_3$-Structure in Geometrically Linearized Elasticity]{On a $T_3$-Structure in Geometrically Linearized Elasticity: Qualitative and Quantitative Analysis and Numerical Simulations}
\author[R.~Indergand]{Roman Indergand}
\address{ETH Zürich, Mechanik und Materialforschung,
LEE N 201, Leonhardstrasse 21, 8092 Zürich, Switzerland
}
\email{iroman@mavt.ethz.ch}
\author[D.~Kochmann]{Dennis Kochmann}
\address{ETH Zürich, Mechanik und Materialforschung,
LEE N 201, Leonhardstrasse 21, 8092 Zürich, Switzerland
}
\email{dmk@ethz.ch}
\author[A.~Rüland]{Angkana Rüland}
\address{Institut  f\"ur Angewandte Mathematik and Hausdorff Center for Mathematics, Universit\"at Bonn, Endenicher Allee 60, 53115 Bonn, Germany}
\email{rueland@uni-bonn.de}
\author[A.~Tribuzio]{Antonio Tribuzio}
\address{Institut  f\"ur Angewandte Mathematik, Universit\"at Bonn, Endenicher Allee 60, 53115 Bonn, Germany}
\email{antonio.tribuzio@uni-bonn.de}
\author[C.~Zillinger]{Christian Zillinger}
\address{Karlsruhe Institute of Technology, Englerstraße 2, 76131 Karlsruhe, Germany}
\email{christian.zillinger@kit.edu}
\begin{document}

\begin{abstract}
We study the rigidity properties of the $T_3$-structure for the symmetrized gradient from \cite{BFJK94} qualitatively, quantitatively and numerically. More precisely, we complement the \emph{flexibility} result for \emph{approximate} solutions of the associated differential inclusion which was deduced in \cite{BFJK94} by a \emph{rigidity} result on the level of \emph{exact} solutions and by a quantitative rigidity estimate and scaling result. The $T_3$-structure for the symmetrized gradient from \cite{BFJK94} can hence be regarded as a symmetrized gradient analogue of the Tartar square for the gradient.
As such a structure cannot exist in $\R^{2\times 2}_{sym}$ the example from \cite{BFJK94} is in this sense minimal. We complement our theoretical findings with numerical simulations of the resulting microstructure.
\end{abstract}

\maketitle

\section{Introduction}

Motivated by quantitative rigidity results for singular perturbation problems
for the Tartar square and by problems from the materials sciences, it is the
purpose of this article to study a particular $T_3$-structure which had been introduced in \cite{BFJK94} for the \emph{symmetrized gradient}. More precisely, we consider qualitative and quantitative properties of microstructures associated with the differential inclusion
\begin{align}
\label{eq:BFJK}
e( u) \in K_3:=\left\{ e^{(1)}, e^{(2)}, e^{(3)}
 \right\} \subset \R^{3\times 3}_{sym}
 \mbox{ a.e. in } \Omega ,
\end{align}
for a general $\Omega\subset\R^3$ open, bounded and connected, and with
\begin{align*}
e^{(1)} = \begin{pmatrix}
\eta_3 & 0 & 0 \\
0 & \eta_1 & 0\\
0 & 0 & \eta_2
\end{pmatrix}, \
e^{(2)} = \begin{pmatrix}
\eta_2 & 0 & 0 \\
0 & \eta_3 & 0\\
0 & 0 & \eta_1
\end{pmatrix}, \
e^{(3)} = \begin{pmatrix}
\eta_1 & 0 & 0 \\
0 & \eta_2 & 0\\
0 & 0 & \eta_3
\end{pmatrix}.
\end{align*}
Here $e( u):= \frac{1}{2}(\nabla u + (\nabla u)^t)$ denotes the symmetrized gradient and the entries
 $\eta_j \in \R$ satisfy the conditions of \cite{BFJK94}:
\begin{align}
\label{eq:elliptic}
\eta_2< \eta_1 < \eta_3, \ \eta_2 + \eta_3 > 2\eta_1.
\end{align}
We recall that this differential inclusion was introduced in \cite{BFJK94}.
Here, as in the classical $m$-well problem, the condition \eqref{eq:elliptic}
can be viewed as an ellipticity condition. It excludes the presence of
symmetrized rank-one connections between the wells and thus of (trivial)
simple-laminate microstructures, i.e., certain one-dimensional solutions. It is
one of the objectives of the present article to highlight the parallel
between the Tartar square and the $T_3$-structure from \cite{BFJK94} qualitatively and quantitatively. 
In what follows below, we thus study both the structure of \emph{exact} solutions to the associated differential inclusion problem \eqref{eq:BFJK} and energetic quantifications of these. As a key novel difficulty compared to the setting of the Tartar square, in our investigation of the rigidity and flexibility properties of \eqref{eq:BFJK} we are confronted with a situation in which \emph{gauges} are present. These arise in the form of the infinitesimal frame indifference encoded in the gauge group $\mbox{skew}(3)$ and provide novel challenges which have to be addressed in our analysis.

\subsection{The Tartar square and its geometrically linearized analogues}
In order to put the $T_3$-structure from above into its context, we recall the \emph{Tartar square}. This is a well-known set $K_4 \subset \R^{2\times 2}_{diag}$ consisting of four diagonal matrices without rank-one connections. It plays an important role in various inner-mathematical and materials science applications and has been discovered in various contexts \cite{T93,Sch75,CT93,NM91,S93,FS08,MS,MS03} (see also \cite{M1} for further remarks and references). In particular, it is the simplest example of a set for which a first (weak) dichotomy between rigidity and flexibility occurs in the context of the $m$-well problem for the \emph{gradient} in spite of the absence of rank-one connections:
\begin{itemize}
\item On the one hand, for $\Omega \subset \R^2$ open, bounded and connected, \emph{exact} solutions $u \in W^{1,\infty}(\Omega, \R^2)$ to the differential inclusion $\nabla u \in K_4$ a.e. in $\Omega$ are \emph{rigid} in the sense that only affine solutions exist to this differential inclusion problem. 
\item On the other hand, \emph{approximate} solutions are flexible in the sense that there exists a sequence $(u_k)_{k\in \N} \subset W^{1,\infty}(\Omega, \R^2)$ such that
$\dist(\nabla u_k, K_4) \rightarrow 0$ in measure but $(\nabla u_k)_{k \in \N}$ does \emph{not} converge to a constant matrix in measure along any subsequence.
\end{itemize}
This is a rather striking behaviour as it is known that for any set $K_m$ consisting of $m \in\{1,2,3\}$ matrices which are pairwise not rank-one connected, \emph{both} the exact and the approximate problems are rigid. Thus, for $m=4$ a first loss of rigidity occurs in that rigidity of approximate solutions may fail (for specific choices of the sets $K_m$). By a result due to Chleb\'ik and Kirchheim \cite{CK00} the Tartar square, in a precise sense, is a minimal example of this. For $m\geq 5$ the problem may become completely flexible in that there exists a set $K_5$ such that \emph{both} the approximate and exact problems loose rigidity \cite{K1,FS17}. We refer to the articles \cite{B90,Sverak,CK00,P10} for further references on the $m$-well problem for the gradient, as well as to the books and lecture notes \cite{D,M1,Ri18} for an overview of the seminal literature on this.

Motivated by this dichotomy for the gradient and by problems from the materials sciences which inherently include (infinitesimal) frame indifference and thus associated \emph{gauges}, in the present article we study qualitative and quantitative properties of the (almost) physical system from \cite{BFJK94} for the \emph{symmetrized gradient}. Since any differential inclusion for the symmetrized gradient can also be interpreted as a differential inclusion for the gradient (involving affine vector spaces), the results on the loss of rigidity for the gradient can be embedded into this context. In particular, for the symmetrized gradient rigidity for approximate solutions can thus at best hold for $m\in \{1,2,3\}$. An analogous observation holds for exact solutions. In this context, it is well-known that rigidity for the differential inclusion problem for the symmetrized gradient holds for $m=1,2$ (c.f. the discussion in \cite{BFJK94}, e.g., Theorem 1.4). 

In contrast to the $m$-well problem for the gradient, however, for the symmetrized gradient the first loss of rigidity -- in the sense of flexibility for approximate solutions -- already occurs at $m=3$. 
Indeed, the article \cite{BFJK94} gives an (almost) physical example of a set $K_3 \subset \R^{3\times 3}$ such that \emph{approximate} solutions to the differential inclusion for the \emph{symmetrized} gradient are flexible. Just as the Tartar square, this set of matrices carries the structure of a (symmetrized) $T_N$-set (see Definition \ref{def:T3}, in the case $N=3$, below and also \cite{KMS03} for the more general case).
It is the main goal of this article to qualitatively, quantitatively and numerically study the associated rigidity properties of this set and to investigate the role of the presence of the non-trivial gauge group $\mbox{skew}(3)$.

\subsection{Main results in the qualitative setting}

The objective of the first part of this article is to complement the \emph{flexibility} properties for approximate solutions from \cite{BFJK94} by investigating the \emph{qualitative rigidity properties} of exact solutions to the differential inclusion \eqref{eq:BFJK}. To this end, we combine the compatibility conditions with the associated structure of the set in matrix space. Our first main result reads as follows.

\begin{thm}[Rigidity of the exact inclusion]
\label{thm:rigidity0}
Let $\Omega \subset \R^3$ be a bounded, simply connected Lipschitz set.
Let $e_{11}, e_{22}, e_{33}: \Omega \rightarrow \R$ be the diagonal
components of the strain matrices $e( u)$ solving \eqref{eq:BFJK} and let \eqref{eq:elliptic} hold. Then, all functions $e_{jj}$, $j\in\{1,2,3\}$, are constant.
\end{thm}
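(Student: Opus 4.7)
Set $a := e_{11}$, $b := e_{22}$, $c := e_{33}$. At a.e.\ point of $\Omega$ the triple $(a,b,c)$ is a cyclic permutation of $(\eta_1,\eta_2,\eta_3)$, so $a + b + c \equiv \eta_1+\eta_2+\eta_3$ is constant and the off-diagonal entries of $e(u)$ vanish identically. Saint--Venant compatibility $\curl\,\curl\, e(u)=0$ in $\mathcal{D}'(\Omega)$, specialized to a symmetric tensor whose only nonzero entries are the diagonal ones, yields the six distributional relations
\[
\partial_2\partial_3 a = \partial_1\partial_3 b = \partial_1\partial_2 c = 0, \qquad \partial_{22}a + \partial_{11}b = 0 \ \text{(and cyclic)}.
\]
Combining the three paired relations with the constant-trace identity produces the hyperbolic-type equations $(\partial_{11} - \partial_{22} - \partial_{33})a = 0$, and cyclically for $b$ and $c$.

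Next, I would pass to an open cube $Q \Subset \Omega$. From $\partial_2\partial_3 a = 0$ one splits $a = f(x_1,x_2) + g(x_1,x_3)$; inserting into the wave equation for $a$ forces $\partial_{11}f - \partial_{22}f = -(\partial_{11}g - \partial_{33}g) = h(x_1)$, so that $f$ and $g$ each satisfy a one-dimensional wave equation in their two variables up to a common $x_1$-only remainder which, by boundedness of $a$, reduces to a constant. After normalizing the additive ambiguity, one obtains
\[
a(x) = F_1(x_1+x_2) + F_2(x_1-x_2) + G_1(x_1+x_3) + G_2(x_1-x_3) + \text{const}
\]
with $F_i,G_j \in L^\infty(\R)$, and analogous decompositions hold for $b$ (in $x_1\pm x_2$ and $x_2\pm x_3$) and $c$ (in $x_1\pm x_3$ and $x_2\pm x_3$).

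Restricted to a slice $\{x_1 = x_1^0\}$, $a$ becomes the separable sum $p(x_2) + q(x_3)$ taking values in $\{\eta_1,\eta_2,\eta_3\}$. A direct case analysis shows this forces $p$ or $q$ to be essentially constant on the slice: the only alternative -- both two-valued with equal jumps -- would place $\{\eta_1,\eta_2,\eta_3\}$ into arithmetic progression, contradicting \eqref{eq:elliptic}. By measurability, at least one of the two alternatives must hold on a set of $x_1$-values of positive measure. Applying a Steinhaus/Pexider-type translation-invariance argument to the ensuing relation $F_1'(x_1+x_2) = F_2'(x_1-x_2)$ (or its $G$-analogue), combined with boundedness, forces a pair of the four univariate summands of $a$ to be constant; in particular $a$ depends on at most one of its two pairs of directions. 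Since $b = \phi(a)$ for the bijection $\phi$ induced by the cyclic permutation structure, the same decomposition-plus-boundedness argument applied to the additive structure of $b$ then forces $b$ to be a single constant, from which $a = \phi^{-1}(b)$ and $c = \psi(a)$ are likewise constants. Connectedness of $\Omega$ globalizes the conclusion from $Q$ to all of $\Omega$.

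The hardest step will be the third one: promoting the slicewise dichotomy into genuine constancy of the univariate components $F_i, G_j$. The ellipticity condition \eqref{eq:elliptic} is essential here, as it is precisely what excludes the arithmetic-progression configuration of $\{\eta_1,\eta_2,\eta_3\}$ that would otherwise allow a non-trivial two-by-two decomposition on a slice. Secondary technicalities include normalizing the additive decomposition so that the $F_i, G_j$ are actually bounded, and carrying out the Steinhaus-type invariance argument at only distributional regularity.
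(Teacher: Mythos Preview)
Your outline shares the essential ingredients with the paper's proof: the Saint--Venant compatibility conditions, the decomposition of each $e_{jj}$ into four diagonal plane waves (this is the paper's Proposition~\ref{prop:structure-def}), and the ellipticity condition~\eqref{eq:elliptic} read as the statement that $\eta_1,\eta_2,\eta_3$ are \emph{not} in arithmetic progression. The paper packages this last observation differently, as a ``determinedness of differences'' lemma (Lemma~\ref{lem:determined}): the value of $e_{jj}(x)-e_{jj}(y)$ uniquely determines $e_{kk}(x)-e_{kk}(y)$, because under~\eqref{eq:elliptic} the seven possible differences $0,\pm(\eta_i-\eta_j)$ are all distinct. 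This lets the paper transfer the vanishing of a mixed finite difference $\partial_2^{h_2}\partial_3^{h_3}e_{11}=0$ directly to $\partial_2^{h_2}\partial_3^{h_3}e_{22}=0$, after which each pair of waves is shown to be quadratic (Lemma~\ref{lem:two_waves}) and hence constant by discreteness. Your slice-and-Steinhaus route is a valid alternative to this transfer step, though heavier.

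There is, however, a genuine gap in your final step. After you conclude that $a$ depends only on, say, $(x_1,x_2)$ and write $b=\phi(a)$, you propose to ``apply the same decomposition-plus-boundedness argument to the additive structure of $b$'' to force $b$ constant. This does not work: $b$'s wave decomposition is in the directions $x_1\pm x_2$ and $x_2\pm x_3$, and since $b=b(x_1,x_2)$ the $x_2\pm x_3$ waves are already constant. What remains is $b=\tilde F_1(x_1+x_2)+\tilde F_2(x_1-x_2)+C$, and slicing this by either $x_1$ or $x_2$ yields a function of a \emph{single} variable, to which your two-variable arithmetic-progression dichotomy does not apply. You have merely recovered that $b$ depends on $(x_1,x_2)$, which you already knew from $b=\phi(a)$.

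The fix is to route through $c=e_{33}=\psi(a)$ instead of $b$. Since $c=c(x_1,x_2)$ and its wave decomposition lives entirely in the $x_1\pm x_3$ and $x_2\pm x_3$ directions, the $x_3$-independence forces (either via your AP/Steinhaus argument again, or more directly by differentiating the wave identity in $x_3$) all four wave components of $c$ to be quadratic polynomials; being three-valued, $c$ is then constant, and $a,b$ follow. Alternatively, note that the Saint--Venant relation $\partial_1\partial_2 c=0$ now gives $c(x_1,x_2)=c_1(x_1)+c_2(x_2)$, to which your arithmetic-progression argument \emph{does} apply, reducing $c$ (hence $a,b$) to a single variable; the relation $\partial_{11}b+\partial_{22}a=0$ then makes that single-variable function affine, hence constant by discreteness. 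Either way the key is that $c$'s wave directions are transverse to the $(x_1,x_2)$-plane, whereas $b$'s are not.
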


As a consequence, the example from \cite{BFJK94} can indeed be considered as one of the simplest and also one of the most fundamental analogues of the Tartar square for the symmetrized gradient. Indeed, it is an example for which there is a first loss of rigidity in that approximate solutions are flexible while exact solutions are rigid. 
In this regard, in \cite{BFJK94} it has been also proved that two-dimensional Young measures that are limits of strains and are supported on a discrete set of incompatible symmetric matrices are Dirac deltas.
We translate this result into our context in Proposition \ref{prop:2d} and thus recall the absence of $T_3$-structures for two-dimensional strains. This justifies how the set in \eqref{eq:BFJK} may be interpreted as the simplest analogue of the Tartar square.

We emphasize that the combination of Proposition \ref{prop:2d} and Theorem
\ref{thm:rigidity0} illustrates that, in contrast to the gradient case, for
the symmetrized gradient there is a difference between the two- and three-dimensional settings in terms of rigidity and (approximate) flexibility
properties. In particular, our discussion suggests that a two-dimensional
reduction as used in \cite{CK00} may not exist for the symmetrized gradient --
at least if it is such that also approximate flexibility is preserved by it --
and thus necessitates a three-dimensional rigidity analysis as carried out
below. In addition to this,  our three-dimensional analysis is also of
independent interest as it provides a strategy towards a quantitative
understanding of $T_3$-structures for the symmetrized gradient, in parallel to
the analysis from \cite{RT22}. We turn to this in the second part of this article.

Moreover, we highlight that the example from \cite{BFJK94} (and thus of the inclusion \eqref{eq:BFJK}) is of particular relevance in the context of martensitic phase transitions since it corresponds to an (almost) physical setting. Indeed, already in \cite{BFJK94} it is pointed out that this example is a type of cubic-to-orthorhombic transformation which however is not observed experimentally. Moreover, the inclusion for the symmetrized gradient takes into account (infinitesimal) frame indifference -- an important feature of martensitic phase transformations --, while the classical $m$-well problem does not.

We emphasize that our analysis is strongly tied to the specific structure of the set in \eqref{eq:BFJK}. It remains an interesting open problem to further study the analogue of the $T_m$ problem for the symmetrized gradient more systematically. We hope to return to this in future projects.

\subsection{Main results in the quantitative setting}
In the second part of this article, we turn to the \emph{quantitative rigidity properties} of the symmetrized $T_3$-structure from \cite{BFJK94}. In order to formulate the precise set-up and our results, we introduce elastic and surface energies. Considering the torus of side length one in each direction, $\Omega = \T^3$, in what follows below, the elastic energy is modelled by a quadratic Hooke's law
\begin{align}
\label{eq:elast}
E_{el}(u, \chi) := \int\limits_{\Omega}|e(u)- \chi|^2 dx,\
\end{align}
where
\begin{align}
\label{eq:chi}
\chi = \diag(\eta_3 \chi_1 + \eta_2 \chi_2 + \eta_1 \chi_3, \eta_1 \chi_1 + \eta_3 \chi_2 + \eta_2 \chi_3, \eta_2 \chi_1 + \eta_1 \chi_2 + \eta_3 \chi_3).
\end{align}
Here the functions $\chi_j : \Omega \rightarrow \{0,1\}$ with $\chi_1 + \chi_2 + \chi_3 = 1 \mbox{ a.e.~in }\Omega$ are viewed as phase indicators, keeping track of the variant of martensite at a respective material point. 
The elastic energy thus measures the deviation from being an exactly stress-free solution to the differential inclusion \eqref{eq:BFJK}. The surface energy in turn quantifies the presence of high oscillations and yields control on the transition between the wells:
\begin{align}
\label{eq:surf}
E_{surf}(\chi):= \sum\limits_{j=1}^{3} \int\limits_{\Omega}|D \chi_j|.
\end{align}
Here $\int\limits_{\Omega}|D \chi_j|$ denotes the BV semi-norm of the function $\chi_j$.
We combine both contributions into the full energy 
\begin{align}
\label{eq:energy_total}
E_{\epsilon}(u,\chi):= E_{el}(u,\chi) + \epsilon E_{surf}(\chi).
\end{align}
In this context, the parameter $\epsilon>0$ should be considered as a small parameter; we are particularly interested in the limit $\epsilon \rightarrow 0$.
We consider the total energy as a regularization of the elastic contribution and seek to investigate its scaling properties in the singular perturbation parameter $\epsilon>0$. We view this combination of elastic and surface energies as encoding the complexity of the resulting microstructure and, in particular, of approximate solutions.

As in the setting of the Tartar square, the matrices $e^{(j)}$, $j\in \{1,2,3\}$, are not symmetrized rank-one connected. However, as is characteristic of $T_m$ structures, there are three auxiliary matrices,  
\begin{align*}
J_1:= \begin{pmatrix}
\kappa & 0 & 0 \\
0 & \eta_1 & 0\\
0 & 0 & \eta_1
\end{pmatrix}, \
J_2:= \begin{pmatrix}
\eta_1 & 0 & 0 \\
0 & \kappa & 0\\
0 & 0 & \eta_1
\end{pmatrix},\
J_3:= \begin{pmatrix}
\eta_1 & 0 & 0 \\
0 & \eta_1 & 0\\
0 & 0 & \kappa
\end{pmatrix},
\end{align*}
with $\kappa = \eta_2+ \eta_3 - \eta_1$ which are pairwise symmetrized rank-one connected and which are each symmetrized rank-one connected to one of the wells $e^{(j)}$, $j\in\{1,2,3\}$ (see Lemma \ref{lem:BFJK94} and the discussion around Lemma 3.3 in \cite{BFJK94}). We refer to Figure \ref{fig:T3} for an illustration of this relation. The matrices $J_1,J_2, J_3$ play a fundamental role in our quantitative analysis below. Here the chosen labelling of the matrices $J_j$ will be particularly convenient. In turn, it is this ordering, which is also at the origin of the slightly unconventional labelling of the strain matrices $e^{(j)}$ in \eqref{eq:BFJK} (which deviates from the ordering from \cite{BFJK94}).

\begin{figure}[t]
\includegraphics[width = 0.5 \textwidth]{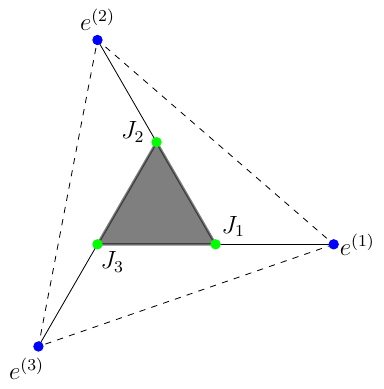}
\caption{A schematic illustration of a $T_3$-structure as in \eqref{eq:BFJK}. The inner triangle is spanned by the auxiliary matrices $J_{1}, J_{2}, J_3$ (green points). These are pairwise symmetrized rank-one connected as indicated by the lines between them. The outer (rotated) triangle consists of the wells $e^{(1)}, e^{(2)}, e^{(3)}$. As indicated by the dashed lines, these are not symmetrized rank-one connected. However, as illustrated by the straight full lines, each of these is symmetrized rank-one connected to one of the three auxiliary matrices $J_1, J_2, J_3$.
}
\label{fig:T3}
\end{figure}

With the help of these definitions and auxiliary observations, we can formulate our first quantitative result on the $T_3$-structure from \eqref{eq:BFJK}.

\begin{thm}[Rigidity]
\label{thm:rigidity}
Let $\bar{e} \in \R^{3\times 3}_{sym}$ be such that $\bar{e}\in \conv\{J_1,J_2,J_3\}\cup \bigcup\limits_{j=1}^3 [J_j, e^{(j)}]$.
Let $E_{\epsilon}(u,\chi)$ be as in \eqref{eq:energy_total} with $\Omega = \T^3$ and assume that $u \in \mathcal{A}_{\overline{e}}$ with
\begin{align}
\label{eq:admissible}
\mathcal{A}_{\bar{e}}:= \left\{ u\in H^1_{loc}(\R^3, \R^3): \ \nabla u \in L^2(\T^3, \R^{3\times 3}), \ \langle e( u) \rangle  = \bar{e}   \right\},
\end{align}
where $\langle e( u )\rangle := \int\limits_{\T^3} e( u) dx$ denotes the average of $e( u)$ on $\T^3$.
Then for all $\nu \in (0,1)$ there exist $\epsilon_0>0$ and $c_{\nu}>0$ such that for all $\epsilon \in (0,\epsilon_0)$ we have that
\begin{align*}
\|e(u)- \bar{e}\|^4_{L^2(\T^3)} \leq \exp(c_{\nu}(|\log(\epsilon)|^{\frac{1}{2} + \nu}+1)) E_{\epsilon}(u,\chi),
\end{align*}
for every $\chi\in BV(\T^3;K_3)$
\end{thm}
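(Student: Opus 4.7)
I would mimic the Fourier-based commutator and iteration approach developed for the quantitative rigidity of the Tartar square in \cite{RT22}, with modifications to accommodate the \emph{symmetrized} gradient and the associated infinitesimal frame indifference. Writing $\chi=\sum_{j=1}^{3}\chi_j e^{(j)}$ and using Plancherel, the elastic energy reads $\sum_{k\in\Z^3}|\widehat{e(u)}(k)-\hat{\chi}(k)|^2$ while the surface energy controls $\sum_k |k|\,|\hat{\chi}_j(k)|^2$ in a weak Marcinkiewicz-type sense. The hypothesis $\bar e\in\conv\{J_1,J_2,J_3\}\cup\bigcup_j[J_j,e^{(j)}]$ places $\bar e$ in the laminate-convex hull of the wells, so after subtracting the constant $\bar e$ and (up to an $E_{el}^{1/2}$ error) the average $\langle\chi\rangle$, the estimate reduces to controlling $\|\chi-\langle\chi\rangle\|_{L^2(\T^3)}^{4}$ by the energy with the announced loss factor.

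\textbf{Compatibility in Fourier.} The first input is Saint-Venant compatibility: for each $k\in\Z^3\setminus\{0\}$ the Fourier coefficient $\widehat{e(u)}(k)$ lies in the three-dimensional subspace $V_k:=\{\tfrac12(v\otimes k+k\otimes v):v\in\R^3\}\subset\Sth$. Let $P_k$ denote the orthogonal projection onto $V_k$. Since $\widehat{e(u)}(k)=P_k\widehat{e(u)}(k)$, one obtains mode by mode the defect estimate
\begin{equation*}
\sum_{k\neq 0}|(\Id-P_k)\hat\chi(k)|^2 \le E_{el}(u,\chi),
\end{equation*}
i.e., the components of $\hat\chi(k)$ transverse to $V_k$ are globally $L^2$-small.

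\textbf{$T_3$-iteration.} I would then exploit the $T_3$-geometry of Figure \ref{fig:T3}: the differences $e^{(j)}-J_j$ and $J_i-J_j$ are symmetrized rank-one, while no $e^{(j)}-e^{(\ell)}$ is. Writing each $\chi_j$ as a linear combination of compatible ``level-one'' increments along these rank-one directions and splitting Fourier space dyadically into annuli $|k|\sim 2^m$, the projection estimate above yields, at each scale, an inequality of the form
\begin{equation*}
\sum_{|k|\sim 2^m}|\hat\chi(k)|^2 \le C\Bigl(\text{low-frequency part}+\sum_{|k|\sim 2^{m-1}}|\hat\chi(k)|^2 + 2^{-m}\epsilon E_{surf}\Bigr),
\end{equation*}
a Fourier-side Caccioppoli-type step. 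Iterating across roughly $\log_2(\epsilon^{-1})$ dyadic levels and paying at each level a small power of $|\log\epsilon|$ for an interpolation that compensates the three-dimensional codimension of $V_k$ (where a clean $L^2$ bound would otherwise fail), one ends up with a cumulative factor $\exp(c_\nu|\log\epsilon|^{1/2+\nu})$. Squaring the resulting $L^2$-control on $\chi-\langle\chi\rangle$ and combining it with the elastic defect via the triangle inequality then produces the fourth-power estimate for $e(u)-\bar e$.

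\textbf{Main obstacle.} The delicate step is propagating the compatibility estimate along the $T_3$-chain. Unlike the four-well Tartar configuration, the $T_3$ geometry provides only three rank-one bridges through the auxiliary matrices $J_j$, so each iteration step has less room to redistribute a frequency-localized error. Extracting from the codimension-three constraint enough mode-by-mode information on the scalar phase indicators $\chi_j$ hinges on the specific algebraic alignment between $V_k$ and the rank-one directions of $e^{(j)}-J_j$ and $J_i-J_j$; this is precisely why the labelling chosen after \eqref{eq:surf} is essential. Balancing the number of iteration steps against the per-step logarithmic loss, while preserving the infinitesimal-frame-indifference structure of $e(u)$, is what ultimately pins down the exponent $\tfrac12+\nu$ in the result.
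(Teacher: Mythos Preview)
Your proposal has a genuine gap: the argument you sketch is entirely \emph{linear} in $\chi$, and no purely linear Fourier-side iteration can yield the desired rigidity. The compatibility projection $(\Id-P_k)\hat\chi(k)$ and the surface energy give only that $\hat\chi$ is concentrated near the six symmetrized rank-one directions $b_{ij}$ from \eqref{eq:sym_rank_one}, up to controlled errors (this is Lemma~\ref{lem:elast2} and Lemma~\ref{lem:decomp}). But that information alone is compatible with $\chi$ being any bounded combination of six plane waves and does not force smallness of $\|\chi-\langle\chi\rangle\|_{L^2}$; the Caccioppoli-type step you write down, relating mass at scale $2^m$ to mass at scale $2^{m-1}$, simply does not follow from the projection identity.

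What the paper's proof uses and your outline omits is the \emph{nonlinear determinedness} of the wells: by the ellipticity condition \eqref{eq:elliptic}, each diagonal entry $\chi_{ii}$ determines the others, and this is expressed pointwise through polynomials $P_{ij\ell}$ with $\partial_\ell^h\chi_{jj}=P_{ij\ell}(\partial_\ell^h\chi_{ii})$ for all $h$ (equation \eqref{eq:poly}). The iteration (Lemmas~\ref{lem:induction_basis} and~\ref{lem:inductive_step}) proceeds by first isolating, via a finite difference $\partial_\ell^h$ with carefully chosen step size, only the two waves in $\tilde\chi_{jj}$ depending on $b_{ij},b_{ji}$ (the remaining four being small by Lemma~\ref{lem:finite_diff}); then applying $P_{ij\ell}$ to transfer this to $\tilde\chi_{ii}$; and finally comparing Fourier supports --- the polynomial of waves in the $(b_{12},b_{21})$-plane has support essentially disjoint from the $(b_{23},b_{32})$-cones above a threshold, forcing a support reduction by a factor $M\mu$ (not $1/2$). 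The loss per step comes from Calder\'on--Zygmund bounds in the nonlinear reduction (Lemma~\ref{cor:product}), and the number of steps is $m\sim|\log\epsilon|^{1/2}$ after the optimization $\mu\sim\epsilon^{|\log\epsilon|^{-1/2}}$; it is this balance, not a dyadic count of $|\log\epsilon|$ levels, that produces the exponent $\tfrac12+\nu$.
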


Let us comment on this result: As for the Tartar square, Theorem \ref{thm:rigidity} quantifies both the \emph{rigidity} properties (on the level of exactly stress-free solutions) and the \emph{flexibility} (on the level of approximate solutions) of the $T_3$-structure \eqref{eq:BFJK}. Indeed, on the one hand, the bound measuring the deviation from the constant state $\bar{e}$ encodes the fact that stress-free solutions are necessarily constant and thus rigid. On the other hand, the scaling law of the form $\exp(c_{\nu}|\log(\epsilon)|^{\frac{1}{2} + \nu})$ encodes the \emph{flexibility} of having nontrivial approximate solutions to the differential inclusion \eqref{eq:BFJK}. The slow, subalgebraic, superlogarithmic behaviour as $\epsilon \rightarrow 0$ however gives a strong indication that this requires highly complex microstructures -- laminates of infinite order -- which match the observations found in \cite{BFJK94} (see the discussion of the numerical results in Section \ref{sec:numerics} below).

\begin{rmk}
\label{rmk:loglog}
We remark that instead of phrasing the results in Theorems \ref{thm:rigidity}, \ref{thm:scaling} and \ref{thm:rigidity_Dirichlet} (in Section \ref{sec:Dirichlet}) with the (arbitrarily small) algebraic losses encoded in the power $\nu \in (0,1)$, it would also be possible to formulate this with (even smaller) $\log(|\log(\epsilon)|)$-type losses. Indeed, our proof in Section \ref{sec:thms_proof} is phrased in this way. As we expect that the optimal behaviour does not require any form of these losses while our current method of proof cannot avoid these losses (due to the use of Calder\'on Zygmund estimates) we did not optimize these dependences further.
\end{rmk}

The bounds of Theorem \ref{thm:rigidity} further entail a resulting scaling law.
This law displays a subalgebraic, superlogarithmic behaviour as in the Tartar setting.

\begin{thm}[Scaling law]
\label{thm:scaling}
Let $E_{\epsilon}(u,\chi)$ be as above with $\Omega = \T^3$.
Let $\bar{e} \in \R^{3\times 3}_{sym}$ be such that $\bar{e}\in \conv\{J_1,J_2,J_3\}\cup \bigcup\limits_{j=1}^3 [J_j, e^{(j)}]$
and let 
\begin{align*}
E_{\epsilon}(\bar{e}) := \inf\limits_{\chi \in BV(\Omega, \{e^{(1)}, e^{(2)}, e^{(3)}\})} \inf\limits_{u \in \mathcal{A}_{\overline{e}}} E_{\epsilon}(u,\chi),
\end{align*}
where $\mathcal{A}_{\overline{e}}$ is as in \eqref{eq:admissible}.
Then for all $\nu\in (0,1)$ there exist $\epsilon_0>0, c_{\nu}>0, C_1, C_2,C>0$ and functions $r_1:(0,\epsilon_0) \rightarrow \R_+, r_1(t) = C_1 \exp(-c_{\nu}|\log(t)|^{\frac{1}{2} + \nu})$, $r_2:(0,\epsilon_0) \rightarrow \R_+, r_2(t) = C_2 \exp(-C|\log(t)|^{\frac{1}{2} })$ such that for all $\epsilon \in (0,\epsilon_0)$ we have that
\begin{align*}
r_1(\epsilon)\leq E_{\epsilon}(\bar{e}) \leq r_2(\epsilon).
\end{align*}
\end{thm}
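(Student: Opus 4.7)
The plan is to split the proof of the scaling bounds into a lower and an upper bound. The lower bound will follow from Theorem \ref{thm:rigidity} via a short case distinction, while the upper bound will require an explicit near-optimal microstructure and constitutes the main technical part.

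For the lower bound I would proceed as follows. Given an admissible pair $(u,\chi)$ with $u\in\mathcal{A}_{\bar{e}}$ and $\chi\in BV(\T^3;K_3)$, I distinguish two regimes. Either $\|e(u)-\bar{e}\|_{L^2(\T^3)}^2\ge c_0$ for a suitable constant $c_0>0$ depending only on $\dist(\bar{e},K_3)$, in which case Theorem \ref{thm:rigidity} immediately yields $E_\epsilon(u,\chi)\ge c_0^2\exp(-c_\nu(|\log\epsilon|^{1/2+\nu}+1))$; or else $\|e(u)-\bar{e}\|_{L^2}^2<c_0$, in which case the pointwise estimate $|\chi-\bar{e}|\ge \dist(\bar{e},K_3)$ together with the triangle inequality gives $E_{el}(u,\chi)=\|e(u)-\chi\|_{L^2}^2 \ge (\dist(\bar{e},K_3)-\sqrt{c_0})^2$, a uniform positive constant if $c_0$ is taken small enough. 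Taking the infimum over $(u,\chi)$ and absorbing constants into $C_1$ then yields $E_\epsilon(\bar{e})\ge r_1(\epsilon)$ for $\epsilon$ small enough; the degenerate case $\bar{e}\in K_3$ is trivial.

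For the upper bound I would exploit the $T_3$-geometry encoded in Lemma \ref{lem:BFJK94}. The key geometric input is that each auxiliary matrix $J_i$ is symmetrized rank-one connected both to $e^{(i)}$ and to each of the remaining $J_{i'}$, and moreover $e^{(i)}$, $J_i$ and a suitable $J_{i'}$ are collinear in matrix space, so that $J_i$ admits a nontrivial convex decomposition $J_i=\alpha\,e^{(i)}+(1-\alpha)\,J_{i'}$. This permits me to replace any $J_i$-phase at a given scale by a fine laminate of $e^{(i)}$ and $J_{i'}$, using the gauge freedom in $\operatorname{skew}(3)$ to realise the symmetrized rank-one connection by a genuine affine displacement. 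Iterating this replacement for $N$ levels at length scales $l_1>l_2>\cdots>l_N$, and terminating by approximating the residual $J$-phase at level $N$ by its nearest well, produces an admissible pair whose elastic energy is of order $\mu^N$ (for some $\mu\in(0,1)$ coming from the geometric decay of the $J$-phase volume) and whose surface contribution at level $k$ is of order $\mu^{k-1}/l_k$. Choosing a super-geometric schedule $l_{k+1}/l_k\sim \theta^k$, i.e.\ $l_k\sim \exp(-ck^2)$, bounds the total surface term by $\exp(CN^2)$, so $E_\epsilon(u,\chi)\lesssim \mu^N+\epsilon\exp(CN^2)$; balancing via $N\sim|\log\epsilon|^{1/2}$ then gives the scaling $\exp(-C|\log\epsilon|^{1/2})$ of $r_2(\epsilon)$.

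The main obstacle is the upper bound construction. One must set up the three-dimensional self-similar lamination so that all symmetrized rank-one compatibilities are simultaneously respected across nested scales, the gauge selections in $\operatorname{skew}(3)$ glue consistently at all interfaces, and the final configuration can be periodized on $\T^3$ via transition/buffer layers of strictly subleading energetic cost. The super-geometric refinement $l_k\sim\exp(-ck^2)$ is essential: a purely geometric choice $l_k\sim \rho^k$ would only give polynomial scaling, not the subalgebraic behaviour claimed by $r_2$. Once the geometric skeleton of the construction is in place, the energy accounting reduces to an elementary optimization in $N$, and the lower bound is almost automatic from Theorem \ref{thm:rigidity}.
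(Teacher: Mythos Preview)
Your lower bound is correct and essentially the same as the paper's; the paper avoids the case distinction by writing directly $\|\chi-\bar e\|_{L^2}^2\le 2E_{el}(u,\chi)+2\|e(u)-\bar e\|_{L^2}^2$ and then applying Theorem~\ref{thm:rigidity} to the second term, but your version works equally well.

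The upper bound has a genuine gap in the energy accounting. When you replace a $J_i$-stripe of width $l_k$ by a fine laminate of $e^{(i)}$ and $J_{i'}$ at scale $l_{k+1}$, the resulting displacement only matches the surrounding affine $J_i$-displacement in an averaged sense on the stripe boundary; you need interior transition layers of width comparable to $l_{k+1}$ along each stripe boundary to restore compatibility. These layers contribute an elastic error of order $\mu^{k-1}\,l_{k+1}/l_k$ at level $k$, so the correct elastic estimate is
\[
E_{el}\lesssim \mu^{N}+\sum_{k=1}^{N-1}\mu^{k-1}\,\frac{l_{k+1}}{l_k},
\]
not just $\mu^N$. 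With your super-geometric schedule $l_k\sim\exp(-ck^2)$ and fixed $c$, the $k=1$ term of this sum is $\exp(-3c)$, a positive constant, and the upper bound does not tend to zero. Conversely, your assertion that a geometric choice $l_k=\rho^k$ ``would only give polynomial scaling'' is based on the same omission: once the transition cost is included, the geometric choice with ratio $\rho$ gives elastic energy $\mu^N+C\rho$ and surface energy $\rho^{-N}$, and the paper takes $\rho=\epsilon^{1/(N+1)}$ depending on $\epsilon$, so that
\[
E_\epsilon\lesssim \mu^N+\epsilon^{1/(N+1)},
\]
after which $N\sim|\log\epsilon|^{1/2}$ yields exactly $r_2(\epsilon)$. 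So the essential point you are missing is not the scale schedule but the interior cut-off cost; once that is accounted for, a geometric schedule with $\epsilon$-dependent ratio is the right choice, and no super-geometric refinement is needed.
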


As indicated above and in parallel to the setting for the Tartar square, the slow decay behaviour in $\epsilon \downarrow 0$ quantifies the high complexity of the infinite order laminates forming the macroscopically visible microstructure in the $T_3$-structures from above.

\begin{rmk}[Dirichlet vs periodic boundary data for the symmetrized gradient]
\label{rmk:Dirichlet}
We remark that variants of Theorems \ref{thm:rigidity} and \ref{thm:scaling} remain valid in general bounded Lipschitz domains in which affine Dirichlet data are prescribed and with symmetrized gradient in a suitable subset of $\conv\{J_1,J_2,J_3\}\cup \bigcup\limits_{j=1}^3 [J_j, e^{(j)}]$. We outline the changes and the slight restriction of the possible set of boundary data in Section \ref{sec:Dirichlet}.
\end{rmk}

\subsection{Main challenges and ideas associated with the proofs of Theorems \ref{thm:rigidity0}-\ref{thm:scaling}.}
Let us comment on the main ideas and challenges associated with deducing the results of Theorems \ref{thm:rigidity0}-\ref{thm:scaling}. 

While inspired by the recent articles \cite{RT22, RT22a, RT23} the present
setting is (even on an algebraic level) substantially more complicated due to the presence of the gauge group $\mbox{skew}(3)$. This is
directly reflected in the structure of stress-free strains of diagonal form: By
the results of \cite{DM2}, under a constant trace constraint, these can be
characterized as the sum of six planar waves (see Section \ref{sec:struc}). In
contrast to the setting of the Tartar square, it is thus \emph{not} immediately
obvious that the $T_3$-structure from \eqref{eq:BFJK} is rigid in the sense that
only affine solutions exist to the differential inclusion. More precisely, a key
step in the proof of Theorem \ref{thm:rigidity0} consists in excluding possibly
complex interaction of the six planar waves into which any diagonal, constant
trace, exactly stress-free deformation can be decomposed and which could -- a priori -- result in complex microstructures. As in the Tartar setting, the ellipticity condition \eqref{eq:elliptic} implies a ``determinedness'' property, i.e., the fact that each diagonal component of one of the matrices \eqref{eq:BFJK} determines all other components. However, we emphasize that as proved by the counterexample of Kirchheim-Preiss \cite{K1,P10,FS17}, in general, ellipticity alone may not suffice to exclude complicated microstructures. In particular, while our result provides a first rigidity result for diagonal $T_3$-structures for the symmetrized gradient, it, for instance, does not immediately imply an analogous result for other, non-diagonal $T_3$-structures (such as the ones in \cite{CS13}) in $\R^{3\times 3}_{sym}$ for the symmetrized gradient. We refer to the discussion in Section \ref{sec:lit} below. We further highlight that the example from \eqref{eq:BFJK} provides a  minimal example for a $T_3$-structure for the symmetrized gradient. As is probably well-known to experts and as recalled in Section \ref{sec:2D} below, in two-dimensional, symmetric matrices such structures do \emph{not} exist. The $T_3$-structure from \cite{BFJK94} and \eqref{eq:BFJK} hence plays a central role in the further study of $T_3$-structures for the symmetrized gradient.

The outlined difficulties in the derivation of the rigidity result of Theorem \ref{thm:rigidity0} are also directly reflected in the proof of the quantitative statements of Theorems \ref{thm:rigidity}, \ref{thm:scaling}.
We emphasize that while there are seminal results on quantitative rigidity
properties for three-dimensional symmetrized elasticity (see \cite{CO, CO1}), new
difficulties emerge in the context of quantifying the rigidity properties of the $T_3$-structure from above. In \cite{RT23}, which was strongly inspired by the methods from \cite{CO1}, these methods could still be adapted to deduce scaling laws in the presence of Dirichlet data consisting of second order laminates. In a sense, we view the methods from \cite{RT23} (building on \cite{CO, CO1}) as tailored towards second order laminates. Contrary to the setting of \cite{RT23} in the setting of the $T_3$-structure we expect the presence of laminates of \emph{infinite order} and all quantitative arguments have to reflect this.
Hence, guided by the argument for Theorem \ref{thm:rigidity0}, we mimic the qualitative argument as far as possible and exploit the ellipticity and ``determinedness structure'' of the wells. This is strongly inspired by the work \cite{RT22}. However, due to the presence of the gauge group $\mbox{skew}(3)$ and, as a consequence, the six planar waves in the characterization of exactly stress-free strains, a ``naive'' version of the argument of \cite{RT22} does not result in the desired iterative gain of control of the wave decomposition.
Thus, instead, we carefully extract the most relevant directions in the plane
wave decomposition and only then apply the determinedness result (in parallel to
the stress-free setting). In a precise sense, this breaks the symmetries of the problem (consisting of all six possible waves). Only after having isolated the central relevant waves it is then possible to pursue the outlined iterative support improvement strategy. Such a breaking of symmetry property (together with the determinedness property) constitutes part of the nonlinear argument. In a qualitative context this has first been introduced in \cite{DM2} and has subsequently been used in different situations involving laminates of order at most two in \cite{CO, CO1, R16, RS23}.
We expect that such a strategy is very robust and can also be applied in settings in which the energy wells involve an arbitrarily large number of finite laminates, similarly as in \cite{RT22a}.
We plan to explore this in future work.

\subsection{Main results on the numerical simulation of the microstructure}
\label{sec:numerics}
In order to compare the above results with experimental settings, we have also carried out numerical simulations of possible microstructures emerging at low energy.
To this end, we focused on the minimization problem 
\begin{align}\label{eq:Emin}
E_{\epsilon}(\bar{e}) := \inf\limits_{\chi \in BV(\Omega, \{e^{(1)}, e^{(2)}, e^{(3)}\})} \inf\limits_{u \in \mathcal{A}_{\bar{e}}} E_{\epsilon}(u,\chi),
\end{align}
from Theorem \ref{thm:scaling} with the two specifically chosen average strain conditions
\begin{align*}
\bar{e}:= J_1:= \diag(\eta_1, \eta_1, \kappa)
\end{align*} 
and
\begin{align*}
\bar{e}:=B:= \diag\left( \frac{2}{3} \eta_1 + \frac{1}{3} \kappa,  \frac{2}{3} \eta_1 + \frac{1}{3} \kappa,  \frac{2}{3} \eta_1 + \frac{1}{3} \kappa \right)=\tfrac{1}{3}J_1 + \tfrac{1}{3}J_2+\tfrac{1}{3}J_3.
\end{align*}
For the simulations we made the specific choices $\eta_1 = 0.03$, $\eta_2 = 0.01$, $\eta_3 = 0.06$ and thus, in particular, $\kappa = 0.04$. We note that these satisfy the ellipticity conditions, so that we are thus in a setting in which the results from the previous sections are valid.
\begin{figure}[t]
	\centering
    \includegraphics[width=0.9\textwidth]{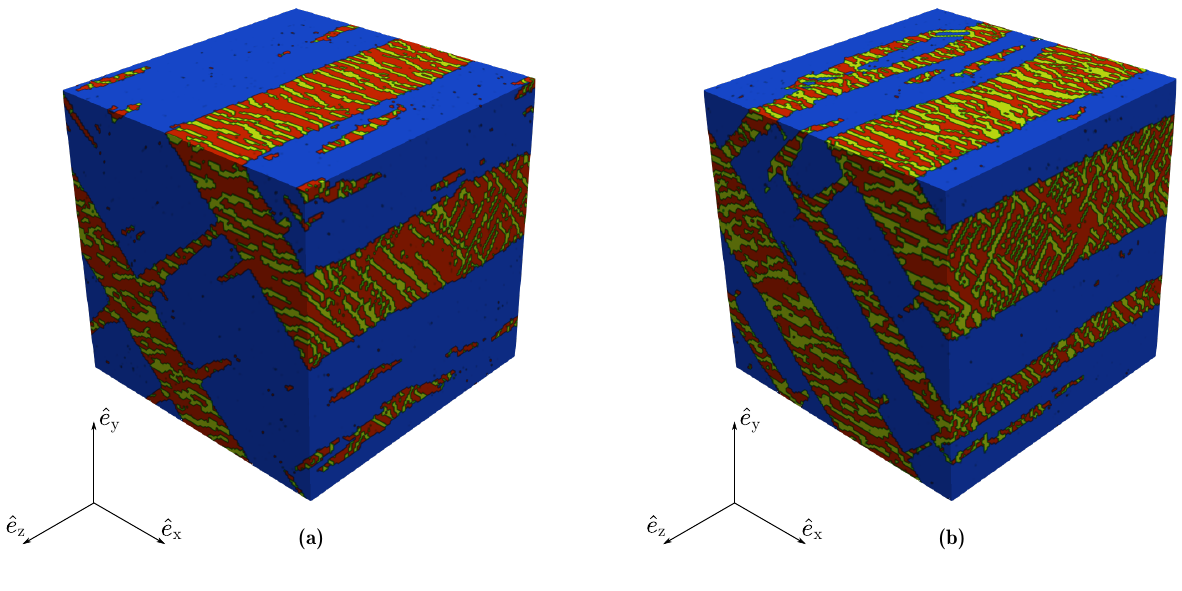}
    \caption{Numerical results of $T_3$-microstructures computed in a spatially resolved framework over a representative volume element by imposing macroscopic strains: (a) $\bar{e}=J_1$ and (b) $\bar{e}=B$. The colors, blue, red, and yellow, indicate the corresponding fraction of each phase, $\chi_1$, $\chi_2$, and $\chi_3$, respectively.}
	\label{fig:numRes4MatrixJandB}
\end{figure}

\begin{figure}[t]
	\centering
    \includegraphics[width=0.5\textwidth]{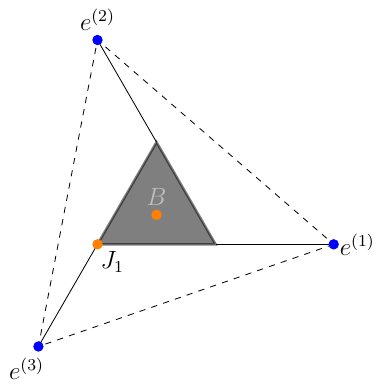}
    \caption{A schematic illustration of the boundary conditions $J_1$ and $B$
      used in Figure \ref{fig:numRes4MatrixJandB}.}
	\label{fig:JandB}
\end{figure}

Instead of minimizing \eqref{eq:Emin} directly, our numerical simulations solve the linear elastic equations of static equilibrium, linear momentum balance, over a periodic domain $\Omega$, utilizing a Fourier-based spectral discretization of the displacement field in three dimensions \cite{Moulinec1998}. By introducing the fractions of the three wells in a continuous fashion but penalizing those to remain close to $0$ or $1$, we can associate each material point in the equilibrated configuration with one of the three wells, while locally admitting small, elastic perturbations around each well \cite{KumarEtAl2020}. Although considerable numerical expenses in three dimensions prevent us from going to high spatial resolution, the chosen resolution is sufficient to observe the emergence of complex microstructures. (Details on the numerical scheme are provided in Part~\ref{part:3}.)

The numerical results, which illustrate the distribution of the three variants within the periodic unit cell, indeed reflect the typical $T_N$ behaviour. We obtain bands of laminates within laminates, see Figure~\ref{fig:numRes4MatrixJandB}. In the simulations these are observed up to second order. Indeed, as in our construction from Section \ref{sec:upper} in both final states, we observe a first, outer laminate (in blue) and a second, inner laminate (with inner bands of red-yellow). The associated lamination directions match the predicted ones from the upper bound construction in Section~\ref{sec:upper}. We expect that the further anticipated higher-order laminates in the simulations are suppressed 
by the limited spatial resolution (a consequence of the computational costs in these 3D simulations, see Remark \ref{rmk:exp}).

\subsection{Relation to the literature}
\label{sec:lit}
In concluding the introduction, we highlight further connections of our results to the literature. Firstly, we point out that the $T_3$-structure in \eqref{eq:BFJK} does \emph{not} constitute the only possible $T_3$-structure in $\R^{3\times 3}_{sym}$. As pointed out in \cite{BFJK94} a perturbation argument allows to construct $T_3$-structures associated with the cubic-to-monoclinic phase transformation from it (and even ones with full $SO(3)$ invariance). Such structures have also been found and have been systematically studied in \cite{CS13, CS15}. In general, these structures cannot be immediately reduced to each other since, for instance, the structures in \cite{CS13} are not simultaneously diagonalizable. A full classification of $T_3$-structures for the symmetrized gradient in $\R^{3\times 3}_{sym}$ remains an open problem.

Secondly, we emphasize that the generalization of the $m$-well problem to more general differential inclusions has been studied recently. We refer to \cite{GN04, PP04} for problems for the divergence operator, to \cite{TS23} for the study of flexibility of exact solutions associated with $T_4$-structures for general linear differential operators and to \cite{DPR20,RRT22} for generalized two-well problems and a quantitative analysis of the $T_3$-structure from \cite{GN04, PP04}. While \cite{GN04, PP04,RRT22} also deal with $T_3$-structures in three dimensions (with the divergence operator as its annihilator), we stress the major difference between the present article and those results: With a first order operator as its annihilator the examples from \cite{GN04, PP04,RRT22} are still substantially closer to the original Tartar square. For instance, all compatibility conditions involve first order operators and the decomposition of the stress-free states is much less involved. The differential inclusion for the symmetrized gradient however involves a \emph{second order operator} as its annihilator, which already on the algebraic level leads to substantial complications. We refer to \cite{RRT22,RRTT23} for a systematic overview on the role of the structure of the annihilator of the differential inclusion.

Finally, we point out further results on the (quantitative) dichotomy between
rigidity and flexibility and scaling laws used for the study of the complexity
of microstructures in shape-memory alloys and related models. For a
non-exhaustive list, we refer to \cite{BM23, CO1,CO, KKO13,RTZ19, RZZ16, RT22, GRTZ24,GZ24,
  RT22, RT22a, CDPRZZ20, CKZ17, KW14, KW16, PW22, DPR20, KLLR19a, KLLR19b, C99, W97,R22, DM1, DM2, RS23, S21,S21a} and the references therein.

\subsection{Outline of the article}
The remainder of this note is organized as follows: Part \ref{part:1} deals with the qualitative rigidity properties of the differential inclusion \eqref{eq:BFJK}. More precisely, in Section \ref{sec:pre} we recall the notions of compatibility and the definition of a $T_3$-structure. Next, in Section \ref{sec:2D} we
give a proof of the absence of two-dimensional $T_3$-structures
and in Section \ref{sec:BFJK} we present the proof of Theorem \ref{thm:rigidity0}. 

Part \ref{part:2} considers the energetically quantified setting and derives the results in Theorems \ref{thm:rigidity}, \ref{thm:scaling}. This is split into three main sections: In Section \ref{sec:pre2}, we prove and recall central energy estimates for the energies introduced above. Building on this, in Section \ref{sec:lower} we devise an iterative procedure of studying the phase-space contributions in which the elastic energy is not coercive. Here we exploit the elliptic structure of the wells. Based on this, we deduce Theorems \ref{thm:rigidity} and \ref{thm:scaling} in Section \ref{sec:thms_proof}. In Section \ref{sec:upper} we complement this by presenting the arguments for the upper bound in Theorem \ref{thm:scaling}.

Last but not least, in Part \ref{part:3} we discuss the numerical simulations of the microstructures associated with the $T_3$ differential inclusion \eqref{eq:BFJK}.

\part{Qualitative Results}
\label{part:1}

\section{Preliminaries and main definitions}
\label{sec:pre}

In this section, we recall a number of notions which will be convenient in what
follows. If not specified otherwise, in our qualitative discussion we will, in particular, always consider
$\Omega\subset\R^d$ a simply connected and open set with Lipschitz boundary.

\subsection{Strain compatibility and \texorpdfstring{$T_3$}{T3}-structures}

We first recall the concept of \emph{strain compatibility} of two symmetric matrices.
\begin{defi}\label{def:comp}
We say that two distinct matrices $A, B\in\Sd$ are \emph{(strain) compatible} if there exist two vectors $a,b\in\R^d\setminus \{0\}$ such that
\begin{align}
  \label{eq:comp}
A-B=\frac{1}{2}(a\otimes b+b\otimes a).
\end{align}
Otherwise we say that they are \emph{incompatible}.
The directions parallel to $a$ and $b$ above are called \emph{twin directions}.
\end{defi}

In order to introduce $T_3$-structures we use the definition given in \cite{CS13} generalized to all $d\times d$ symmetric matrices and without the trace constraint from \cite{CS13} (see also \cite{BFJK94}).
\begin{defi}[Definition 4.1 \cite{CS13}]\label{def:T3}
Three matrices $A_1, A_2, A_3\in\Sd$ form a $T_3$\emph{-structure} (for the symmetrized gradient) if
\begin{itemize}
\item[(i)] they are pairwise incompatible,
\item[(ii)] there exist $\tilde{J}_1,\tilde{J}_2,\tilde{J}_3\in\Sd$ and $\lambda_1,\lambda_2,\lambda_3\in(0,1)$ such that
\[
\tilde{J}_1=\lambda_1 A_2+(1-\lambda_1)A_3, \quad
\tilde{J}_2=\lambda_2 A_3+(1-\lambda_2)A_1, \quad
\tilde{J}_3=\lambda_3 A_1+(1-\lambda_3)A_2,
\]
and $\tilde{J}_j$ and $A_j$ are compatible, $j=1,2,3$.
\end{itemize}
\end{defi}
The matrices $\tilde{J}_{j}$ and their relation to $e^{(j)}$ and $J_j$ are shown
in Figure \ref{fig:Jtilde}.

\begin{figure}
  \centering
  \includegraphics[width=0.7\linewidth]{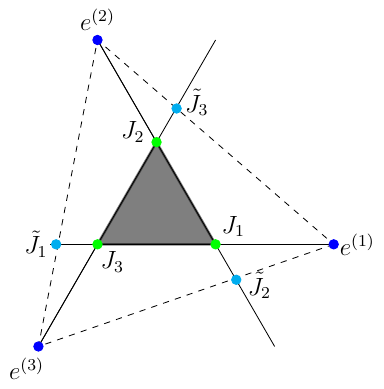}
  \caption{A schematic illustration of a $T_3$-structure. The matrices
    $\tilde{J}_j$ of Definition \ref{def:T3} are also symmetrized rank-one
    connected to the matrices $J_{i}$ of Figure \ref{fig:T3}.}
     \label{fig:Jtilde}
\end{figure}

\begin{rmk}
It has been shown in \cite{BFJK94} that, regardless of the dimension, there are
no $T_3$-structures for gradients, that is if the notion of (strain)
compatibility  is replaced by rank-1-compatibility in Definition \ref{def:T3}.
\end{rmk}

As in the setting of gradients, the notion of $T_3$-structures for the
symmetrized gradient can be extended to the notion of a more general $T_N$-structure. We refer to \cite{KMS03} for  a discussion of this extension for the gradient.

\section{Absence of \texorpdfstring{$T_3$}{T3}-structures in two dimensions}
\label{sec:2D}

In order to identify the ``simplest'' interesting situation in which $T_3$-structures for the symmetrized gradient may occur, we first recall that 
there do not exist $T_3$-structures in $\R^{2\times 2}_{sym}$.

\begin{prop}
\label{prop:2d}
There exist no $T_3$-structures for the symmetrized gradient in $\Stw$.
\end{prop}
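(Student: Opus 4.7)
The plan is to reduce strain compatibility in $\Stw$ to a determinant sign condition and then exploit convexity of the cone of positive definite matrices. The key observation I would establish first is that, for distinct $A, B \in \Stw$, the matrices $A, B$ are compatible in the sense of Definition \ref{def:comp} if and only if $\det(A-B) \leq 0$. The ``only if'' direction is a direct $2\times 2$ computation: for any $a, b \in \R^2$,
\[
\det\!\left(\tfrac{1}{2}(a\otimes b + b\otimes a)\right) = -\tfrac{1}{4}(a_1 b_2 - a_2 b_1)^2 \leq 0.
\]
For the converse I would diagonalize $A - B$ in an orthonormal eigenbasis $\{e_1, e_2\}$ with real eigenvalues $\mu_1, \mu_2$; the condition $\mu_1 \mu_2 \leq 0$ makes it possible to write $A - B = \tfrac{1}{2}(a\otimes b + b\otimes a)$ by choosing $a,b$ as explicit linear combinations of $e_1, e_2$ with coefficients $\pm\sqrt{|\mu_j|}$ (with an obvious adjustment if one eigenvalue vanishes, using that $A \neq B$).

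With this criterion in hand, pairwise incompatibility of $A_1, A_2, A_3$ means that each $A_i - A_j$ is strictly (positive or negative) definite. Since the trace of a definite matrix is nonzero with matching sign, the three scalars $\tr(A_j)$ are pairwise distinct, so after relabelling (which preserves the $T_3$ structure) I may assume $\tr(A_1) < \tr(A_2) < \tr(A_3)$; then both $A_2 - A_1$ and $A_3 - A_1$ are positive definite. Arguing by contradiction, suppose the $A_j$ form a $T_3$-structure and let $\tilde J_1 = \lambda_1 A_2 + (1-\lambda_1) A_3$ with $\lambda_1 \in (0,1)$ be the associated auxiliary matrix. Then
\[
\tilde J_1 - A_1 = \lambda_1 (A_2 - A_1) + (1-\lambda_1)(A_3 - A_1),
\]
which is a strict convex combination of positive-definite matrices. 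As the positive-definite cone is convex, $\tilde J_1 - A_1$ is itself positive definite, so $\det(\tilde J_1 - A_1) > 0$. By the criterion above, $\tilde J_1$ and $A_1$ are incompatible, contradicting condition (ii) of Definition \ref{def:T3}.

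I do not anticipate a genuine obstacle. The only place requiring care is the surjectivity in the converse direction of the compatibility criterion, but this is standard $2\times 2$ spectral theory; everything else reduces to the two elementary facts that definite $2\times 2$ matrices have trace of matching sign and that the cone of positive-definite matrices is convex. The essential dimension-two input is precisely that a $2\times 2$ symmetric matrix has only two eigenvalues, so the condition $\det > 0$ already forces definiteness, a rigidity which is no longer available in $\Sth$ and which is ultimately what rules out $T_3$-structures in the planar symmetric setting.
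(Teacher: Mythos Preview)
Your proposal is correct and follows essentially the same route as the paper: both arguments hinge on the criterion $\det(A-B)\le 0$ for compatibility (the paper's Lemma \ref{lem:2x2comp}), the observation that incompatibility forces definiteness and hence a strict trace ordering, and then convexity of the positive-definite cone to conclude that the relevant $\tilde J_j$ is incompatible with $A_j$. Your presentation is in fact slightly more streamlined, since by ordering the traces first you avoid the paper's case distinction and its explicit $\R^3$ parametrization of $\Stw$.
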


We emphasize that this result is already present in \cite[Theorem 1.4(a) and Theorem 2.3]{BFJK94} in the more general context for Young measures and that we do not claim any novelty for this result. We only present a short self-contained argument for the rigidity result of Proposition \ref{prop:2d} for the exact differential inclusion for completeness.

We begin by further recalling that
the compatibility condition for two symmetric matrices in two dimensions reduces to the non-positivity (as bilinear forms) of their difference.
This fact is well-known in the literature, see for instance \cite[Lemma
  4.1]{K91} or \cite[Lemma 1.2]{BFJK94} and the references therein.
Here we give an elementary proof for the sake of completeness.

\begin{lem}\label{lem:2x2comp}
Given two matrices $A,B\in\Stw$, they are compatible if and only if $\det(A-B)\le0$
\end{lem}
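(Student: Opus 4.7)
The plan is to set $C := A - B \in \Stw$ (which is nonzero by the distinctness clause built into Definition~\ref{def:comp}) and to prove both implications by short, direct algebraic computations on $C$.

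For the ``only if'' direction, I would just expand the symmetrized tensor product. With $a=(a_1,a_2)$ and $b=(b_1,b_2)$, the entries of $\tfrac12(a\otimes b + b\otimes a)$ are $C_{11}=a_1 b_1$, $C_{22}=a_2 b_2$, and $C_{12}=\tfrac12(a_1 b_2 + a_2 b_1)$, so that a single line of algebra yields
\[
\det C \;=\; a_1 a_2 b_1 b_2 - \tfrac14 (a_1 b_2 + a_2 b_1)^2 \;=\; -\tfrac14 (a_1 b_2 - a_2 b_1)^2 \;\le\; 0.
\]

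For the converse, I would diagonalize $C$ via the spectral theorem, writing $C = \lambda_1\, v_1 \otimes v_1 + \lambda_2\, v_2 \otimes v_2$ in an orthonormal eigenbasis, and order the eigenvalues so that $\lambda_1 \ge 0 \ge \lambda_2$; this is consistent with $\lambda_1 \lambda_2 = \det C \le 0$. If both eigenvalues are strictly of opposite sign, the natural ansatz
\[
a := \sqrt{\lambda_1}\, v_1 + \sqrt{-\lambda_2}\, v_2, \qquad b := \sqrt{\lambda_1}\, v_1 - \sqrt{-\lambda_2}\, v_2
\]
gives $\tfrac12(a \otimes b + b \otimes a) = C$ by the same expansion as above (in the eigenbasis, the diagonal entries read off $\lambda_1$ and $\lambda_2$, and the cross term $\tfrac12(a_1 b_2 + a_2 b_1)$ cancels by design), and both vectors are nonzero. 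In the remaining rank-one case $\det C = 0$ with $C \ne 0$, exactly one eigenvalue vanishes and $C = \lambda_1\, v_1 \otimes v_1$; I would then simply take $a := v_1$ and $b := \lambda_1 v_1$.

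I do not anticipate a real obstacle here, since the argument reduces to $2\times 2$ linear algebra. The only point requiring some care is the degenerate rank-one case, where the ``generic'' ansatz degenerates to the zero vector and one has to fall back on the separate explicit rank-one construction above; this is also the unique place in which the distinctness $A \ne B$ (and hence $C \ne 0$) enters the argument.
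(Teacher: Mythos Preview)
Your proof is correct and actually a bit cleaner than the paper's. The paper works entirely in the standard basis: it first disposes of the case where a diagonal entry of $C=A-B$ vanishes by an explicit choice of $a,b$, and for $c_{11},c_{22}\neq 0$ it writes out the system $c_{11}=a_1b_1$, $c_{22}=a_2b_2$, $2c_{12}=a_1b_2+a_2b_1$, normalizes $a_1=1$ by homogeneity, and solves the resulting quadratic, observing that real roots exist exactly when $c_{12}^2-c_{11}c_{22}\ge 0$. Your route via the spectral theorem replaces this computation by a change of basis that diagonalizes $C$, after which the ansatz $a=\sqrt{\lambda_1}\,v_1+\sqrt{-\lambda_2}\,v_2$, $b=\sqrt{\lambda_1}\,v_1-\sqrt{-\lambda_2}\,v_2$ is immediate. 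Your ``only if'' direction, via $\det C=-\tfrac14(a_1b_2-a_2b_1)^2$, is also more direct than the paper's, which instead reads off $\det C\le 0$ from the solvability condition of the system. The tradeoff is that the paper's argument is entirely self-contained linear algebra with explicit formulas in the original coordinates, while yours invokes the spectral theorem but yields a coordinate-free construction; both cover the same degenerate cases (the paper's ``a diagonal entry vanishes'' roughly corresponds to your rank-one case after rotation).
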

\begin{proof}
Let $C=A-B$ with
\[
C=\begin{pmatrix}c_{11}&c_{12}\\c_{12}&c_{22}\end{pmatrix}.
\]
If (at least) one diagonal entry of $C$ is zero the statement is true by taking for instance
\[
a=\begin{cases}(0,1)& c_{11}=0,\\(c_{11},2c_{12})& c_{11}\neq0,\end{cases}
\quad \text{and} \quad
b=\begin{cases}(2c_{12},c_{22})& c_{11}=0,\\(1,0)& c_{11}\neq0,\end{cases}
\]
with $a,b\in\R^2$ as in \eqref{eq:comp}.
Assume now that $c_{11},c_{22}\neq0$.
The compatibility condition \eqref{eq:comp} consists in the system
\[
\begin{cases}
c_{11}=a_1b_1,\\
c_{22}=a_2b_2,\\
2c_{12}=a_1b_2+a_2b_1,
\end{cases}
\]
that admits solutions if and only if $\det(C)=c_{11}c_{22}- c_{12}^2\leq0$.
Indeed, suppose for the moment that a solution exists. Then by homogeneity and since $c_{11}\neq 0$, we may without loss
  generality assume that $a_1=1$. By the first equation, it immediately
  follows that $b_1=c_{11}$. Thus, the system reduces to an algebraic problem
  for $a_2,b_2$:
  \begin{align*}
    c_{22}&=a_2b_2,\\
2c_{12}&=b_2+a_2c_{11}.
  \end{align*}
  Further taking differences we obtain that
  \begin{align*}
    c_{22}- 2c_{12}a_{2}&= a_2^2 c_{11} \Leftrightarrow a_2  = \frac{c_{12}\pm \sqrt{c_{12}^2-c_{11}c_{22}}}{c_{11}}, \\
    c_{11}c_{22}- 2 c_{12} b_2 &= -b_2^2\Leftrightarrow b_2 =c_{12} \mp \sqrt{c_{12}^2-c_{11}c_{22}},
  \end{align*}
  for some choice of signs $\pm$, $\mp$.
  These candidates are real valued if and only if
  $c_{11}c_{22}-c_{12}^2\leq 0$.
  Conversely, defining $a_2, b_2$ by the above formulas (with the above matching
  choice of $\mp$ and
  $\pm$) and $a_{1}=1, b_{1}=c_{11}$, we immediately verify that this is indeed
  a solution of the system.
\end{proof}

We next recall the usual identification between $\Stw$ and $\R^3$ via the map
\[
\Stw\ni\begin{pmatrix}a&c\\c&b\end{pmatrix}\mapsto\Big(\frac{a-b}{2},c,\frac{a+b}{2}\Big)\in\R^3.
\]
Equivalently, every $F\in\Stw$ can be parametrized as
\[
F=\begin{pmatrix}z+x&y\\y&z-x\end{pmatrix}, \quad \text{for a unique }(x,y,z)\in\R^3,
\]
see, e.g., \cite{P10, K1}.
Notice that lines in $\Stw$ correspond to lines in $\R^3$.
Note also that with the above parametrization we have $\det(F)=z^2-x^2-y^2$ and $\tr(F)=2z$.

\begin{proof}[Proof of Proposition \ref{prop:2d}]
We show that no triple $A_1,A_2,A_3\in\Stw$ of pairwise incompatible matrices can form a $T_3$-structure  as in Definition \ref{def:T3}.
Lemma \ref{lem:2x2comp} yields that, given a matrix $A\in\Stw$ of coordinates $(x_0,y_0,z_0)$, the set of symmetric matrices that are compatible with $A$ consists of the following cone
\[
\mathcal{C}(A):=\Big\{B=\begin{pmatrix}z+x&y\\y&z-x\end{pmatrix} \,:\, (z-z_0)^2\le(x-x_0)^2+(y-y_0)^2\Big\}.
\]
Hence, $A_2,A_3\in\Stw\setminus\mathcal{C}(A_1)$.
The cone $\Stw\setminus\mathcal{C}(A_1)$ consists of two connected components
\begin{align*}
E_+(A_1)&:=(\Stw\setminus\mathcal{C}(A_1))\cap\{B\in\Stw \,:\, \tr(B-A_1)>0\},\\
E_-(A_1)&:=(\Stw\setminus\mathcal{C}(A_1))\cap\{B\in\Stw \,:\, \tr(B-A_1)<0\}.
\end{align*}
If $A_2$ and $A_3$ both belong to $E_+(A_1)$ ($E_-(A_1)$ resp.), then
by convexity of $E_{\pm}(A_1)$ (and recalling that the parametrization used sends lines to lines) any $\tilde J_1$ as in point (ii) of Definition \ref{def:T3} belongs to $E_+(A_1)$ ($E_-(A_1)$ resp.) as well.
So $\tilde J_1$ is incompatible with $A_1$ and the triple $A_1,A_2,A_3$ does not form a $T_3$-structure.
Therefore, without loss of generality, we can assume that $A_2\in E_+(A_1)$ and $A_3\in E_-(A_1)$.
Analogously, we also have $A_1,A_3\in\Stw\setminus\mathcal{C}(A_2)$ but $\tr(A_1-A_2)\le0$ and $\tr(A_3-A_2)=\tr(A_3-A_1)+\tr(A_1-A_2)\le0$.
This implies that $A_1,A_3\in E_-(A_2)$ and thus there exists no $\tilde{J}_2$ as in point (ii) of Definition \ref{def:T3}.
\end{proof}

\section{On rigidity of the example from \texorpdfstring{\cite{BFJK94}}{BFJK94a}}
\label{sec:BFJK}

In this section, we turn to the main qualitative result, i.e., the proof of Theorem
\ref{thm:rigidity0}. The argument will also play an important role
as a starting point for the quantified version of this which we discuss in Part \ref{part:2} below.

\subsection{The structure of three-dimensional diagonal strains}
\label{sec:struc}

Seeking to study the differential inclusion \eqref{eq:BFJK}, in the following we consider $d=3$ and let $\Omega\subset\R^3$ be a simply
  connected Lipschitz domain.
Following the arguments from \cite{DM2}, we first infer some structure on deformations with constant divergence and consisting of diagonal strains.
More precisely, \cite[Lemma 3.2]{DM2} provides a decomposition of diagonal strains with constant trace into six planar waves of the following form:

\begin{prop}[Lemma 3.2 in \cite{DM2}]\label{prop:structure-def}
Let $\Omega \subset \R^3$ be an open, simply connected domain.
Let $c\in\R$ and let $K(c)$ be a subset of $\{M\in \R^{3\times 3}_{diag}: \ \tr(M)=c\}$.
Let $u\in W^{1,\infty}(\Omega;\R^3)$ satisfy
\begin{equation}\label{eq:inclusion-3D}
e(u)\in K(c) \mbox{ a.e. in } \Omega.
\end{equation}
Then there exist functions $F_{ij}, G_{ij}:\R\to\R$ for $i,j\in\{1,2,3\}$,
$i\neq j$ and a matrix $M\in K(c)$, $M=\diag(m_1,m_2,m_3)$ such that there holds
\begin{equation}\label{eq:structure}
\begin{split}
u_1(x_1,x_2,x_3) &= F_{12}(x_1+x_2)+G_{12}(x_1-x_2)+F_{13}(x_1+x_3)+G_{13}(x_1-x_3)+m_1 x_1, \\
u_2(x_1,x_2,x_3) &= F_{21}(x_1+x_2)+G_{21}(x_1-x_2)+F_{23}(x_2+x_3)+G_{23}(x_2-x_3)+m_2 x_2, \\
u_3(x_1,x_2,x_3) &= F_{31}(x_1+x_3)+G_{31}(x_1-x_3)+F_{32}(x_2+x_3)+G_{32}(x_2-x_3)+m_3 x_3,
\end{split}
\end{equation}
and the functions $F_{ij}, G_{ij}$ comply with 
\begin{equation}\label{eq:Fcomp}
\begin{split}
F'_{ij}(z) &= -F'_{ji}(z), \quad \text{for every } z\in\{x_i+x_j:\  x\in\Omega\}, \\
G'_{ij}(z) &= G'_{ji}(z), \quad \text{for every } z\in\{x_i-x_j: \ x\in\Omega\}.
\end{split}
\end{equation}
\end{prop}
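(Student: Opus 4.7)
The plan is to reduce the structure from three variables to two and then to one, via successive differentiation. First, I would exploit the three off-diagonal conditions $\partial_i u_j + \partial_j u_i = 0$ for $i \neq j$ by cross-differentiation: differentiating the $(i,j)$-equation with respect to $x_k$ (for $k$ the remaining index) and taking suitable linear combinations of the three resulting identities cancels all second partials except one, yielding $\partial_i \partial_j u_k = 0$ for every triple of distinct indices. By simple connectedness of $\Omega$, this forces the splittings
\[
u_1 = A_1(x_1,x_2) + B_1(x_1,x_3),\quad u_2 = A_2(x_1,x_2) + B_2(x_2,x_3),\quad u_3 = A_3(x_1,x_3) + B_3(x_2,x_3).
\]

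Substituting this back, each off-diagonal condition becomes a planar symmetric-gradient compatibility confined to one of the three coordinate planes, e.g.\ $\partial_1 A_2 + \partial_2 A_1 = 0$. Substituting into the trace constraint $\tr(e(u))=c$ yields an identity of the form $\alpha(x_1,x_2) + \beta(x_1,x_3) + \gamma(x_2,x_3) = c$ with $\alpha := \partial_1 A_1 + \partial_2 A_2$, and analogously for $\beta, \gamma$. Applying the mixed second derivatives $\partial_1\partial_2$, $\partial_1\partial_3$, $\partial_2\partial_3$ to this identity forces each bracket to split as a sum of two single-variable functions, $\alpha = \alpha_1(x_1) + \alpha_2(x_2)$ and so on, and a final separation of variables yields the constants $\alpha_1 + \beta_1 = m_1$, $\alpha_2 + \gamma_2 = m_2$, $\beta_3 + \gamma_3 = m_3$ with $m_1+m_2+m_3 = c$.

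Each pairwise 2D system, e.g.\ $\partial_1 A_2 + \partial_2 A_1 = 0$ together with $\partial_1 A_1 + \partial_2 A_2 = \alpha_1(x_1) + \alpha_2(x_2)$, decouples in the light-cone coordinates $x_1 \pm x_2$: summing and subtracting the two equations shows that $A_1+A_2$ and $A_1-A_2$ satisfy transport equations along the respective characteristics, whose integration produces wave parts $F(x_1+x_2), G(x_1-x_2)$ together with primitives $\mathcal{A}_1(x_1), \mathcal{A}_2(x_2)$ of $\alpha_1, \alpha_2$. Assembling the three analogous pair decompositions, the non-wave part of $u_1 = A_1 + B_1$ collapses to $\mathcal{A}_1(x_1)+\mathcal{B}_1(x_1)$, whose derivative is the constant $\alpha_1 + \beta_1 = m_1$; hence this sum is affine in $x_1$, and its additive constant is absorbed into one of the waves, producing exactly the $m_1 x_1$ term in \eqref{eq:structure}. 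The relations \eqref{eq:Fcomp} then arise from the explicit form of the wave coefficients: the $(x_1+x_2)$-coefficient appears with opposite signs in $A_1$ and $A_2$ while the $(x_1-x_2)$-coefficient appears with equal signs, and likewise for the other two pairs.

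The main obstacle I expect is the bookkeeping of the single-variable ``gauge'' ambiguities produced by the transport integrations and the verification that the trace constraint is precisely sharp enough to force their pairwise combinations $\mathcal{A}_1+\mathcal{B}_1$ (and analogues) to be affine -- the individual primitives $\mathcal{A}_j$ need not be so, but they cancel against the primitives arising from the neighbouring 2D problem. A further subtlety is identifying the diagonal matrix $M = \diag(m_1,m_2,m_3)$ built from these separation constants as an element of $K(c)$, which goes beyond the identity $m_1 + m_2 + m_3 = c$ and requires exploiting the structural hypotheses on $K(c)$.
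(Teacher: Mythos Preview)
Your approach is correct and takes a genuinely different route from the paper's sketch. The paper does not prove the displacement-level statement \eqref{eq:structure} directly; instead it sketches the strain-level version \eqref{eq:strain-structure}. It invokes the Saint-Venant compatibility conditions $\partial_{jk} e_{ii} = 0$ and $\partial_{ii} e_{jj} + \partial_{jj} e_{ii} = 0$ for diagonal strains, obtains a preliminary splitting $e_{ii} = b_{ij}(x_i,x_j) + b_{ik}(x_i,x_k)$, and then combines the second-order compatibility with the trace constraint to deduce $(\partial_{ii} - \partial_{jj})\partial_{ij} b_{ij} = 0$. This yields plane waves plus leftover quadratic single-variable polynomials, which are subsequently absorbed into the waves via the identity $x_i^2 + x_j^2 = \tfrac12(x_i+x_j)^2 + \tfrac12(x_i-x_j)^2$. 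Your route instead works at the displacement level and exploits the off-diagonal vanishing $e_{ij}=0$ directly: the cross-differentiation replaces the first Saint-Venant block, and the light-cone integration replaces the wave-equation step. Your path is arguably more natural for the proposition as stated and avoids the explicit bookkeeping of quadratic corrections, while the paper's strain-level derivation aligns with how the result is actually used downstream (the proof of Theorem~\ref{thm:rigidity0} only needs \eqref{eq:strain-structure}).

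Your worry about $M \in K(c)$ is not a real obstacle. Your argument produces some diagonal $M$ with $\tr M = c$, and the residual gauge freedom lets you trade linear terms between $m_j x_j$ and the waves while preserving \eqref{eq:Fcomp}: replacing $F_{12}, G_{12}$ by $F_{12} + \tfrac{a}{2}(\cdot)$, $G_{12} + \tfrac{a}{2}(\cdot)$ and adjusting $F_{21}, G_{21}$ accordingly shifts $(m_1,m_2) \to (m_1 - a, m_2 + a)$ without touching anything else. Hence any diagonal matrix of trace $c$ can be realised, in particular any element of $K(c)$. The paper sidesteps this by fixing an arbitrary $M \in K(c)$ at the outset and reducing to $c = 0$.
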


\begin{rmk}[Structure of diagonal strains]
Under the hypotheses of Proposition \ref{prop:structure-def}, it is
straightforward to obtain the following structure at the level of the strains:
there exist six functions $f_{ij},g_{ij}:\R\to\R$ for $i,j\in\{1,2,3\}$,
$i<j$, and a matrix $M\in K(c)$, $M=\diag(m_1,m_2,m_3)$ such that there holds
\begin{equation}\label{eq:strain-structure}
\begin{split}
e_{11}(x_1,x_2,x_3) &= f_{12}(x_1+x_2)+g_{12}(x_1-x_2)-f_{13}(x_1+x_3)-g_{13}(x_1-x_3)+m_1,\\
e_{22}(x_1,x_2,x_3) &= -f_{12}(x_1+x_2)-g_{12}(x_1-x_2)+f_{23}(x_2+x_3)+g_{23}(x_2-x_3)+m_2,\\
e_{33}(x_1,x_2,x_3) &= f_{13}(x_1+x_3)+g_{13}(x_1-x_3)-f_{23}(x_2+x_3)-g_{23}(x_2-x_3)+m_3.
\end{split}
\end{equation}
\end{rmk}

For self-containedness, we present a short sketch of the proof of the decomposition on the level of the strain.

\begin{proof}[Proof of the decomposition from \eqref{eq:strain-structure}]
Let $M \in K(c)$, then by considering $u(x)-Mx$ and $K(0):=K(c)-M$ we may
  without loss of generality assume that $c=0$.
By linearity we can assume $u$ to be smooth.
Now the compatibility conditions for diagonal strains read
\begin{align}
\label{eq:comp1}
\begin{split}
\p_{23} e_{11} & = 0,\\
\p_{13} e_{22} & = 0,\\
\p_{12} e_{33} & = 0,
\end{split}
\end{align}
and 
\begin{align}
\label{eq:comp2}
\begin{split}
\p_{22} e_{11} + \p_{11} e_{22} & = 0,\\
\p_{33} e_{22} + \p_{22} e_{33} & = 0,\\
\p_{11} e_{33} + \p_{33} e_{11} & = 0.
\end{split}
\end{align}
Using first the equations in \eqref{eq:comp1}, we obtain that
\begin{align*}
e_{11}(x_1,x_2,x_3) & = b_{12}(x_1,x_2) + b_{13}(x_1,x_3),\\
e_{22}(x_1,x_2,x_3) & = b_{21}(x_1,x_2) + b_{23}(x_2,x_3),\\
e_{33}(x_1,x_2,x_3) & = b_{31}(x_1,x_3) + b_{32}(x_2,x_3).
\end{align*}
Using these dependences, the constant trace condition implies that $\p_{12} b_{12} + \p_{12} b_{21} = 0$.
Inserting this into the first equation in \eqref{eq:comp2} differentiated with respect to $x_1, x_2$, we obtain that
\begin{align*}
(\p_{11}-\p_{22})\p_{12} b_{12} = 0.
\end{align*}
Hence,
\begin{align*}
b_{12}(x_1, x_2) = f_{12,+}(x_1+x_2) + f_{12,-}(x_1-x_2) + b_{121}(x_1) + b_{122}(x_2),
\end{align*}
and 
\begin{align*}
b_{21}(x_1, x_2) = -f_{12,+}(x_1+x_2) - f_{12,-}(x_1-x_2) + b_{211}(x_1) + b_{212}(x_2),
\end{align*}
with some one-dimensional functions $b_{121}, b_{122}, b_{211}, b_{212}$.
Analogous results hold for the other functions in the decomposition above. 

It thus remains to argue that the functions of the single variables $x_1, x_2$ are constant. For $b_{122}(x_2), b_{211}(x_1)$ some further information follows from the first equation in \eqref{eq:comp2}. Indeed, it follows that
\begin{align}
\label{eq:quadratic}
b_{122}''(x_2) + b_{211}''(x_1) = 0,
\end{align}
which implies that both functions are second order polynomials (with leading order coefficient $a \in \R$ and $-a$, respectively). As a consequence, arguing similarly in all other variables, we obtain the decomposition
\begin{equation}\label{eq:struct1}
\begin{split}
e_{11}(x_1,x_2,x_3) & = f_{12,+}(x_1+x_2) + f_{12,-}(x_1-x_2) +f_{13,+}(x_1+x_3) + f_{13,-}(x_1-x_3)\\ & \quad +P_{12}(x_2)+ P_{13}(x_3)+ f_1(x_1),\\
e_{22}(x_1,x_2,x_3) & = -f_{12,+}(x_1+x_2) - f_{12,-}(x_1-x_2) +f_{23,+}(x_2+x_3) + f_{23,-}(x_2-x_3)\\ & \quad +P_{21}(x_1)+ P_{23}(x_3) + f_2(x_2),\\
e_{33}(x_1,x_2,x_3) & =- f_{13,+}(x_1+x_3) - f_{13,-}(x_1-x_3) -f_{23,+}(x_2+x_3) - f_{23,-}(x_2-x_3)\\& \quad +P_{31}(x_1)+ P_{32}(x_2)+ f_3(x_3),
\end{split}
\end{equation}
where $P_{ij}$ are second order polynomials complying with
\begin{equation}\label{eq:poly-coeff}
P_{ij}''=-P_{ji}'', \quad i,j\in\{1,2,3\},\, i\neq j.
\end{equation}
It remains to infer information on the functions $f_j(x_j)$ and to reduce the order of the polynomials to be constants. This again follows from the trace constraint which becomes
\begin{align*}
0 &= e_{11} + e_{22} + e_{33} \\
& = P_{12}(x_2)+ P_{13}(x_3)+P_{21}(x_1)+ P_{23}(x_3)+ P_{31}(x_1)+ P_{32}(x_2)+ f_1(x_1) + f_2(x_2) + f_3(x_3).
\end{align*}
Varying the individual variables, for every $j\in\{1,2,3\}$ we obtain that
\begin{equation}\label{eq:extra-term}
f_j(x_j) = -P_{ij}(x_j) - P_{kj}(x_j) + c_j,
\end{equation}
for some $c_j \in \R$ with $c_1+c_2+c_3=0$.
With this in hand, we claim that there exist functions $f_{ij},g_{ij}$ satisfying the structure equations in \eqref{eq:strain-structure}.
Indeed, it remains to argue that the quadratic polynomials in the structure of the strains $e_{jj}$ can be rewritten as functions in the variables $x_j\pm x_i$ and $x_j\pm x_k$.
For the sake of clarity of exposition we show this for $j=1$, while for $j=2,3$ the computations are completely analogous.
From \eqref{eq:struct1} and \eqref{eq:extra-term} we have
\begin{align*}
e_{11}(x_1,x_2,x_3) &= f_{12,+}(x_1+x_2) + f_{12,-}(x_1-x_2) +f_{13,+}(x_1+x_3) + f_{13,-}(x_1-x_3) \\
& \quad +P_{12}(x_2)+ P_{13}(x_3)-P_{21}(x_1)-P_{31}(x_1)+c_1.
\end{align*}
We first note that all linear contributions can be rewritten as $x_1 = \frac{1}{2}(x_1 + x_i) + \frac{1}{2}(x_1 - x_i)$, $i\in\{2,3\}$ and thus can be absorbed in the functions $f_{1i,\pm}$, obtaining
\begin{align*}
e_{11}(x_1,x_2,x_3) &= \tilde f_{12,+}(x_1+x_2) + \tilde f_{12,-}(x_1-x_2) +\tilde f_{13,+}(x_1+x_3) + \tilde f_{13,-}(x_1-x_3) \\
& \quad +P_{12}''x_2^2+ P_{13}''x_3^2-P_{21}''x_1^2-P_{31}''x_1^2,
\end{align*}
for some $\tilde f_{1i,\pm}$.
Eventually, from \eqref{eq:poly-coeff} we have that
$P_{1i}''=-P_{i1}''$, hence the quadratic contributions can be rewritten in the desired structure since $x_1^2 + x_i^2 = \frac{1}{2}(x_1+ x_i)^2 + \frac{1}{2}(x_1-x_i)^2$.
Defining $f_{1i}, g_{1i}$ such that these contributions are included then implies \eqref{eq:strain-structure}.
\end{proof}

\subsection{Proof of Theorem \texorpdfstring{\ref{thm:rigidity}}{3} -- Rigidity for the \texorpdfstring{\cite{BFJK94}}{BJFK94a} example}

We next continue by exploiting the wave structure from above, the three-valuedness of the strains and the fact that the strain components determine each other.

Before turning to the proof of Theorem \ref{thm:rigidity}, we discuss some auxiliary results.

\begin{lem}
\label{lem:determined}
Let $\Omega \subset \R^3$ be an open domain.
Let $e_{11}, e_{22}, e_{33}: \Omega \rightarrow \R$ be a solution of \eqref{eq:BFJK} and let \eqref{eq:elliptic} hold. Then, for any $x,y\in \Omega$ and $j,k\in\{1,2,3\}$, the difference
\begin{align*}
e_{jj}(x)-e_{jj}(y) \mbox{ uniquely determines } e_{kk}(x)-e_{kk}(y).
\end{align*}
\end{lem}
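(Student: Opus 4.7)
The statement is purely algebraic; I would approach it by exploiting the permutation structure of the wells together with the strict ellipticity in \eqref{eq:elliptic}. At every point $x\in\Omega$ the triple $(e_{11}(x),e_{22}(x),e_{33}(x))$ equals one of three rearrangements of the entries $(\eta_1,\eta_2,\eta_3)$, corresponding to the well $i(x)\in\{1,2,3\}$ realized at $x$. Since $\eta_2<\eta_1<\eta_3$ forces $\eta_1,\eta_2,\eta_3$ to be pairwise distinct, the value of any single diagonal entry $e_{jj}(x)$ already identifies the index $i(x)$ and thus fixes the other two diagonal entries at $x$. This will be the first observation of the proof.

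The claim then reduces to showing that the map sending the ordered pair $(i(x),i(y))$ to the real number $e_{jj}(x)-e_{jj}(y)$ is injective, up to the diagonal $i(x)=i(y)$ on which both $e_{jj}(x)-e_{jj}(y)$ and $e_{kk}(x)-e_{kk}(y)$ vanish. Setting $a:=\eta_3-\eta_1$ and $b:=\eta_1-\eta_2$, the possible values of $e_{jj}(x)-e_{jj}(y)$ with $i(x)\neq i(y)$ always lie in the six-element set $\{\pm a,\,\pm b,\,\pm(a+b)\}$, regardless of $j\in\{1,2,3\}$. By the first inequality in \eqref{eq:elliptic} both $a$ and $b$ are strictly positive, and the second inequality $\eta_2+\eta_3>2\eta_1$ is equivalent to $a\neq b$; together these make the six numbers pairwise distinct. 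Consequently $e_{jj}(x)-e_{jj}(y)$ uniquely identifies $(i(x),i(y))$, which in turn determines $e_{kk}(x)-e_{kk}(y)$ through the fixed permutation relating the diagonal components of $e^{(1)},e^{(2)},e^{(3)}$.

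The execution is a finite case check, and the only step requiring a little care is to verify that the enumeration of differences is indeed the same (up to sign) for each of the three choices of $j$. This is immediate from the cyclic-permutation relation between $e^{(1)},e^{(2)},e^{(3)}$: it suffices to carry out the verification for $j=1$ and then appeal to symmetry. I do not expect a genuine obstacle here; the crucial analytical input is simply the strict inequality $\eta_2+\eta_3>2\eta_1$, whose role is to rule out the degenerate collision $a=b$ among the six non-zero differences.
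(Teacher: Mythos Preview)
Your proposal is correct and follows essentially the same route as the paper: both arguments enumerate the seven possible values of $e_{jj}(x)-e_{jj}(y)$, use the ellipticity condition \eqref{eq:elliptic} to show the six nonzero ones are pairwise distinct, and conclude that the difference pins down the wells at $x$ and $y$ (with the zero case handled separately). One small imprecision: the second inequality $\eta_2+\eta_3>2\eta_1$ is equivalent to $a>b$, not merely to $a\neq b$; however only the consequence $a\neq b$ is needed, so the argument goes through unchanged.
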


We view this lemma as encoding the determinedness structure of the wells from \eqref{eq:BFJK}. In particular, it is a highly nonlinear ingredient in our argument.

\begin{proof}
We note that $e_{jj}(x)-e_{jj}(y)$ can only attain the values
\begin{align}
  \label{eq:values1}
0, \pm (\eta_3 - \eta_2), \pm (\eta_1 - \eta_2), \pm (\eta_3 - \eta_1).
\end{align}
If the difference is zero, then by determinedness, this is also the case for the $e_{kk}$ difference. Without loss of generality, we may thus assume that it is not zero. 

We claim that in the remaining cases, the  values listed in
  \eqref{eq:values1} are all distinct and that hence the difference
$e_{jj}(x)-e_{jj}(y) $ already determines $e_{jj}(x)$ and $e_{jj}(y)$ which then
by determinedness also determines $e_{kk}(x)$ and $e_{kk}(y)$ (and thus in particular the difference).
Indeed, by the assumption \eqref{eq:elliptic} we have that
\begin{align*}
\eta_3 - \eta_2 > \eta_1-\eta_2>0, \ \eta_3-\eta_2 > \eta_3-\eta_1 >0.
\end{align*}
We further claim that 
\begin{align*}
 \eta_3-\eta_1 >  \eta_1-\eta_2.
\end{align*}
Indeed, this is equivalent to the second assumption in \eqref{eq:elliptic}. As a consequence,
\begin{align}
\label{eq:values}
\eta_3-\eta_2 > \eta_3-\eta_1 > \eta_1 - \eta_2>0,
\end{align}
and thus all values in \eqref{eq:values1} are distinct.
After possibly exchanging $x,y$, we may assume that $e_{jj}(x)-e_{jj}(y)>0$ and that this difference attains one of the values from \eqref{eq:values}. This however uniquely determines $e_{jj}(x), e_{jj}(y)$ which concludes the proof.
\end{proof}

\begin{lem}
\label{lem:two_waves}
Let $b: \R^2 \rightarrow \R$, $b=b(x_2,x_3)$, be such that
\begin{align*}
\p_2 \p_3 b = 0, \ (\p_{22} - \p_{33}) b = 0 \mbox{ in } \mathcal{D}'.
\end{align*}
Then, $b$ must be a quadratic function in $x_2, x_3$.
\end{lem}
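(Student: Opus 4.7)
The plan is to exploit the two distributional equations sequentially: first use $\partial_2 \partial_3 b = 0$ to reduce $b$ to a sum of one-variable functions, and then use the wave-type equation $(\partial_{22} - \partial_{33})b = 0$ together with a separation-of-variables argument to pin down each summand as a quadratic polynomial.

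More precisely, I would first invoke the standard distributional fact that a distribution $b \in \mathcal{D}'(\R^2)$ with $\partial_2 \partial_3 b = 0$ decomposes as $b(x_2,x_3) = f(x_2) + g(x_3)$ for some $f \in \mathcal{D}'(\R_{x_2})$ and $g \in \mathcal{D}'(\R_{x_3})$. Indeed, $\partial_2 \partial_3 b = 0$ means that $\partial_2 b$ is constant in $x_3$, hence equal to some $h \in \mathcal{D}'(\R_{x_2})$ (viewed as distribution in $\R^2$); choosing any distributional antiderivative $f$ of $h$, the distribution $b - f(x_2)$ has vanishing $x_2$-derivative and thus equals some $g(x_3)$.

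Next, I would substitute this decomposition into the remaining equation to obtain
\begin{equation*}
f''(x_2) - g''(x_3) = 0 \quad \text{in } \mathcal{D}'(\R^2).
\end{equation*}
Since $f''$ depends only on $x_2$ and $g''$ only on $x_3$, a second use of the same type of reasoning (pairing against a product test function and varying it in each slot) forces $f'' = g'' = c$ for some constant $c \in \R$. Integrating twice in the distributional sense yields $f(x_2) = \tfrac{c}{2}x_2^2 + \alpha x_2 + \beta$ and $g(x_3) = \tfrac{c}{2}x_3^2 + \gamma x_3 + \delta$ for constants $\alpha, \beta, \gamma, \delta \in \R$, and summing these concludes the proof that $b$ is a quadratic polynomial in $(x_2,x_3)$.

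The only mildly delicate point is making the decomposition $b = f(x_2) + g(x_3)$ rigorous at the level of distributions and the subsequent separation-of-variables argument; however, both steps are classical. No regularity of $b$ beyond being a distribution is needed, since the argument is purely at the level of identities between distributional derivatives.
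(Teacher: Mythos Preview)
Your proof is correct. The paper takes a slightly different, more direct route: rather than first decomposing $b$ as $f(x_2)+g(x_3)$, it simply shows that all third-order distributional derivatives of $b$ vanish. Differentiating $\partial_2\partial_3 b=0$ gives $\partial_{223}b=\partial_{233}b=0$, and then differentiating $(\partial_{22}-\partial_{33})b=0$ in $x_2$ and in $x_3$ yields $\partial_{222}b=\partial_{233}b=0$ and $\partial_{333}b=\partial_{223}b=0$ respectively; hence $D^3b=0$ and $b$ is a quadratic polynomial. Your approach has the advantage of exhibiting the explicit additive structure $b=f(x_2)+g(x_3)$ along the way, at the cost of the mild distributional decomposition step you flag. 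The paper's argument is a line shorter and sidesteps that step entirely, but both are equally valid and elementary.
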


\begin{proof}
It suffices to show that $D^3 b = 0$.
To this end, we note that by differentiating the first equation for $b$, we
obtain that all mixed third order derivatives vanish. Combining this with
derivatives of the second equation also implies that the remaining two
third order derivatives vanish.
\end{proof}

\begin{proof}[Proof of Theorem \ref{thm:rigidity}]
\emph{Step 1.}
We first claim that for any $h_2, h_3 \in \R$ sufficiently small (so that $B_{|h_2|+|h_3|}(x)\subset \Omega$) the iterated differences satisfy
\begin{align}
\label{eq:step1}
\partial_2^{h_2} \partial_3^{h_3} e_{22} (x) = 0, 
\end{align}
where $\partial_j^{h_j} f(x) = f(x + h_j e_j)- f(x)$ and where $\{e_1,e_2,e_3\}$ denotes the canonical unit basis of $\R^3$.

Indeed, by the wave structure from \eqref{eq:strain-structure}, we note that 
\begin{align*}
\p_2^{h_2} \p_3^{h_3} e_{11} = 0.
\end{align*}
By this equation for $e_{11}$ we have $ \p_3^{h_3} e_{11}(x + h_2 e_2) =  \p_3^{h_3} e_{11}(x )$. Invoking Lemma \ref{lem:determined} we then also have that $ \p_3^{h_3} e_{22}(x + h_2 e_2) =  \p_3^{h_3} e_{22}(x )$ which shows the claim \eqref{eq:step1}.\\

\emph{Step 2.} 
Using the weak formulation, we conclude from step 1 and the wave structure of $e_{22}$ that
\begin{align*}
\p_{2}\p_3 b_{23} = 0, \ (\p_{22} - \p_{33}) b_{23} =0,
\end{align*}
where $b_{23}(x_2,x_3) := f_{23}(x_2 + x_3) + g_{23}(x_2-x_3)$.
By Lemma \ref{lem:two_waves} this however implies that $b_{23}$ is a quadratic function in $x_2, x_3$.
We claim that since $e_{22}$ and $ e_{33}$ are discrete, we obtain that $b_{23}$ must already be constant. Indeed, fixing $x_1, x_2$ we note that $b_{23}(x_2, \cdot) = e_{22}(x_1,x_2, \cdot)+f_{12}(x_1+x_2) + g_{12}(x_1-x_2)$ attains at most three values as a function of $x_3$. A similar argument using the discreteness of $e_{33}$ shows that $b_{23}(\cdot, x_3)$ is discrete. Being both quadratic and discrete in each variable, $b_{23}$ must already be constant.

The proof is now concluded by noting that the same argument applies to all other plane waves in the decomposition \eqref{eq:strain-structure}.
\end{proof}

\subsection{An alternative decomposition}
\label{sec:notes}

In this section we provide an alternative proof of Theorem \ref{thm:rigidity},
where instead of the decomposition into plane waves as in Proposition
\ref{prop:structure-def}, we establish a decomposition into functions of single variables.

\begin{prop}
  \label{prop:xdecomp}
  Let $\Omega \subset \R^3$ be an open, simply connected domain and let $e=e(u)$ be a
  given solution of
  \begin{align*}
    e(u) \in K_3  \mbox{ a.e.~in } \Omega.
  \end{align*}
  Then there exist nine functions $f_{i}^{j}(x_j)$, $i,j=1,2,3$, which depend only on
  one variable, such that for $i=1,2,3$ we have the decomposition
  \begin{align}
    \label{eq:coordinatewaves}
    e_{ii}= \sum_{j=1}^3 f_{i}^j(x_j).
  \end{align}
\end{prop}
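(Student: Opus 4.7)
The strategy bypasses the plane-wave decomposition of Proposition \ref{prop:structure-def} and instead shows directly that all off-diagonal second partial derivatives of the diagonal strains vanish distributionally,
\[
\partial_j\partial_k e_{ii} = 0 \quad \text{in } \mathcal{D}'(\Omega), \quad i\in\{1,2,3\},\ j\neq k.
\]
Once this is in hand, the representation \eqref{eq:coordinatewaves} follows by iterating the standard ``vanishing mixed partial'' decomposition for distributions.

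The three ``free'' vanishings $\partial_{23} e_{11} = 0$, $\partial_{13} e_{22} = 0$, $\partial_{12} e_{33} = 0$ are exactly the Saint-Venant compatibility relations \eqref{eq:comp1} and involve no structural assumption on $K_3$. To transfer each of them to the remaining two diagonal entries, I reuse the argument from Step~1 of the proof of Theorem \ref{thm:rigidity0}. For concreteness, $\partial_{23} e_{11} = 0$ is equivalent to $\partial_2^{h_2} \partial_3^{h_3} e_{11}(x) = 0$ for a.e.~$x$ and all small $h_2, h_3 \in \R$, which can be rewritten as
\[
e_{11}(x+h_3 e_3) - e_{11}(x) = e_{11}(x+h_2 e_2 + h_3 e_3) - e_{11}(x+h_2 e_2).
\]
Applying Lemma \ref{lem:determined} to the pairs $(x, x+h_3 e_3)$ and $(x+h_2 e_2,\, x+h_2 e_2+h_3 e_3)$ transfers this identity to $e_{kk}$ for $k = 2, 3$, yielding $\partial_2^{h_2} \partial_3^{h_3} e_{kk} = 0$ and hence $\partial_{23} e_{kk} = 0$ distributionally. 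The analogous arguments starting from the other two baseline conditions cover all pairs $(j,k)$, so $\partial_j \partial_k e_{ii} = 0$ for every $i$ and every $j \neq k$.

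With all mixed second partials vanishing, the decomposition \eqref{eq:coordinatewaves} follows from a classical distributional argument: $\partial_{23} e_{ii} = 0$ gives $e_{ii} = \alpha_i(x_1, x_2) + \beta_i(x_1, x_3)$; then $\partial_{12} e_{ii} = \partial_{12} \alpha_i = 0$ yields $\alpha_i = \gamma_i(x_1) + \delta_i(x_2)$, and similarly $\partial_{13} e_{ii} = \partial_{13} \beta_i = 0$ gives $\beta_i = \mu_i(x_1) + \nu_i(x_3)$. Setting $f_i^1 := \gamma_i + \mu_i$, $f_i^2 := \delta_i$, $f_i^3 := \nu_i$ produces the claimed form. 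The essentially only nontrivial obstacle is the determinedness transfer of the second paragraph: it is precisely here that the ellipticity condition \eqref{eq:elliptic} enters (through Lemma \ref{lem:determined}), and it is what distinguishes the present proof from the general wave reasoning of Section \ref{sec:struc}. The remaining steps are classical but must be phrased at the $L^\infty$/distributional level given the low regularity of $e_{ii}$.
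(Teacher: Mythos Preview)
Your proof is correct and follows essentially the same route as the paper: both start from the three Saint-Venant compatibility conditions $\partial_{23}e_{11}=\partial_{13}e_{22}=\partial_{12}e_{33}=0$, pass to iterated finite differences, invoke Lemma~\ref{lem:determined} to transfer each vanishing to the remaining diagonal entries, and conclude that all mixed second partials of every $e_{ii}$ vanish. The only difference is organizational (you loop over the three baseline identities and push each to all $e_{kk}$, while the paper fixes one $e_{ii}$ and pulls in the two relevant identities), and you spell out the final single-variable decomposition step that the paper leaves implicit.
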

We remark that while these single variable functions are conceptually simpler
than the plane waves of Proposition \ref{prop:structure-def}, there is a larger
number of them. Furthermore, this result requires more structure of the set $K_3$
in order to make use of Lemma \ref{lem:determined} (in that already in deducing the decomposition the determinedness properties are invoked).
\begin{proof}
We argue similarly as in the proof of Lemma \ref{lem:two_waves} and make use of
the compatibility conditions at the level of difference quotients.
We thus recall that part of the compatibility conditions read
\begin{align*}
  \p_{12} e_{33}=0,\\
  \p_{13} e_{22}=0,\\
  \p_{23} e_{11}=0.
\end{align*}
In particular, for $x\in \Omega$ and any $h_1,h_2 \neq 0$ with $B_{|h_1|+|h_2|}(x)\subset \Omega$ the first condition implies that the
corresponding iterated finite differences of $e_{33}$ vanish:
\begin{align*}
  \p^{h_1}_{1} \p^{h_2}_{2} e_{33}(x) &=0 \\
  \Leftrightarrow  \p^{h_2}_{2}e_{33} (x_1,x_2,x_3) &= \p^{h_2}_{2}e_{33} (x_1+h_1,x_2,x_3).
\end{align*}
By the results of Lemma \ref{lem:determined} both differences also uniquely
determine $\p^{h_2}_{2}e_{22}$ at the respective point and thus also the
finite differences of $e_{22}$ agree:
\begin{align*}
  \p^{h_2}_{2} e_{22}(x_1,x_2,x_3) &= \p^{h_2}_{2} e_{22}(x_1+h_1,x_2,x_3) \quad \forall h_1, h_2\\
  \Rightarrow  \p_{12} e_{22}(x) &=0.
\end{align*}
Repeating this argument for $\p_{23} e_{11}$, it follows
that
\begin{align*}
  \p_{12}e_{22}=\p_{13}e_{22} = \p_{23}e_{22}=0,
\end{align*}
and hence $e_{22}$ has the desired form. The argument for $e_{11}$
and $e_{33}$ is analogous.
\end{proof}

Given this wave decomposition and using the remaining compatibility conditions
we deduce rigidity.
\begin{prop}
  Let $e$ be as in Proposition \ref{prop:xdecomp}, then $e_{ii}$ is constant.
\end{prop}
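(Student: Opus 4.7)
The plan is to combine the single-variable decomposition from Proposition~\ref{prop:xdecomp} with the remaining Saint-Venant compatibility conditions in \eqref{eq:comp2} and the discreteness of the range of the diagonal strains.

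First, I would substitute $e_{ii}(x) = f_i^1(x_1) + f_i^2(x_2) + f_i^3(x_3)$ into the three identities $\p_{jj} e_{ii} + \p_{ii} e_{jj} = 0$ for $\{i,j\} \subset \{1,2,3\}$ with $i\neq j$. Each such identity reduces to $(f_i^j)''(x_j) + (f_j^i)''(x_i) = 0$, where the two summands depend on disjoint variables. Testing against product test functions (or equivalently, using that a function of $x_j$ that equals a function of $x_i$ must be constant) forces each summand to be constant, with opposite signs. Hence every off-diagonal function $f_i^j$ (with $i\neq j$) is a polynomial of degree at most two in its single variable.

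Next, I would exploit the three-valuedness of $e_{ii}$. Fix $i$ and $j\neq i$, let $k$ denote the remaining index, and freeze the coordinates $x_i$ and $x_k$. Then as $x_j$ varies over an open subinterval of $\R$, the function $f_i^j(x_j) = e_{ii}(x) - f_i^i(x_i) - f_i^k(x_k)$ takes at most three real values, namely $\eta_1,\eta_2,\eta_3$ shifted by a constant. A polynomial of degree at most two that takes only finitely many values on an open interval must be constant, since non-constant polynomial level sets are finite while the interval is uncountable. Therefore $f_i^j$ is constant for all $i\neq j$, and $e_{ii}$ depends only on $x_i$.

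To conclude, I would invoke the determinedness result Lemma~\ref{lem:determined}. For any $x,y\in\Omega$ with $x_1 = y_1$ but $x_2\neq y_2$, the preceding step yields $e_{11}(x) = e_{11}(y)$, so that Lemma~\ref{lem:determined} forces $e_{22}(x) = e_{22}(y)$. Since $e_{22}$ depends only on $x_2$, this shows that $e_{22}$ is constant; the same argument applied to the other pairs of indices handles $e_{11}$ and $e_{33}$. The principal obstacle is the second step, where one must combine the analytic constraint (quadratic polynomial) with the algebraic constraint (discrete image) to rule out any nonconstant behaviour; once the off-diagonal pieces are killed, the determinedness Lemma does the rest.
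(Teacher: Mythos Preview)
Your proposal is correct and follows essentially the same approach as the paper: use the second-order compatibility conditions \eqref{eq:comp2} together with the decomposition \eqref{eq:coordinatewaves} to force the off-diagonal pieces $f_i^j$ ($i\neq j$) to be quadratic, then use discreteness of the range of $e_{ii}$ to conclude these quadratics are constant, and finally invoke determinedness to show each $e_{ii}(x_i)$ is constant. Your write-up is in fact slightly more explicit than the paper's in the second and third steps (spelling out why a finite-valued quadratic is constant, and citing Lemma~\ref{lem:determined} rather than merely asserting ``$e_{ii}$ determines $e_{jj}$''), but the underlying argument is identical.
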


\begin{proof}
  We recall the second part of the compatibility conditions
  \begin{align*}
    \p_{11}e_{22}+ \p_{22}e_{11}&=0, \\
    \p_{11}e_{33} + \p_{33}e_{11}&=0, \\
    \p_{22}e_{33} + \p_{33}e_{22}&=0.
  \end{align*}
  Inserting the decomposition \eqref{eq:coordinatewaves} of Proposition
  \ref{prop:xdecomp} it follows that
  \begin{align*}
    \p_{jj} f^{j}_{i} (x_j) + \p_{ii} f^{i}_{j}(x_i)=0
  \end{align*}
  for any $i\neq j$. Since both functions depend on different variables, both
  functions are necessarily constant and hence $f_{i}^j$ is a quadratic
  function.
  That is, the decomposition has the form
  \begin{align*}
    e_{11}(x) &= f_1^1(x_1) + q_2^1 (x_2) + q_3^1(x_3), \\
    e_{22}(x) &= f_2^2(x_2) + q_1^2 (x_1) + q_3^2(x_3), \\
    e_{33}(x) &= f_3^3(x_3) + q_2^3 (x_2) + q_1^1(x_1),
  \end{align*}
  where the functions $q_{i}^j$ are quadratic functions and $f_{i}^{i}(x_i)$ is
  arbitrary.
  
  Furthermore, since $e_{11},e_{22},e_{33}$ are discrete, by restricting to
  suitable hypersurfaces, it follows that all quadratic functions are discrete and hence constant.
  Therefore all $e_{ii}=e_{ii}(x_i)$ are functions of only one distinct variable.
  Finally, since $e_{ii}$ determines $e_{jj}$ this can only be
  possible if each $e_{ii}$ is constant.
\end{proof}

\part{Quantitative Results}
\label{part:2}

In this section, we turn to the proofs of the quantitative results for the $T_3$-structures from above.
In this context, compared to the setting of the Tartar square which was studied in \cite{RT22}, we make use of three major novel steps. These become necessary due to the more complicated well-structure and wave decomposition in the presence of gauge invariances.
\begin{itemize}
\item Firstly, we need to isolate the most relevant planar dependences for the respective strain components. This is achieved by a finite difference argument which (up to controlled errors) allows us to ignore the remaining plane waves. In this context, it is important that the step size $h>0$ in the finite difference operator is chosen to be sufficiently small.
\item Secondly, we use the determinedness to relate one of the other strain components to the original strain component in order to reduce the Fourier support of the functions forming the decomposition of the strain. At this point, we rely on several nonlinear estimates combined with the a priori bounds for the ``irrelevant'' waves.
\item While the first two steps are mainly carried out on certain finite differences of the strain components, in the third step, we finally return to the original waves from the finite difference control by using the localization of the individual waves. Here we require a lower bound on the finite difference step size $h>0$.
\end{itemize}
The combination of isolating the ``relevant waves'' (and thus partially the breaking of symmetry), a careful choice of the finite difference step-size parameter which is balanced exactly in such a way that both the upper and lower bounds are guaranteed, and the ``ellipticity'' arguments which had been introduced in the context of the Tartar square, eventually lead to the desired lower bounds.

We emphasize that the choice of the finite difference step-size $h$ will depend on each stage of
the iteration process, i.e., at the $m$-th iteration we choose $h \sim \frac{1}{\mu^m \mu_1}$ with $\mu, \mu_1$ being constants which are introduced and determined in what follows below.

\section{Preliminary Results}
\label{sec:pre2}

In this section, we recall properties of our $T_3$-structure which we will rely on in the following quantitative considerations.

\subsection{The auxiliary matrices, (symmetrized) rank-one directions} 
We first recall the structure of the matrices $e^{(1)},e^{(2)}, e^{(3)}$. 
To this end, we introduce auxiliary symmetrized rank-one connections although the set of stress-free strains itself is elliptic.

\begin{lem}[Lemma 3.3 in \cite{BFJK94}]
\label{lem:BFJK94}
Let $\eta_1, \eta_2,\eta_3$ satisfy \eqref{eq:elliptic}.
Let $J_1 = \diag(\kappa, \eta_1, \eta_1)$, $J_2 = \diag(\eta_1, \kappa, \eta_1)$, $J_3 = \diag( \eta_1, \eta_1, \kappa)$ with $\kappa = \eta_2 + \eta_3-\eta_1$. Then the matrices $J_i, J_{\ell}$ are pairwise symmetrized rank-one connected with
\begin{align*}
  J_{\ell}-J_{i} = (\kappa-\eta_1) \epsilon_{i \ell m } \left( b_{i \ell} \otimes b_{\ell i} + b_{\ell i} \otimes b_{i \ell} \right),
\end{align*}
where $i,\ell,m\in\{1,2,3\}$, 
and where
\begin{align}
\label{eq:sym_rank_one}
\begin{split}
&b_{12} = \frac{1}{\sqrt{2}} \begin{pmatrix}
1 \\ 1 \\0
\end{pmatrix}, \
b_{31} = \frac{1}{\sqrt{2}} \begin{pmatrix}
1 \\ 0 \\1
\end{pmatrix}, \
b_{23} = \frac{1}{\sqrt{2}} \begin{pmatrix}
0 \\ 1 \\1
\end{pmatrix},\\
&b_{21} = \frac{1}{\sqrt{2}} \begin{pmatrix}
- 1 \\ 1 \\ 0
\end{pmatrix}, \
b_{13} = \frac{1}{\sqrt{2}} \begin{pmatrix}
1 \\ 0 \\ -1
\end{pmatrix},\
b_{32} = \frac{1}{\sqrt{2}} \begin{pmatrix}
0 \\ -1 \\1
\end{pmatrix},
\end{split}
\end{align}
and 
\begin{align*}
\epsilon_{i\ell m} = 
\left\{
\begin{array}{ll}
1 \mbox{ if } (i, \ell, m) \mbox{ is an even permutation of } (1,2,3),\\
-1 \mbox{ if } (i, \ell, m) \mbox{ is an odd permutation of } (1,2,3),\\
0 \mbox{ else}.
\end{array} \right.
\end{align*}
Moreover, for $i \in\{1,2,3\}$ 
\begin{align*}
&e^{(i)}-J_{i} = (\eta_1-\eta_2)\left( b_{i j} \otimes b_{j i} + b_{j i} \otimes b_{i j} \right),\\
&J_{j}- e^{(i)} = (\eta_{1}-\eta_{3})\left( b_{i j} \otimes b_{j i} + b_{j i} \otimes b_{i j} \right),
\end{align*}
where we set $j:=(i-1)$ {\rm mod} $(3)$.
\end{lem}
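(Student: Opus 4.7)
The plan is to treat both displayed identities as purely algebraic identities between diagonal matrices and symmetrized rank-one tensor products built from the explicit vectors in \eqref{eq:sym_rank_one}. Since for each ordered pair $(i,\ell)$ the vectors $b_{i\ell}, b_{\ell i}$ are supported only on the coordinate plane spanned by $e_i, e_\ell$, the tensor product $b_{i\ell}\otimes b_{\ell i}+b_{\ell i}\otimes b_{i\ell}$ reduces to a $2\times 2$ calculation embedded into the $3\times 3$ ambient space. The entire proof thus reduces to verifying a single representative of each identity and then invoking the cyclic symmetry of the wells together with the sign conventions of the Levi-Civita symbol.

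For the first identity, I would fix the representative triple $(i,\ell,m)=(1,2,3)$, so that $\epsilon_{i\ell m}=1$. A direct computation gives $J_{2}-J_{1}=(\kappa-\eta_{1})\,\diag(-1,1,0)$. On the right-hand side, using $b_{12}=\tfrac{1}{\sqrt{2}}(1,1,0)^{t}$ and $b_{21}=\tfrac{1}{\sqrt{2}}(-1,1,0)^{t}$, an elementary computation yields $b_{12}\otimes b_{21}+b_{21}\otimes b_{12}=\diag(-1,1,0)$, which matches. The remaining index configurations would be handled by observing that swapping $(i,\ell)$ leaves the symmetrized tensor product invariant while flipping the sign of $\epsilon_{i\ell m}$, consistent with $J_{\ell}-J_{i}=-(J_{i}-J_{\ell})$; the index pairs $(1,3)$ and $(2,3)$ are obtained by the obvious cyclic relabeling of axes using the corresponding basis vectors in \eqref{eq:sym_rank_one}.

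For the second group of identities, I would fix $i=1$ so that $j=3$ under the cyclic convention (identifying $0$ with $3$). Using $\kappa=\eta_{2}+\eta_{3}-\eta_{1}$, the left-hand side expands as
\[
e^{(1)}-J_{1}=\diag(\eta_{3}-\kappa,\,0,\,\eta_{2}-\eta_{1})=(\eta_{1}-\eta_{2})\,\diag(1,0,-1).
\]
Computing directly with $b_{13}=\tfrac{1}{\sqrt{2}}(1,0,-1)^{t}$ and $b_{31}=\tfrac{1}{\sqrt{2}}(1,0,1)^{t}$ gives $b_{13}\otimes b_{31}+b_{31}\otimes b_{13}=\diag(1,0,-1)$, establishing the first claim for $i=1$. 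The companion identity for $J_{j}-e^{(i)}$ would then follow either by an analogous direct calculation, or more economically by subtracting the just-verified formula from the $J_{\ell}-J_{i}$ identity proved in the previous step, since $J_{j}-e^{(i)}=(J_{j}-J_{i})-(e^{(i)}-J_{i})$ and the two rank-one terms share the same symmetrized direction. The cases $i=2,3$ are recovered by cyclic permutation of the coordinate indices.

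There is no conceptual obstacle. The only mild care concerns the sign bookkeeping in the Levi-Civita symbol and in the pairs $b_{i\ell}$ versus $b_{\ell i}$ (these are \emph{not} negatives of each other, so the symmetrization $b_{i\ell}\otimes b_{\ell i}+b_{\ell i}\otimes b_{i\ell}$ must be computed carefully), together with the consistent interpretation of the cyclic index $j=(i-1)\bmod 3$ as an element of $\{1,2,3\}$.
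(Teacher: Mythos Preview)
Your proposal is correct: the paper does not supply its own proof of this lemma (it is stated with attribution to \cite{BFJK94} and followed immediately by the remark ``We refer to Figure \ref{fig:T3} for an illustration of this''), so direct algebraic verification as you outline is exactly what is expected. Your representative computations for $(i,\ell)=(1,2)$ and $i=1$ are accurate, and the reduction of the remaining cases to these by cyclic symmetry together with the antisymmetry $J_\ell-J_i=-(J_i-J_\ell)$ matching the sign flip of $\epsilon_{i\ell m}$ is sound.
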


We refer to Figure \ref{fig:T3} for an illustration of this.

For convenience of notation, we collect the symmetrized rank-one directions into the following sets
\begin{align}
\label{eq:normals}
\mathcal{B}_{i \ell}:= \left\{ b_{i \ell}, b_{\ell i} \right\}, \ \mathcal{B}_{\ell}:= \mathcal{B}_{i \ell}\cup \mathcal{B}_{\ell j}= \left\{b_{i \ell}, b_{\ell i}, b_{\ell j}, b_{j \ell} \right\}, \ \mathcal{B}:=\{b_{12},b_{21},b_{13},b_{31},b_{23},b_{32}\}.
\end{align}

To shorten notation, we also write
\begin{equation}\label{eq:data-sets}
\mathcal{E}:=\conv\{J_1,J_2,J_3\}\cup \bigcup\limits_{j=1}^3 [J_j, e^{(j)}].
\end{equation}
It is known \cite[Proposition 4.9]{CS} that $\mathcal{E}$ is contained in the \emph{symmetrized rank-one convex hull} of $K_3$.

\subsection{Notation and preliminary results in Fourier analysis}
In the following, as done in, e.g.,\ \cite{KKO13,CO,RT22}, we will work in Fourier space making use of \emph{frequency localization techniques} and \emph{multiplier theorems}.
We therefore introduce some notation and known results that will be useful in our analysis.

We use the following notation for the Fourier transform: Given $u\in L^1(\T^3)$,
$$
\F u(k) := \int_{\T^3} e^{-2\pi i k\cdot x}u(x) dx, \quad k\in\Z^3
$$
denotes the $k$\emph{-th Fourier coefficient} of $u$.
If there is no ambiguity we will use the notation $\hat u=\F u$.

A bounded function $m:\Z^3\to\R$ gives rise to a \emph{Fourier multiplier} $m(D)$ defined as
$$
\F(m(D)u)(k):=m(k)\hat u(k), \quad \text{for every } k\in\Z^3,
$$
for $u\in L^1(\T^3)$.
We now define a class of functions whose restriction to $\Z^3$ is associated to a Fourier multiplier (thanks to the transference principle, see, e.g.,\ \cite[Theorem 4.3.7]{Grafakos}) which is $L^p$-$L^p$ bounded for any $p\in(1,\infty)$.

Given a constant $A>0$, we define $\mathcal{M}_A\subset C^\infty_c(\R^3 \setminus \{0\})$ as follows: $m\in\mathcal{M}_A$ if and only if 
there exists an orthonormal basis $\{v_1,v_2,v_3\}$ such that for every $\alpha\in\N^3$ with $|\alpha|\le3$
$$
|\p_{v_1}^{\alpha_1}\p_{v_2}^{\alpha_2}\p_{v_3}^{\alpha_3} m(\xi)|\le A|\xi\cdot v_1|^{-\alpha_1}|\xi\cdot v_2|^{-\alpha_2}|\xi\cdot v_3|^{-\alpha_3},
$$
for every $\xi\in\R^3$ such that $\xi\cdot v_j\neq0$, $j\in\{1,2,3\}$.

By combining the transference principle \cite[Theorem 4.3.7]{Grafakos} and a corollary of the Marcinkiewicz's multiplier Theorem \cite[Corollary 6.2.5]{Grafakos}, we obtain the following result.

\begin{lem}
\label{lem:mult-thm}
Let $A>0$ and let $\mathcal{M}_A$ be defined as above.
Then, for any $p\in(1,\infty)$, for every $m\in\mathcal{M}_A$ there holds
\begin{align}
\label{eq:marcinkiewicz-gen}
\|m(D) f\|_{L^p} \leq C A \max\left\{ p , \frac{1}{p-1} \right\}^{18} \|f\|_{L^p},
\end{align}
for every $f\in L^p(\T^3;\C)$.
\end{lem}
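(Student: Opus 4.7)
The plan is exactly the one indicated immediately before the statement: combine Corollary 6.2.5 of \cite{Grafakos} (a quantitative Marcinkiewicz multiplier theorem on $\R^3$) with Theorem 4.3.7 of \cite{Grafakos} (the de Leeuw transference principle from $\R^3$ to $\T^3$). The only additional ingredient, which is what forces the slightly unusual formulation via the class $\mathcal{M}_A$, is that the derivative bounds are allowed to be read in an arbitrary orthonormal frame rather than the canonical one; I therefore first rotate to the canonical frame, then apply Marcinkiewicz on $\R^3$, and finally transfer to the torus.

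First I would reduce to the case $v_j=e_j$. Given $m\in\mathcal{M}_A$ with associated basis $\{v_1,v_2,v_3\}$, let $R\in SO(3)$ satisfy $R e_j=v_j$ and set $\tilde m:=m\circ R$. Rotation is an $L^p$-isometry in both physical and frequency space, so the multiplier norm is unchanged under this conjugation, and the hypothesis on $m$ translates into
\[
|\partial_1^{\alpha_1}\partial_2^{\alpha_2}\partial_3^{\alpha_3}\tilde m(\xi)|\le A\,|\xi_1|^{-\alpha_1}|\xi_2|^{-\alpha_2}|\xi_3|^{-\alpha_3}
\]
for every $\alpha\in\N^3$ with $|\alpha|\le 3$. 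In particular this covers every $\alpha\in\{0,1\}^3$, which is precisely the Marcinkiewicz condition in dimension three.

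Applying Corollary 6.2.5 of \cite{Grafakos} then gives $\|\tilde m(D)g\|_{L^p(\R^3)}\le C A\,(p^*)^{18}\,\|g\|_{L^p(\R^3)}$ with $p^*:=\max\{p,p/(p-1)\}$, where the exponent $18=3\cdot 6$ originates from invoking the one-dimensional Littlewood--Paley square-function inequality once per coordinate direction. Since $p^*$ and $\max\{p,1/(p-1)\}$ are comparable up to an absolute constant, this gives the stated dependence on $p$. Finally, the transference principle (Theorem 4.3.7 of \cite{Grafakos}) transfers this $\R^3$-bound to the corresponding bound on $L^p(\T^3)$ for the multiplier $\tilde m|_{\Z^3}$, and undoing the rotation recovers the claim for $m$. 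The only mildly delicate point is that $m$ is only smooth on $\R^3\setminus\{0\}$; this is harmless for transference, which needs continuity at the lattice points of $\Z^3$ (all away from the origin and, after possibly a harmless mollification, from the coordinate hyperplanes), and for Marcinkiewicz, whose Littlewood--Paley proof only sees the derivative bounds almost everywhere. I do not expect a substantial obstacle: once the rotation reduction is carried out, the result is a direct quotation of the two cited theorems with their explicit constants tracked through.
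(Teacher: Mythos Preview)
Your approach is exactly the one the paper indicates (the paper gives no proof beyond citing the two Grafakos results), and the rotation reduction you spell out is the natural way to handle the non-standard frame in the definition of~$\mathcal{M}_A$.

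There is, however, one genuine ordering error in your final two sentences. You write that transference gives the bound for $\tilde m|_{\Z^3}$ on $L^p(\T^3)$ and that ``undoing the rotation recovers the claim for $m$''. This does not work: the rotation $R$ does not preserve $\Z^3$, so conjugation by $R$ is \emph{not} an isometry of $L^p(\T^3)$ and there is no way to pass from $\tilde m|_{\Z^3}$ to $m|_{\Z^3}$ on the torus. The fix is simply to swap the last two steps: after Marcinkiewicz gives the $L^p(\R^3)$ bound for $\tilde m$, use rotation invariance \emph{on $\R^3$} (where it is indeed an isometry in both physical and frequency variables) to obtain the same $L^p(\R^3)$ bound for $m$ itself, and only then apply the transference principle to $m|_{\Z^3}$. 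With this reordering the argument is complete.
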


In the following, for every $k\in\R^3$ we will denote $\hat k:=\frac{k}{|k|}$ for $k\neq 0$ and $0$ otherwise.

\subsection{Elastic energy decomposition, high frequency bounds and nonlinear reduction argument} 
In this section, we begin to translate the qualitative arguments from Part \ref{part:1} into quantitative results. To this end, we begin by estimating the elastic energy from below. For every $\bar{e}\in\R^{3\times3}_{sym}$, setting $v(x) = u(x)-\bar{e} x$ and $\tilde{\chi}=\chi- \bar{e}$, we obtain
\begin{align}
\label{eq:el_chi}
\begin{split}
E_{el}(\chi)
&:= \inf\limits_{u \in \mathcal{A}_{\bar{e}}} \int\limits_{\T^3} |e( u) - \chi|^2 dx
= \inf\limits_{v \in \mathcal{A}_0(\T^3, \R^3)} \int\limits_{\T^3} |e( v)-\tilde{\chi}|^2 dx\\
&=\inf\limits_{v \in H^1(\T^3, \R^3)}\int\limits_{\T^3} |e( v)- \tilde{\chi}|^2 dx.
\end{split}
\end{align}

Building upon the above notation, we also introduce the \emph{minimal total energy} for a given $\chi \in BV(\T^3, K_3)$ as
\begin{equation}\label{eq:e-eps}
E_\epsilon(\chi) := E_{el}(\chi)+\epsilon E_{surf}(\chi),
\end{equation}
where $E_{surf}$ is defined as in \eqref{eq:surf}.

As in \cite{KKO13, CO}, we next consider the Fourier multiplier associated with the elastic energy $E_{el}(\chi)$.
This will subsequently allow us to decompose the diagonal components of the strain into six one-dimensional waves (in parallel to Proposition \ref{prop:xdecomp} in the stress-free setting).

\begin{lem}[Elastic energy control, Lemma 4.2 in \cite{KKO13} and Lemma 3.1 in \cite{CO1}]
\label{lem:elast2}
Let $\chi\in L^\infty(\T^3;K_3)$, $\bar{e}\in\R^{3\times3}_{sym}$ and $\tilde\chi=\chi-\bar{e}$.
Let $E_{el}$ be as in \eqref{eq:el_chi} and
let $\mathcal{B}_j$ denote the sets from \eqref{eq:normals}. Then,
\begin{align*}
E_{el}(\chi) \gtrsim \sum\limits_{j=1}^3 \sum\limits_{k \in \Z^3} \dist^2\big(\hat k, \mathcal{B}_j\big) |\F\tilde{\chi}_{jj}(k)|^2.
\end{align*}
In particular $|\langle\chi\rangle-\bar{e}|^2=|\F\tilde{\chi}(0)|^2\lesssim E_{el}(\chi).$
\end{lem}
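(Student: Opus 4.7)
The plan is to pass to the Fourier side, identify the elastic energy with a sum of per-frequency squared distances to the symmetrized rank-one cone, and then read off the claimed component-wise bound from the pointwise constraint that $\chi$ takes values in $K_3$. This strategy follows \cite[Lemma 4.2]{KKO13} and \cite[Lemma 3.1]{CO1} and reduces matters to an essentially algebraic inequality. Concretely, Plancherel on $\T^3$ together with $\widehat{e(v)}(k) = 2\pi i\,\mathrm{sym}(\hat v(k) \otimes k)$ gives, for every admissible $v$,
\[
\int_{\T^3}|e(v)-\tilde\chi|^2\,dx = \sum_{k\in\Z^3}\bigl|\hat{\tilde\chi}(k) - 2\pi i\,\mathrm{sym}(\hat v(k)\otimes k)\bigr|^2,
\]
and the minimization in $\hat v(k)$ decouples over frequencies. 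At $k=0$ the second summand is zero, so the contribution is exactly $|\hat{\tilde\chi}(0)|^2=|\langle\chi\rangle-\bar e|^2$, which already yields the ``in particular'' part of the statement. For $k\neq0$ one Frobenius-orthogonally projects $\hat{\tilde\chi}(k)$ onto $V_{\hat k}:=\{\mathrm{sym}(a\otimes\hat k):a\in\C^3\}$. Solving the normal equations for $a$ yields the closed form $P_{\hat k}^{\perp}M = \pi M\pi$ with $\pi := I - \hat k\otimes\hat k$, so that $E_{el}(\chi)=|\hat{\tilde\chi}(0)|^2+\sum_{k\neq 0}|\pi\hat{\tilde\chi}(k)\pi|_F^2$.

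Next, the pointwise inclusion $\chi\in K_3$ forces additional structure on the Fourier coefficients. Since $\chi=\sum_j e^{(j)}\chi_j$ with $\chi_1+\chi_2+\chi_3=1$ a.e., one has $\tr(\chi)\equiv\eta_1+\eta_2+\eta_3$ in $\Omega$, and hence $\tr(\tilde\chi)$ is a constant, which in turn forces $\hat{\tilde\chi}(k)$ to be a trace-free diagonal matrix for every $k\neq 0$. Writing $M:=\hat{\tilde\chi}(k)=\diag(t_1,t_2,t_3)$ with $t_1+t_2+t_3=0$, the lemma thus reduces to the purely algebraic inequality
\[
|\pi M \pi|_F^2 \;\ge\; c\,\dist^2(\hat k,\mathcal B_j)\,|t_j|^2\qquad \text{for }\hat k\in S^2,\ j\in\{1,2,3\}.
\]

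This algebraic inequality is verified by a zero-set analysis. A direct case distinction according to the number of vanishing components of $\hat k$ shows that $\pi M\pi=0$ for some trace-zero diagonal $M$ with $M_{jj}\neq 0$ exactly when $\hat k\in\mathcal B_j$; this is consistent with the symmetrized rank-one structure recorded in Lemma \ref{lem:BFJK94}. Since the left-hand side is a smooth function of $(\hat k,t_1,t_2)$ (using $t_3=-t_1-t_2$), a Taylor expansion in a normal coordinate system near each $\hat k_0\in\mathcal B_j$ (after normalizing $t_j=1$) together with compactness of $S^2$ away from $\mathcal B_j$ gives the quadratic lower bound $\gtrsim\dist^2(\hat k,\mathcal B_j)|t_j|^2$. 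The main obstacle is precisely this transversality computation at the points of $\mathcal B_j$: one has to verify that, under the trace-zero constraint, $|\pi M\pi|_F^2$ vanishes to exactly second order as $(\hat k,M)$ approaches a rank-one compatible pair with $M_{jj}\neq 0$. This reflects the pairwise incompatibility of the wells outside of the six admissible symmetrized rank-one directions in $\mathcal B$, cf.\ Lemma \ref{lem:BFJK94}. Summing the per-frequency bound over $k\in\Z^3$ and $j\in\{1,2,3\}$ then yields the claim.
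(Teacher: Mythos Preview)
Your approach is correct and essentially coincides with the paper's: the paper defers entirely to \cite[Lemma 3.1]{CO1}, merely noting that one computes the Euler--Lagrange equations for the Fourier multiplier and uses the diagonal structure of $\chi$, which is precisely your Plancherel-plus-projection route leading to $E_{el}(\chi)=|\hat{\tilde\chi}(0)|^2+\sum_{k\neq0}|\pi\hat{\tilde\chi}(k)\pi|_F^2$ together with the diagonal, trace-free constraint on $\hat{\tilde\chi}(k)$ for $k\neq0$.

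One small point to tighten in your compactness step: after normalizing $t_j=1$, the remaining diagonal parameter (say $t_2$ when $j=1$, with $t_3=-1-t_2$) is unbounded, so you cannot apply compactness on $S^2\times\{M:\,t_j=1\}$ directly. The fix is immediate: at fixed $\hat k$, the map $s\mapsto|\pi(\diag(1,0,-1)+s\diag(0,1,-1))\pi|_F^2$ is a quadratic in $s$ with leading coefficient $|\pi\diag(0,1,-1)\pi|_F^2\geq0$, so the infimum over $s$ is attained (and reduces to a constant exactly when $\hat k\in\pm\{b_{23},b_{32}\}$, where one checks it is still strictly positive). The resulting function $G(\hat k):=\inf_s|\pi M_s\pi|_F^2$ is then continuous on $S^2$, vanishes precisely on $\pm\mathcal B_j$, and your Taylor/compactness argument applies to $G$. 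Note also that your zero-set should read $\hat k\in\pm\mathcal B_j$ rather than $\hat k\in\mathcal B_j$, consistent with the reflection symmetry of the cones $C_{b_{ij},\mu,\tilde\mu}$ used later.
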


\begin{proof}
The proof is identical to the one from \cite[Lemma 3.1]{CO1}. It follows by computing the Euler-Lagrange equations associated with the multiplier and using the diagonal structure of $\chi$.
\end{proof}

Complementing the elastic energy estimates, we also recall the high frequency bounds (c.f., for instance, \cite{CO1,RT22}). 

\begin{lem}[High frequency bounds]
\label{lem:high_freq}
Let $\chi\in BV(\T^3;K_3)$, $\bar{e}\in\R^{3\times3}_{sym}$ and $\tilde\chi=\chi-\bar{e}$.
Let $E_{surf}(\tilde{\chi})$ be as in \eqref{eq:surf} and let $\mu_1> 0$. Then,
\begin{align*}
\sum_{\{|k|\geq \mu_1 \}}|\F\tilde{\chi}(k)|^2  \lesssim \mu_1^{-1} \|\tilde{\chi}\|_{L^{\infty}} E_{surf}(\tilde{\chi}).
\end{align*}
\end{lem}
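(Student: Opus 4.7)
My plan is to reduce the bound to a scalar high-frequency estimate for BV functions and then derive the latter from the $L^\infty$--$L^1$ interpolation $\|g\|_{L^2}^2\leq\|g\|_{L^\infty}\|g\|_{L^1}$ applied to a smooth high-frequency projection of the diagonal strain components.

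First, I would note that, since $\bar e$ is constant, $|D\tilde\chi|=|D\chi|$ and $E_{surf}(\tilde\chi)=E_{surf}(\chi)$. Moreover, by the linear relation \eqref{eq:chi}, each entry $\tilde\chi_{jj}$ is an affine combination, with coefficients depending only on $\eta_1,\eta_2,\eta_3$, of the three phase indicators $\chi_1,\chi_2,\chi_3$; hence $|D\tilde\chi_{jj}|(\T^3)\lesssim E_{surf}(\chi)$. Summing over $j\in\{1,2,3\}$, it then suffices to prove the scalar bound
\begin{align*}
\sum_{|k|\geq\mu_1}|\F f(k)|^2\lesssim \mu_1^{-1}\|f\|_{L^\infty(\T^3)}\,|Df|(\T^3)
\end{align*}
for any $f\in L^\infty(\T^3)\cap BV(\T^3)$ and to apply it with $f=\tilde\chi_{jj}$.

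To deduce this scalar inequality, I would fix $\phi\in C_c^\infty(\R^3)$ with $0\leq\phi\leq1$, $\phi\equiv1$ on $B_{1/2}$ and $\phi\equiv0$ outside $B_1$, and set $\phi_{\mu_1}(k):=\phi(k/\mu_1)$. Since $(1-\phi_{\mu_1}(k))^2\geq \mathbf{1}_{\{|k|\geq\mu_1\}}(k)$ for every $k\in\Z^3$, Plancherel gives
\begin{align*}
\sum_{|k|\geq\mu_1}|\F f(k)|^2\leq \|g\|_{L^2(\T^3)}^2,\qquad g:=f-\phi_{\mu_1}(D)f.
\end{align*}
The kernel $K_{\mu_1}$ of $\phi_{\mu_1}(D)$ on $\T^3$, obtained by periodization of $\F^{-1}\phi_{\mu_1}$, is Schwartz-like with $\int K_{\mu_1}=\phi_{\mu_1}(0)=1$, uniformly bounded $L^1$-mass, and, by the scaling $K_{\mu_1}(x)=\mu_1^3 K_1(\mu_1 x)$, satisfies $\int|K_{\mu_1}(h)|\,|h|\,dh\lesssim \mu_1^{-1}$. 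These facts immediately give $\|g\|_{L^\infty(\T^3)}\lesssim \|f\|_{L^\infty(\T^3)}$, while the representation
\begin{align*}
g(x)=\int K_{\mu_1}(h)\bigl(f(x)-f(x-h)\bigr)\,dh,
\end{align*}
combined with the standard BV translation bound $\|\tau_h f-f\|_{L^1(\T^3)}\leq |h|\,|Df|(\T^3)$, yields $\|g\|_{L^1(\T^3)}\lesssim \mu_1^{-1}|Df|(\T^3)$. Applying the interpolation $\|g\|_{L^2}^2\leq \|g\|_{L^\infty}\|g\|_{L^1}$ then closes the argument.

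The only subtlety I anticipate is the passage from the sharp Fourier cutoff in the statement to the smooth cutoff $\phi_{\mu_1}$, which is what allows $L^\infty$ and $L^1$ control to survive the projection; a minor related point is the handling of translations and the periodized kernel on $\T^3$. Neither is a substantive obstacle, and the argument follows closely the standard high-frequency bounds used in \cite{KKO13, CO1, RT22}.
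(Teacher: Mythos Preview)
Your proposal is correct and follows exactly the standard mollification/interpolation argument that the paper itself defers to (the paper gives no self-contained proof but only cites \cite[Proof of Lemma 4.3, Step 2]{KKO13} and \cite[Lemma 2]{RT22}). Your reduction to the scalar diagonal entries, the use of a smooth Fourier cutoff to gain $L^\infty$ and $L^1$ control, and the $\|g\|_{L^2}^2\le\|g\|_{L^\infty}\|g\|_{L^1}$ interpolation together with the BV translation estimate are precisely the ingredients of those references.
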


\begin{proof}
We refer to \cite[Proof of Lemma 4.3, Step 2]{KKO13} or \cite[Lemma 2]{RT22} for a proof of this.
\end{proof}

\begin{figure}[t]
  \centering
  \includegraphics[width=0.5\linewidth]{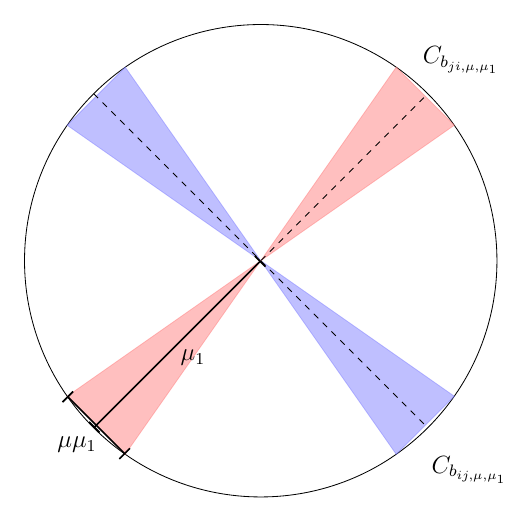}
  \caption{Illustration of the truncated cones defined in \eqref{eq:Cbij}. As $\mu>0$ is
    small, the cones in direction $b_{ij}$ and $b_{ji}$ are disjoint
  except for the origin.}
  \label{fig:conicallocalization}
\end{figure}
Building on Lemma \ref{lem:elast2} and Lemma \ref{lem:high_freq}, we introduce the following conical localization functions: For parameters $0<\mu<1<\tilde{\mu}$ and for $i,j\in\{1,2,3\}$ with $i\neq j$, we define the truncated cones in direction $b_{ij} \in \mathcal{B}$ as
\begin{align}
\label{eq:Cbij}
C_{b_{ij},\mu,\tilde \mu}:=\{k \in \R^3: |\hat k|^2-(b_{ij}\cdot \hat{k})^2 \leq \mu^2, \ |k|\leq \tilde \mu\}
\end{align}
(see Figure \ref{fig:conicallocalization}).
Associated with this we define functions $\chi_{b_{ij},\mu,\tilde \mu}\in
C^{\infty}_c(\R^3\setminus \{0\};[0,1])$ to be non-negative, to be equal to one in $C_{b_{ij},\mu,\tilde \mu}\setminus B_{\frac{1}{2}}$, to vanish outside of $C_{b_{ij},2\mu,2\tilde \mu}\setminus B_{\frac{1}{4}}$, to be symmetric with respect to reflections at the origin. For  $\ell \in \{1,2,3\}\setminus \{i,j\}$ we require it to satisfy the bound
\begin{align}
\label{eq:deriv}
\begin{split}
|\p_{b_{ij}}^{\alpha_1}\p_{b_{ji}}^{\alpha_2}\p_{\ell}^{\alpha_3}\chi_{b_{ij},\mu,\tilde{\mu}}(\xi)|
\leq A|\xi \cdot b_{ij}|^{-\alpha_{1}}|\xi\cdot b_{ji}|^{-\alpha_{2}}|\xi_{\ell}|^{-\alpha_3},\\
\quad \text{for every } \xi\in\R^3 \mbox{ such that } \xi\cdot b_{ij}, \xi\cdot b_{ji}, \xi_\ell\neq0,
\end{split}
\end{align}
for all $\alpha \in \N^3$ with $|\alpha|\le 3$, for some universal constant $A$. 
An example of such multipliers is
$$
\chi_{b_{ij},\mu,\tilde\mu}(k)=\varphi\Big(\frac{\sqrt{|k|^2-|b_{ij}\cdot k|^2}}{\mu|k|}\Big)\varphi\Big(\frac{|k|}{\tilde\mu}\Big)(1-\varphi(4|k|))
$$
where $\varphi\in C^\infty(\R)$ is such that $0\le\varphi\le1$, equals $1$ inside $[-1,1]$ and zero outside $(-2,2)$.
In what follows, with slight abuse of notation, we do not distinguish whether we consider $\chi_{b_{ij},\mu,\tilde \mu}$ as functions on $\R^3$ or on $\Z^3$.

Condition \eqref{eq:deriv} yields that $\chi_{b_{ij},\mu,\tilde\mu}\in\mathcal{M}_A$. 
Thanks to Lemma \ref{lem:mult-thm}, we thus have
\begin{align}
\label{eq:marcinkiewicz}
\|\chi_{b_{ij},\mu,\mu_1}(D) f\|_{L^p}
\leq C A \max\left\{ p , \frac{1}{p-1} \right\}^{18} \|f\|_{L^p},
\end{align}
for every $f\in L^p(\T^3;\C)$.

Building on the bounds for the elastic and surface energy (i.e., Lemmas \ref{lem:elast2} and \ref{lem:high_freq}), we first deduce the following quantitative decomposition of the phase indicator functions. We remark that this is a quantitative version of the wave decomposition result in Proposition \ref{prop:structure-def} in the stress-free setting. A similar, however for our purposes slightly less convenient decomposition can also be found in \cite[Lemma 4.3]{KKO13}.

\begin{lem}[Decomposition]
\label{lem:decomp}
Let $\chi\in BV(\T^3;K_3)$, $\bar{e}\in\mathcal{E}$ and $\tilde\chi=\chi-\bar{e}$.
Let $\mu_0=\mu_0(\mathcal{B})\in(0,1)$, let $0<\mu<\mu_0$,
$\mu_1\geq \frac{1}{2}$ and let $E_\epsilon(\chi)$ be as in \eqref{eq:e-eps}. 
Then the following decomposition holds: There exist six functions $f_{ij},
g_{ij}: \T^3 \rightarrow \R$, $i,j\in\{1,2,3\}$, $i<j$, three functions
$\sigma_{jj}:\T^3 \rightarrow \R$ with $j\in\{1,2,3\}$ and a universal constant $C>0$ such that 
\begin{align*}
\tilde{\chi}_{11} & = f_{12} + g_{12} + f_{13} + g_{13} + \sigma_{11},\\
\tilde{\chi}_{22} & = -f_{12} - g_{12} + f_{23} + g_{23} + \sigma_{22},\\
\tilde{\chi}_{33} & = -f_{13} - g_{13} - f_{23} - g_{23} + \sigma_{33},
\end{align*}
with 
$\supp(\mathcal{F} f_{ij}) \subset C_{b_{ij}, 2\mu, 2\mu_1}\setminus\{0\}$,
$\supp(\mathcal{F} g_{ij}) \subset  C_{b_{ji}, 2\mu, 2\mu_1}\setminus\{0\}$, 
and
\begin{align*}
\sum\limits_{j=1}^{3} \int\limits_{\T^3} |\sigma_{jj}|^2 dx \leq C (\mu^{-2}  + \mu_1^{-1} \epsilon^{-1}) E_{\epsilon}(\chi).
\end{align*}
\end{lem}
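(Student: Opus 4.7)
The strategy is to extract the pairs $(f_{ij},g_{ij})$ as Fourier conical localizations of the relevant diagonal component and to absorb the remainder into $\sigma_{jj}$. Concretely, for each $i<j$ with $i\in\{1,2\}$, I would define
\begin{align*}
f_{ij} := \chi_{b_{ij},\mu,\mu_1}(D)\tilde\chi_{ii}, \qquad g_{ij}:=\chi_{b_{ji},\mu,\mu_1}(D)\tilde\chi_{ii},
\end{align*}
and let $\sigma_{jj}$ be the residual so that the three identities in the statement hold by construction; the Fourier-support assertions for $f_{ij},g_{ij}$ are then automatic from the cutoff properties. One picks $\mu_0=\mu_0(\mathcal{B})\in(0,1)$ small enough that all six cones $C_{b,2\mu_0,\,\cdot\,}$, $b\in\mathcal{B}$, are pairwise disjoint away from the origin --- a purely geometric constraint on $\mathcal{B}$ that can be checked by direct inspection of \eqref{eq:sym_rank_one}.

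The estimate on $\sigma_{11}$ is the model case. By definition $\sigma_{11}=[I-\sum_{b\in\mathcal{B}_1}\chi_{b,\mu,\mu_1}(D)]\tilde\chi_{11}$, and Plancherel reduces the $L^2$-estimate to
\begin{align*}
\|\sigma_{11}\|_{L^2}^2=\sum_{k\in\Z^3}\Bigl|1-\sum_{b\in\mathcal{B}_1}\chi_{b,\mu,\mu_1}(k)\Bigr|^2|\F\tilde\chi_{11}(k)|^2.
\end{align*}
I would split this into frequencies $|k|\ge \mu_1$, estimated via Lemma \ref{lem:high_freq} together with the trivial bound $\|\tilde\chi\|_{L^\infty}\lesssim 1$ (yielding $\lesssim \mu_1^{-1}E_{surf}\le\mu_1^{-1}\epsilon^{-1}E_\epsilon$), and $|k|<\mu_1$, on which the symbol $(1-\sum\chi_b)(k)$ vanishes whenever $\dist(\hat k,\mathcal{B}_1)\le \mu$. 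On the remainder one has $\dist^2(\hat k,\mathcal{B}_1)\ge \mu^2$, so Lemma \ref{lem:elast2} gives $\lesssim \mu^{-2}E_{el}\le\mu^{-2}E_\epsilon$; the $k=0$ contribution is absorbed by $|\F\tilde\chi(0)|^2\lesssim E_{el}$.

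For $\sigma_{22}$ and $\sigma_{33}$ this direct identification fails, because $f_{12},g_{12}$ are derived from $\tilde\chi_{11}$ but appear in the second row with the opposite sign. The resolution uses the pointwise trace-free identity $\tilde\chi_{11}+\tilde\chi_{22}+\tilde\chi_{33}=0$, which holds since every matrix in $K_3$ and every $\bar e\in\mathcal{E}$ has trace $\eta_1+\eta_2+\eta_3$ (immediate on the generators $J_k, e^{(k)}$). Applied to any Fourier multiplier $m(D)$ this yields $m(D)\tilde\chi_{11}=-m(D)\tilde\chi_{22}-m(D)\tilde\chi_{33}$, so that $\sigma_{22}$ rewrites as $[I-\sum_{b\in\mathcal{B}_2}\chi_{b,\mu,\mu_1}(D)]\tilde\chi_{22}$ --- handled as in the previous paragraph --- plus a cross-term $(\chi_{b_{12},\mu,\mu_1}+\chi_{b_{21},\mu,\mu_1})(D)\tilde\chi_{33}$. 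This cross-term has multiplier supported near $\mathcal{B}_{12}$, while Lemma \ref{lem:elast2} forces the Fourier mass of $\tilde\chi_{33}$ to concentrate near $\mathcal{B}_3$; since $\dist(\mathcal{B}_{12},\mathcal{B}_3)$ is bounded below geometrically, Lemma \ref{lem:elast2} delivers once more $\lesssim\mu^{-2}E_{el}$. The same trick disposes of $\sigma_{33}$, and summing all contributions yields the stated estimate.

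The main (and only non-routine) obstacle is precisely this bookkeeping of the cross-terms: one must verify that after invoking trace-freeness, every leftover piece is either a low-frequency multiplier supported away from the well-directions $\mathcal{B}_j$ of the component it acts on (so Lemma \ref{lem:elast2} applies with an effective gap of order $\mu$) or a high-frequency tail (controlled via Lemma \ref{lem:high_freq}). Beyond this, the argument is essentially a direct Plancherel computation against the energy bounds recorded in Section \ref{sec:pre2}.
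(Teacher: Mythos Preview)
Your proposal is correct and follows essentially the same route as the paper: the same definitions $f_{ij}=\chi_{b_{ij},\mu,\mu_1}(D)\tilde\chi_{ii}$, $g_{ij}=\chi_{b_{ji},\mu,\mu_1}(D)\tilde\chi_{ii}$ are used, the $\sigma_{11}$ bound is obtained by the same Plancherel splitting via Lemmas~\ref{lem:elast2} and~\ref{lem:high_freq}, and the trace constraint is invoked in the same way to handle the cross-terms in $\sigma_{22},\sigma_{33}$. The only cosmetic difference is that the paper first introduces twelve auxiliary localizations and then reduces to six via the trace identity, whereas you write down the six directly and estimate the resulting cross-terms; after simplification (using disjointness of the cones) the two expressions for $\sigma_{22},\sigma_{33}$ coincide.
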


We emphasize that the main difference between our plane wave decomposition and the one from \cite{KKO13} consists in the fact that we have a tighter localization than in \cite{KKO13} which is tailored to the cones $C_{b_{ij},\mu,\mu_1}$. For our subsequent iteration argument this will be of central relevance.

\begin{proof}
The claim follows from the estimates for the high frequencies by means of the surface energy (Lemma \ref{lem:high_freq}) together with the observation that
\begin{align*}
E_{el}(\chi) \gtrsim \sum\limits_{j=1}^3 \sum\limits_{k \in \Z^3} \dist^2\big(\hat k, \mathcal{B}_j\big) |\F \tilde\chi_{jj}(k)|^2,
\end{align*}
from Lemma \ref{lem:elast2} and the constant trace condition.

Let us elaborate on this and let us first consider $\tilde{\chi}_{11}$ for simplicity. Combining Lemmas \ref{lem:elast2} and \ref{lem:high_freq} yields that for 
\begin{align}
\label{eq:functions1}
\begin{split}
\tilde{f}_{12}^{(1)}&:= \chi_{b_{12},\mu,\mu_1}(D) \tilde{\chi}_{11},\quad \tilde{g}_{12}^{(1)}:=  \chi_{b_{21},\mu,\mu_1}(D) \tilde{\chi}_{11}\\
\tilde{f}_{13}^{(1)}&:= \chi_{b_{13},\mu,\mu_1}(D) \tilde{\chi}_{11},\quad \tilde{g}_{13}^{(1)}:=  \chi_{b_{31},\mu,\mu_1}(D) \tilde{\chi}_{11},
\end{split}
\end{align}
by Plancherel, it holds that
\begin{align}
\label{eq:error_intro}
\begin{split}
& \|\tilde{\chi}_{11} - (\tilde{f}_{12}^{(1)} + \tilde{g}_{12}^{(1)} + \tilde{f}_{13}^{(1)} + \tilde{g}_{13}^{(1)})\|_{L^2}^2 \\
& \le C \sum_{\Z^3\setminus(\bigcup_{b\in\mathcal{B}_1}C_{b,\mu,\mu_1})}|\F\tilde\chi_{11}(k)|^2 +C|\F\tilde\chi_{11}(0)|^2 \\
&\le C\sum_{\bigcap_{b\in\mathcal{B}_1}\{|\hat k|^2-|\hat k\cdot b|^2\ge\mu^2\}}|\F\tilde\chi_{11}(k)|^2 + C\sum_{|k|\ge\mu_1}|\F\tilde\chi_{11}(k)|^2+C|\F\tilde\chi_{11}(0)|^2 \\
&\leq C(\mu^{-2} E_{el}(\chi) + \mu_1^{-1} E_{surf}(\chi)) \\
&\leq C(\mu^{-2}  + \mu_1^{-1} \epsilon^{-1}) E_{\epsilon}(\chi).
\end{split}
\end{align}
Here we used that
$|\hat{k}-b|\ge\frac{1}{2}|\hat{k}-(\hat{k}\cdot b)b|$ and we considered $\mu_0\in(0,1)$ a constant such that the sets $C_{b_{ij,2\mu,2\mu_1}}$ are all pairwise disjoint for every $\mu<\mu_0$.
Analogous decompositions and estimates also hold for the functions $\tilde{\chi}_{22}$ and $\tilde{\chi}_{33}$: More precisely, the functions
\begin{align}
\label{eq:functions2}
\begin{split}
&\tilde{f}_{12}^{(2)}:= \chi_{b_{12},\mu,\mu_1}(D) \tilde{\chi}_{22}, \quad \tilde{g}_{12}^{(2)}:=  \chi_{b_{21},\mu,\mu_1}(D) \tilde{\chi}_{22}, \\ &\tilde{f}_{23}^{(2)}:= \chi_{b_{23},\mu,\mu_1}(D) \tilde{\chi}_{22}, \quad \tilde{g}_{23}^{(2)}:=  \chi_{b_{32},\mu,\mu_1}(D) \tilde{\chi}_{22},\\
&\tilde{f}_{13}^{(3)}:= \chi_{b_{13},\mu,\mu_1}(D) \tilde{\chi}_{33}, \quad \tilde{g}_{31}^{(3)}:=  \chi_{b_{31},\mu,\mu_1}(D) \tilde{\chi}_{33}, \\
&\tilde{f}_{23}^{(3)}:= \chi_{b_{23},\mu,\mu_1}(D) \tilde{\chi}_{33}, \quad \tilde{g}_{32}^{(3)}:=  \chi_{b_{32},\mu,\mu_1}(D) \tilde{\chi}_{33},
\end{split}
\end{align}
satisfy
\begin{align*}
\begin{split}
\tilde{\chi}_{11} & = \tilde{f}_{12}^{(1)} + \tilde{g}_{12}^{(1)} + \tilde{f}_{13}^{(1)} + \tilde{g}_{13}^{(1)} + \tilde{\sigma}_{11},\\
\tilde{\chi}_{22} & = \tilde{f}_{12}^{(2)} + \tilde{g}_{12}^{(2)} + \tilde{f}_{23}^{(2)} + \tilde{g}_{23}^{(2)} + \tilde{\sigma}_{22},\\
\tilde{\chi}_{33} & = \tilde{f}_{13}^{(3)} + \tilde{g}_{13}^{(3)} + \tilde{f}_{23}^{(3)} + \tilde{g}_{23}^{(3)} + \tilde{\sigma}_{33},
\end{split}
\end{align*}
and, as in \eqref{eq:error_intro} above, 
\begin{align}
\label{eq:error_prelim}
\sum\limits_{j=1}^{3} \|\tilde{\sigma}_{jj}\|_{L^2}^2   \leq C(\mu^{-2}  + \mu_1^{-1} \epsilon^{-1}) E_{\epsilon}(\chi).
\end{align}
It remains to relate the functions in the decompositions for the phase indicators $\tilde{\chi}_{jj}$, i.e., to show that instead of the twelve functions from \eqref{eq:functions1}, \eqref{eq:functions2} it suffices to consider six functions as claimed in the statement.
To this end, we recall the trace constraint 
\begin{align*}
\tilde{\chi}_{11} + \tilde{\chi}_{22} + \tilde{\chi}_{33} = \eta_1 + \eta_2 + \eta_3 - \tr(\bar{e})=0,
\end{align*}
since $\tr(J_j)=\kappa+2\eta_1=\eta_1+\eta_2+\eta_3$ and $\bar{e} \in\mathcal{E}$.
In particular, applying $\chi_{b_{ij},\mu,\mu_1}(D)$ to the previous equation, we obtain that for $i,j \in \{1,2,3\}$, $i\neq j$,
\begin{align*}
\F\tilde{f}_{ij}^{(i)} +\F \tilde{f}_{ij}^{(j)} = -\chi_{b_{ij},\mu,\mu_1}\F\tilde{\sigma}_{\ell \ell} ,\quad
\F\tilde{g}_{ij}^{(i)} + \F\tilde{g}_{ij}^{(j)} = -\chi_{b_{ji},\mu,\mu_1} \F\tilde{\sigma}_{\ell \ell},
\end{align*}
for $\ell\neq i,j$. 
As a consequence, modifying the error terms $\tilde{\sigma}_{jj}$ appropriately, up to an error bound of the form \eqref{eq:error_prelim}, we may set for $i<j$
\begin{align}
\label{eq:functions3}
\F f_{ij} :=\F \tilde{f}_{ij}^{(i)}, \quad
\F g_{ij} :=\F \tilde{g}_{ij}^{(i)}, \quad \mbox{for } i,j \in \{1,2,3\},
\end{align}
and
\begin{align*}
\F\sigma_{11}&:= \F\tilde{\sigma}_{11},\\
\F \sigma_{22}&:= \F \tilde{\sigma}_{22} - \left(\chi_{b_{12},\mu,\mu_1} + \chi_{b_{21},\mu,\mu_1} \right)\F \tilde{\sigma}_{33},\\
\F \sigma_{33}&:= \F \tilde{\sigma}_{33} - \left(\chi_{b_{13},\mu,\mu_1} + \chi_{b_{31},\mu,\mu_1} \right)\F \tilde{\sigma}_{22}- \left(\chi_{b_{23},\mu,\mu_1}+ \chi_{b_{32},\mu,\mu_1} \right)
\F \tilde{\sigma}_{11},
\end{align*}
which yields the desired decomposition.
In particular, an energy estimate of the form \eqref{eq:error_prelim} remains valid.
\end{proof}

In order to prove the desired rigidity estimate of Theorem \ref{thm:rigidity}, we argue in parallel to the qualitative argument, and iteratively reduce the supports of the functions $f_{ij}$, $g_{ij}$ at a controlled energetic cost by using the structure of the wells. Here, in particular, a ``determinedness'' which results from the ellipticity of the problem will play a major role. We present the crucial step for such an iterative support reduction argument in the following section. 
As a preparation for this, in order to efficiently use the determinedness
property as in the stress-free setting, we will isolate certain directions in
frequency space. To this end, we consider finite differences and show that
energetically these allow to isolate only certain waves out of the six possible ones
from Lemma \ref{lem:decomp}. More precisely, for a fixed vector $v\in \R^3$ we
will denote the finite difference operator of step-size $h$ in direction $v \in \mathbb{S}^2$ by
\begin{align}
\label{eq:finite_diff}
\p_v^h u(x):= u(x + h v) - u(x).
\end{align}
If $v= e_j$ for some coordinate direction $e_j$, we also use the notation $\p_j^h$.

\begin{lem}[Finite difference control]
\label{lem:finite_diff}
Let $\chi\in BV(\T^3;K_3)$, $\bar{e}\in\mathcal{E}$ and $\tilde\chi=\chi-\bar{e}$, let $E_{el}(\chi)$ be as in \eqref{eq:el_chi} and let $b_{ij}$ and $\mathcal{B}$ be as in \eqref{eq:normals}.
Then there exist a constant $\mu_0=\mu_0(\mathcal{B})\in(0,1)$ and a universal constant $C>0$
such that for every $0<\mu<\mu_0$, $\tilde\mu>1$ and $h \in (0, \tilde\mu^{-1}\mu^{-1})$ and for $\ell,i,j\in\{1,2,3\}$, $\ell \neq i, j \neq \ell, j\neq i$,
\begin{align*}
\sum\limits_{k \in \Z^3 } |\mathcal{F}(\partial_{\ell}^h \chi_{b_{ij},\mu,\tilde\mu}(D)\tilde{\chi}_{ii})|^2  +
\sum\limits_{k \in \Z^3 } |\mathcal{F}(\partial_{\ell}^h \chi_{b_{ij},\mu,\tilde\mu}(D)\tilde{\chi}_{jj})|^2 
\leq C\mu^{-2}E_{el}(\chi).
\end{align*}
\end{lem}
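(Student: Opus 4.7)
The plan is to work entirely in Fourier space via Parseval's identity on $\T^3$. Setting $F_r := \chi_{b_{ij},\mu,\tilde\mu}(D)\tilde\chi_{rr}$ for $r\in\{i,j\}$, the Fourier symbol of $\partial_\ell^h F_r$ at $k\in\Z^3$ is $(e^{2\pi i h k_\ell}-1)\chi_{b_{ij},\mu,\tilde\mu}(k)\,\F\tilde\chi_{rr}(k)$. The goal is to establish the pointwise multiplier estimate
\[
|e^{2\pi i h k_\ell}-1|^2\, \chi_{b_{ij},\mu,\tilde\mu}(k)^2 \leq C\mu^{-2}\dist^2(\hat k, \mathcal{B}_r), \qquad r\in\{i,j\},
\]
for all $k\in\Z^3$. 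Multiplying by $|\F\tilde\chi_{rr}(k)|^2$, summing in $k$, and invoking Lemma \ref{lem:elast2} then yields the claim at once. Only $k$ in $\supp \chi_{b_{ij},\mu,\tilde\mu}\subset C_{b_{ij},2\mu,2\tilde\mu}$ have to be considered.

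The key geometric input, read off from the explicit formulas in \eqref{eq:sym_rank_one}, is that for any distinct triple $\{i,j,\ell\}=\{1,2,3\}$ one has $b_{ij}\cdot e_\ell = b_{ji}\cdot e_\ell = 0$ and $\{b_{ij}, b_{ji}\}\subset \mathcal{B}_i\cap\mathcal{B}_j$. Writing $\hat k = \alpha b_{ij} + w$ with $w\perp b_{ij}$ and $\alpha = \hat k \cdot b_{ij}$, the cone condition becomes $|w|^2 = 1-\alpha^2 \leq 4\mu^2$, and orthogonality gives $|\hat k\cdot e_\ell| = |w\cdot e_\ell| \leq |w| \leq 2\mu$, so $|k_\ell|\leq |k|\,|\hat k\cdot e_\ell|\leq 4\tilde\mu\mu$.

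Next I would relate $|\hat k\cdot e_\ell|^2$ to $\dist^2(\hat k,\mathcal{B}_r)$ via a short case distinction on the sign of $\alpha$. When $\alpha > 0$, $\hat k$ lies close to $b_{ij}$; choosing $\mu_0 = \mu_0(\mathcal{B})$ small enough that the other three vectors in $\mathcal{B}_r$ stand at universal positive distance from $b_{ij}$ (an elementary check from \eqref{eq:sym_rank_one}, where all such inter-distances equal $1$ or $\sqrt 2$), the minimum in $\dist(\hat k,\mathcal{B}_r)$ is attained at $b_{ij}$ and one obtains $\dist^2(\hat k,\mathcal{B}_r) = 2(1-\alpha) \geq (1-\alpha)(1+\alpha) = |w|^2 \geq |\hat k\cdot e_\ell|^2$. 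When $\alpha < 0$, $\hat k$ lies close to $-b_{ij}\notin\mathcal{B}_r$, so $\dist(\hat k,\mathcal{B}_r)$ is bounded below by a universal positive constant while $|\hat k\cdot e_\ell|\leq 2\mu$; hence $|\hat k\cdot e_\ell|^2 \leq C\dist^2(\hat k,\mathcal{B}_r)$ here as well.

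Combining these two ingredients with the elementary bound $|e^{2\pi i h k_\ell}-1|^2 \leq 4\pi^2 h^2 k_\ell^2 \leq C h^2 \tilde\mu^2\, \dist^2(\hat k,\mathcal{B}_r)$ on the support and using the hypothesis $h<\tilde\mu^{-1}\mu^{-1}$ to absorb $h^2\tilde\mu^2\leq \mu^{-2}$ delivers the required multiplier estimate. I expect the only (mild) obstacle to be the sign case analysis, forced by the fact that the cone $C_{b_{ij},\mu,\tilde\mu}$ is symmetric under $k\mapsto -k$ whereas $\mathcal{B}_r$ is not closed under reflection through the origin; the remainder of the argument is a direct computation.
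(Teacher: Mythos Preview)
Your proposal is correct and follows essentially the same strategy as the paper's proof: pass to Fourier space, use $|e^{2\pi i h k_\ell}-1|\leq 2\pi h|k_\ell|$, exploit $b_{ij}\perp e_\ell$ and the cone localization to bound $|k_\ell|$, and feed the resulting pointwise multiplier inequality into Lemma~\ref{lem:elast2}. Your explicit case distinction on the sign of $\alpha=\hat k\cdot b_{ij}$ is in fact slightly more careful than the paper, which writes the single inequality $|\hat k - b_{ij}|\,\chi_{b_{ij},\mu,\tilde\mu}(k)\leq \dist(\hat k,\mathcal{B}_i)\,\chi_{b_{ij},\mu,\tilde\mu}(k)$; this is literally true only on the half of the cone near $+b_{ij}$, while on the half near $-b_{ij}$ it holds merely up to a universal constant---precisely the point you isolate and justify.
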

\begin{proof}
We only prove the claim for the first term on the left hand side, the proof for the second one follows analogously.
We begin by observing that
\begin{align*}
\sum\limits_{k \in \Z^3 } |\mathcal{F}(\p_{\ell}^h \chi_{b_{ij}, \mu,\tilde\mu}(D) \tilde{\chi}_{ii}) |^2 
& = \sum\limits_{k \in \Z^3 } | (e^{2\pi ik_{\ell} h}-1) \chi_{b_{ij}, \mu,\tilde\mu}(k)\F\tilde{\chi}_{ii} |^2\\
& \leq C\sum\limits_{k \in \Z^3 } | k_{\ell} h \chi_{b_{ij}, \mu,\tilde\mu}(k)\F\tilde{\chi}_{ii}|^2 .
\end{align*}
In the last line, we used that $|1-e^{2\pi ik_\ell h}|^2=2-2\cos(2\pi k_\ell h)\le4\pi^2|k_\ell h|^2$ for every $k$ and $h$.
Relying on this bound, 
together with the condition that $|k|\leq 2\tilde\mu$,
$|\hat{k}_{\ell}|\leq|\hat{k}-b_{ij}|$ (since $b_{ij}\perp e_l$) and that $|\hat{k}-b_{ij}|\chi_{b_{ij},\mu,\tilde\mu}(k)\leq \dist(\hat{k}, \mathcal{B}_i)\chi_{b_{ij},\mu,\tilde\mu}(k)$ for sufficiently small $\mu_0$ (depending on $\mathcal{B}$), we hence obtain
\begin{align*}
\sum\limits_{k \in \Z^3 } | \mathcal{F}(\p_{\ell}^h \chi_{b_{ij}, \mu,\tilde\mu}(D) \tilde{\chi}_{ii} )|^2 
& \leq C\sum\limits_{k \in \Z^3 } | k_{\ell} h \chi_{b_{ij}, \mu,\tilde\mu}(k) \F\tilde{\chi}_{ii} |^2 \\
& \leq Ch^2 \tilde\mu^2 \sum\limits_{k \in \Z^3 } | \dist(\hat{k}, \mathcal{B}_i) \chi_{b_{ij}, \mu,\tilde\mu}(k) \F\tilde{\chi}_{ii} |^2 \\
&\leq C \mu^{-2} E_{el}(\chi),
\end{align*}
where we invoked Lemma \ref{lem:elast2} and the bound for $h$ in the last step.
\end{proof}

As a final ingredient for applying the ellipticity of the wells, i.e., the absence of rank-one connections, we make use of a nonlinear reduction argument which allows us to separate relevant from irrelevant nonlinear contributions.

\begin{lem}[Nonlinear reduction argument, I]
\label{cor:product}
Let $d_1 \in \N$, $A>0$ a universal constant and
for $\ell \in \{1,\dots,d_1\}$ let $G_{\ell}: \T^3 \rightarrow \R$ be given by $m_{\ell}(D) \tilde{\chi}_{j_\ell j_\ell}$ for some
$j_\ell\in\{1,2,3\}$ and some $m_{\ell}\in\mathcal{M}_A$ and $\tilde{\chi}$ as above.
Assume that $g: \T^3 \rightarrow \R$ satisfies the bound
\begin{align}
\label{eq:g_apriori}
\|g\|_{L^2(\T^3)}^2 \leq E \mbox{ and } \|g\|_{L^p(\T^3)} \leq C_1 \max\{p,(p-1)^{-1}\}^{18}
\end{align}
for some constants $E,C_1 >0$ and for all $p\in (1,\infty)$.
Then, for any $\gamma \in (0,1)$ there exists a constant $C(\gamma, d_1, C_1) \in (0, \frac{(Cd_1)^{2d_1}}{\gamma^{36 d_1}})$ such that for $r\in \{1,2,3\}$ and for every $h>0$ there holds
\begin{align*}
  &\int\limits_{\T^3}\left| \p_{r}^h g \prod\limits_{\ell=1}^{d_1} G_{\ell} \right|^2 dx  \leq C(\gamma, d_1,C_1,A) E^{1-\gamma}.
\end{align*}
\end{lem}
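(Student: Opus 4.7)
The plan is to combine Hölder's inequality with $L^p$-interpolation: I isolate $\partial_r^h g$ in a single $L^q$-factor and bound the product of the $G_\ell$'s by the uniform multiplier estimate from Lemma~\ref{lem:mult-thm}. The hypothesis $\|g\|_{L^2}^2\leq E$ then provides the small factor $E^{1-\gamma}$ via interpolation with the $L^{p_0}$-bound for $g$, which is uniform in $E$.

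First, for a parameter $q>2$ to be fixed later, I apply Hölder's inequality with exponents $q/2$ and $q/(q-2)$, and then a further Hölder on the product with $d_1$ equal factors, to obtain
\begin{align*}
\int_{\T^3} \Bigl|\partial_r^h g\prod_{\ell=1}^{d_1} G_\ell\Bigr|^2 dx
\leq \|\partial_r^h g\|_{L^q}^2 \prod_{\ell=1}^{d_1} \|G_\ell\|_{L^{2d_1 q/(q-2)}}^2.
\end{align*}
Since $m_\ell\in\mathcal{M}_A$ and $\tilde{\chi}_{j_\ell j_\ell}\in L^\infty(\T^3)$ is bounded by a constant depending only on the $\eta_i$'s and $\bar{e}$, Lemma~\ref{lem:mult-thm} yields, for every $r\in(1,\infty)$,
\begin{align*}
\|G_\ell\|_{L^r}\leq C A\max\{r,(r-1)^{-1}\}^{18}.
\end{align*}

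For the remaining factor I use Riesz--Thorin interpolation: for any $p_0 > 2$ and $\theta\in(0,1)$ with $1/q=\theta/2+(1-\theta)/p_0$, log-convexity of $L^p$ norms combined with $\|\partial_r^h g\|_{L^p}\leq 2\|g\|_{L^p}$ and the hypothesis \eqref{eq:g_apriori} yields
\begin{align*}
\|\partial_r^h g\|_{L^q}^2 \leq 4 E^\theta\bigl(C_1\max\{p_0,(p_0-1)^{-1}\}^{18}\bigr)^{2(1-\theta)}.
\end{align*}
Setting $\theta=1-\gamma$ and, for concreteness, $p_0=4$ gives $q=4/(2-\gamma)\in(2,4)$ and $2d_1 q/(q-2)=4d_1/\gamma$. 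Substituting these choices into the three displays above produces the claimed inequality with a constant of order $(C_1 4^{18})^{2\gamma}(CA)^{2 d_1}(4d_1/\gamma)^{36 d_1}$, which matches the announced $(\gamma, d_1)$-dependence $(Cd_1)^{2d_1}/\gamma^{36d_1}$.

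The only delicate point is that driving $\theta$ close to one (to approach the power $E^1$) forces $q$ down towards $2$, which in turn sends the auxiliary exponent $2d_1 q/(q-2)$ to infinity; meanwhile the Marcinkiewicz multiplier constants $\max\{r,(r-1)^{-1}\}^{18}$ grow like $r^{18}$. The factor $\gamma^{-36 d_1}$ in the final constant is precisely the combinatorial cost of this trade-off, and every other step is a routine application of Hölder's inequality, interpolation, and the transferred Marcinkiewicz multiplier bound.
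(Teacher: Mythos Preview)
Your proof is correct and follows essentially the same strategy as the paper: split via H\"older into a factor $\|\partial_r^h g\|_{L^q}^2$ and a product factor, interpolate the first between $L^2$ and a higher $L^p$ to extract $E^{1-\gamma}$, and bound the product via the Marcinkiewicz estimate. The only cosmetic difference is the choice of exponents---the paper takes $q=2+2\gamma$ and interpolates against $L^{(2+2\gamma)/\gamma}$, whereas you take $q=4/(2-\gamma)$ and interpolate against $L^4$---but the resulting constants have the same $\gamma^{-36d_1}$ blow-up and the argument is otherwise identical.
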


\begin{proof}
Without loss of generality, $r=2$. We apply Hölder's inequality with powers $1+\gamma$ and $\frac{1+\gamma}{\gamma}$ and afterwards $L^p$ interpolation. With this we obtain
\begin{align*}
\int\limits_{\T^3}|  \p_2^h g \prod\limits_{\ell=1}^{d_1} G_{\ell} |^2 dx
&\leq \| \p_2^h g\|_{L^{2+2\gamma}}^2 \Big\|\prod\limits_{\ell=1}^{d_1} G_{\ell} \Big\|_{L^{\frac{2+2\gamma}{\gamma}}}^2\\
& \lesssim \| \p_2^h g\|_{L^{2}}^{2(1-\gamma)} \| \p_2^h g\|_{L^{\frac{2+2\gamma}{\gamma}}}^{2\gamma} \Big\|\prod\limits_{\ell=1}^{d_1} G_{\ell} \Big\|_{L^{\frac{2+2\gamma}{\gamma}}}^2\\
& \lesssim E^{1-\gamma} \|   g\|_{L^{\frac{2+2\gamma}{\gamma}}}^{2\gamma} \Big\|\prod\limits_{\ell=1}^{d_1} G_{\ell} \Big\|_{L^{\frac{2+2\gamma}{\gamma}}}^2\\
& \leq C(\gamma,d_1,A) C_1 \max\{ \frac{2+2\gamma}{\gamma}, \frac{\gamma}{2+\gamma} \}^{36 \gamma} E ^{1-\gamma}.
\end{align*}
Here we have estimated the $L^2$ and $L^p$ norms by the assumption \eqref{eq:g_apriori} where we used the triangle inequality to estimate $\|\p_2^h g\|_{L^p} \leq 2 \|g\|_{L^p}$.
Moreover, we further estimate
\begin{align*}
\|   g\|_{L^{\frac{2+2\gamma}{\gamma}}}^{2\gamma} 
\leq  C_1 \max\{ \frac{2+2\gamma}{\gamma}, \frac{\gamma}{2+\gamma} \}^{36 \gamma}
\leq  \left(\frac{C_1 (2+2\gamma)}{\gamma} \right)^{36\gamma},
\end{align*}
the latter being bounded by a uniform constant as $\gamma \in (0,1)$.
For the contributions involving $G_{\ell}$, we applied Hölder's inequality and the assumed Marcinkiewicz bound
\begin{align*}
\Big\|\prod\limits_{\ell=1}^{d_1} G_{\ell} \Big\|_{L^{\frac{2+2\gamma}{\gamma}}}^2 \leq \prod\limits_{\ell=1}^{d_1}  \|G_{\ell} \|_{L^{\frac{2+2\gamma}{\gamma}d_1}}^2
\leq A \left(C \frac{2+2\gamma}{\gamma}d_1\right)^{36 d_1} \prod\limits_{\ell=1}^{d_1}  \|\tilde{\chi}_{j_\ell j_\ell}\|_{L^{\frac{2+2\gamma}{\gamma}d_1}}^2\lesssim  C(\gamma,d_1,A).
\end{align*}
\end{proof}

In particular, we record the following variant of Lemma \ref{cor:product}.

\begin{lem}[Nonlinear reduction argument, II]
\label{eq:product_nonline}
Let $\chi\in BV(\T^3;K_3)$, $\bar{e}\in\mathcal{E}$ and $\tilde\chi=\chi-\bar{e}$, let $E_{el}(\chi)$ be as in \eqref{eq:el_chi} and let $b_{ij}$ and $\mathcal{B}$ be as in \eqref{eq:normals}.
Let $\frac{1}{2}\le\tilde\mu$, $0<\mu<\mu_0$ with $\mu_0$ as in the statement of Lemma \ref{lem:finite_diff}.
Let $i,j\in\{1,2,3\}$, $i< j$, 
and let $f_{ij}, g_{ij}$ denote the functions from Lemma \ref{lem:decomp}.
Let $d_1 \in \N$, $A>0$ be a universal constant and for $\ell \in \{1,\dots,d_1\}$ let $G_{\ell}: \T^3 \rightarrow \R$ be given by $m_{\ell}(D) \tilde{\chi}_{j_\ell j_\ell}$ for some $j_\ell\in\{1,2,3\}$ and some
$m_\ell\in\mathcal{M}_A$.
Then, for any $\gamma \in (0,1)$ there exists a constant $C(\gamma, d_1) \in (0, \frac{(Cd_1)^{2d_1}}{\gamma^{36 d_1}})$ such that for $r\in \{1,2,3\}\setminus\{i,j\}$
\begin{align*}
\int\limits_{\T^3}\left| \p_{r}^h \chi_{b_{ij},\mu,\tilde\mu}(D) f_{ij} \prod\limits_{\ell=1}^{d_1} G_{\ell} \right|^2 dx + \int\limits_{\T^3}\left| \p_{r}^h \chi_{b_{ji},\mu,\tilde\mu}(D) g_{ij} \prod\limits_{\ell=1}^{d_1} G_{\ell} \right|^2 dx \leq C(\gamma, d_1, A) (\mu^{-2}  E_{el}(\chi))^{1-\gamma}.
\end{align*}
\end{lem}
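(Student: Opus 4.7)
The strategy is to run the proof of Lemma \ref{cor:product} verbatim, but with one crucial modification: in place of the trivial estimate $\|\p_r^h g\|_{L^2}\leq 2\|g\|_{L^2}$ (which in our setting would only yield the useless bound by a universal constant), we substitute the sharper $L^2$ bound from Lemma \ref{lem:finite_diff}. This substitution is available precisely because $r\in\{1,2,3\}\setminus\{i,j\}$ forces the perpendicularity $e_r\perp b_{ij}$ (and likewise $e_r\perp b_{ji}$, which will cover the second integral). Focusing on the first integral and setting $g:=\chi_{b_{ij},\mu,\tilde\mu}(D)f_{ij}$, we recall from the construction in the proof of Lemma \ref{lem:decomp} that $f_{ij}=\chi_{b_{ij},\mu,\mu_1}(D)\tilde\chi_{ii}$, so that $g=m(D)\tilde\chi_{ii}$ with $m:=\chi_{b_{ij},\mu,\tilde\mu}\cdot\chi_{b_{ij},\mu,\mu_1}$ a composite multiplier lying in $\mathcal{M}_{CA^2}$ (via the product rule on the Marcinkiewicz seminorms) and pointwise dominated by $\chi_{b_{ij},\mu,\tilde\mu}$.

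By Plancherel together with this pointwise domination, and using Lemma \ref{lem:finite_diff} (whose step-size requirement $h\in(0,\tilde\mu^{-1}\mu^{-1})$ is understood to hold at the iteration stage in which the lemma is applied), one has
\begin{align*}
\|\p_r^h g\|_{L^2}^2\leq \|\p_r^h\chi_{b_{ij},\mu,\tilde\mu}(D)\tilde\chi_{ii}\|_{L^2}^2\leq C\mu^{-2}E_{el}(\chi).
\end{align*}
Simultaneously, the Marcinkiewicz bound of Lemma \ref{lem:mult-thm} applied to $m\in\mathcal{M}_{CA^2}$, combined with the uniform bound $\|\tilde\chi_{ii}\|_{L^\infty}\leq C$, furnishes $\|g\|_{L^p}\leq C_A\max\{p,(p-1)^{-1}\}^{18}$ for every $p\in(1,\infty)$, which is precisely the $L^p$ hypothesis of Lemma \ref{cor:product}.

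With these two inputs in hand, the derivation then proceeds exactly as in the proof of Lemma \ref{cor:product}: Hölder's inequality with conjugate exponents $1+\gamma$ and $\frac{1+\gamma}{\gamma}$, followed by the interpolation $\|\p_r^h g\|_{L^{2+2\gamma}}^2\leq \|\p_r^h g\|_{L^2}^{2(1-\gamma)}\|\p_r^h g\|_{L^{(2+2\gamma)/\gamma}}^{2\gamma}$, produces the factor $(C\mu^{-2}E_{el}(\chi))^{1-\gamma}$ once our improved $L^2$ bound is inserted. The remaining $L^{(2+2\gamma)/\gamma}$-type norms of $\p_r^h g$ and of $\prod_\ell G_\ell$ are absorbed into the constant $C(\gamma,d_1,A)$ by Hölder's inequality and a $d_1$-fold application of Marcinkiewicz, exactly as in Lemma \ref{cor:product}'s proof. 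The companion integral involving $g_{ij}$ and $\chi_{b_{ji},\mu,\tilde\mu}(D)$ is handled by the same argument with $b_{ji}$ in place of $b_{ij}$, using $e_r\perp b_{ji}$. The only nontrivial bookkeeping is verifying that the composite cutoff $\chi_{b_{ij},\mu,\tilde\mu}\cdot\chi_{b_{ij},\mu,\mu_1}$ inherits both the cone-decay estimate of Lemma \ref{lem:finite_diff} and the Marcinkiewicz derivative bound; both follow immediately from the product rule and the pointwise bound $|\chi_{b_{ij},\mu,\mu_1}|\leq 1$, and constitute the main (essentially trivial) obstacle of the proof.
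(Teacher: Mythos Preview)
Your proposal is correct and follows essentially the same approach as the paper: both set $g=\chi_{b_{ij},\mu,\tilde\mu}(D)f_{ij}$, obtain the $L^2$ bound on $\p_r^h g$ from Lemma~\ref{lem:finite_diff} and the $L^p$ bounds from the Marcinkiewicz theorem, and then run the Hölder/interpolation argument of Lemma~\ref{cor:product}. If anything, you are more explicit than the paper about the fact that one must re-run the proof of Lemma~\ref{cor:product} (replacing the trivial bound $\|\p_r^h g\|_{L^2}\le 2\|g\|_{L^2}$ by the Lemma~\ref{lem:finite_diff} estimate) rather than invoke it as a black box, and about the implicit step-size constraint $h\in(0,\tilde\mu^{-1}\mu^{-1})$; both observations are accurate and fill in details the paper leaves to the reader.
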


In Section \ref{sec:lower}, we will apply this lemma with $\prod_{\ell} G_{\ell}$
corresponding to a polynomial in terms of different frequency-localizations of $\tilde{\chi}_{ii}$.

\begin{proof}
We apply Lemma \ref{cor:product} with $g = \p_r^h\chi_{b_{ij},\mu,\tilde\mu}(D) f_{ij} $ and $g = \p_{r}^h \chi_{b_{ji},\mu,\tilde\mu}(D) g_{ij}$. The $L^2$ estimates follow from Lemma \ref{lem:finite_diff}.
The $L^p$ bounds, in turn, are a consequence of the fact that the multipliers $\chi_{b_{ij},\mu,\tilde\mu}(k)$ satisfy the assumptions of the Marciniewicz multiplier theorem, see \eqref{eq:marcinkiewicz}.
\end{proof}

\section{Proof of the Lower Bound in Theorems \ref{thm:rigidity} and \ref{thm:scaling}}
\label{sec:lower}

\subsection{The induction argument}

The main objective of this section is to deduce the rigidity estimate of Theorem \ref{thm:rigidity} and, building on this, the lower bound estimate from Theorem \ref{thm:scaling}.
To this end, we iteratively reduce the support of the functions from the decomposition in Lemma \ref{lem:decomp}. Compared to the argument from \cite{RT22} this involves substantial additional difficulties due to the presence of the six planar waves in the decomposition in Lemma \ref{lem:decomp}. This does not allow for an immediate ``determinedness'' argument as in \cite{RT22}. Instead we make use of the preliminary considerations from the previous section to isolate the key contributions and then to iteratively apply a determinedness result involving the nonlinear, elliptic structure of the wells.

In what follows, 
for any $\frac{1}{2}\le\tilde\mu\le\mu_1$ and $b_{ij}\in\mathcal{B}$
we define 
$\chi_{b_{ij}, \mu, [\tilde\mu, \mu_1]}$ to be equal to the function $\chi_{b_{ij}, \mu, \mu_1}\chi_{A_{\tilde\mu,\mu_1}}$ where $\chi_{A_{\tilde\mu,\mu_1}}$ denotes a smoothed out version of the characteristic function of the annulus $A_{\tilde\mu, \mu_1}:=\{k \in \R^3: \ \tilde\mu \leq |k| \leq \mu_1\}$ which equals one in $A_{\tilde\mu, \mu_1}$ and with support in the annulus $A_{\frac{\tilde\mu}{2}, 2\mu_1}$.
We also associate the Fourier multiplier $\chi_{b_{ij}, \mu, [\tilde{\mu}, \mu_1]}(D) $ with it.
As for the truncated cones from above, with slight abuse of notation, we view these annuli and their characteristic functions also as functions on $\Z^3$ without commenting on this specifically in what follows.
Using this notation, we obtain a first reduction statement for the support of
the functions from the decomposition in Lemma \ref{lem:decomp}.

In what follows, a key role will be played by the ellipticity condition \eqref{eq:elliptic}.
Indeed, this condition allows us to find polynomials $P_{ij \ell}: \R \rightarrow \R$ such that 
\begin{equation}\label{eq:poly}
\p_\ell^{h}\chi_{jj} = P_{ij \ell}(\p_{\ell}^{h} \chi_{ii})
\quad \text{for all } h\in \R.
\end{equation}

This corresponds to a ``determinedness condition''.
Indeed, the ellipticity condition \eqref{eq:elliptic} allows us to conclude that these polynomials exist and are independent of $h \in \R$.
In what follows, we will denote the degree of these polynomials by $d\in \N$. Since $\p_{\ell}^{h}\chi_{jj}$ attains only seven possible values (see also the explanations in \eqref{eq:welldifferences} below), we note that it is possible to choose $d = 6$.

\begin{lem}[Reduction of the Fourier support]
\label{lem:induction_basis}
Let $\ell, i \in\{1,2,3\}$, $\ell < i$. Let $m\in\N$, $m\ge1$, $\gamma\in(0,1)$ and let $d$ denote the maximal degree of the polynomials from \eqref{eq:poly}.
Let $E_{\epsilon}(\tilde{\chi})$ be as in \eqref{eq:e-eps}. 

Let $0<\mu<\frac{1}{2}<\mu_{m}<\mu_{1}$ and $\mu_j = (M \mu)^{j} \mu_1$ for some $M \in \N$ such that $M\mu\in(0,1)$. 
Assume that
\begin{align}
\label{eq:functions-m}
\begin{split}
\tilde{\chi}_{11} & = f_{12,m} + g_{12,m} + f_{13,m} + g_{13,m} + \sigma_{11,m},\\
\tilde{\chi}_{22} & = -f_{12,m} - g_{12,m} + f_{23,m} + g_{23,m} + \sigma_{22,m},\\
\tilde{\chi}_{33} & = -f_{13,m} - g_{13,m} - f_{23,m} - g_{23,m} + \sigma_{33,m},
\end{split}
\end{align}
with $f_{ij,m}:= \chi_{b_{ij},\mu,\mu_{m}}(D)f_{ij}$, $g_{ij,m}:= \chi_{b_{ji}, \mu, \mu_{m}}(D)g_{ij}$, and
\begin{align}
\label{eq:ind-hyp}
\sum\limits_{j=1}^{3} \int\limits_{\T^3} |\sigma_{jj,m}|^2 dx \leq \tilde{C}_{\gamma}^m \max\{ (\mu^{-2} + \mu_1^{-1} \epsilon^{-1}) E_{\epsilon}(\chi), (\mu^{-2} + \mu_1^{-1} \epsilon^{-1})^{(1-\gamma)^m} E_{\epsilon}(\chi)^{(1-\gamma)^m} \},
\end{align}
with $\tilde{C}_{\gamma} = \frac{(\tilde C d)^{2d}}{\gamma^{36d}}$.

Then, if $\tilde{C}>1$ in the definition of $\tilde{C}_{\gamma}$ is sufficiently large, it holds that
\begin{align}
\label{eq:induc}
\begin{split}
&\|\chi_{b_{\ell i}, \mu,  [\mu_{m+1}, \mu_m]}(D) f_{\ell i,m}\|^2_{L^2(\T^3)} 
+ \|\chi_{b_{i \ell}, \mu, [\mu_{m+1}, \mu_m]}(D) g_{\ell i,m}\|^2_{L^2(\T^3)} \\
&\leq \tilde{C}_{\gamma}^{m+1} \max\{ (\mu^{-2} + \mu_1^{-1} \epsilon^{-1}) E_{\epsilon}(\tilde{\chi}), (\mu^{-2} + \mu_1^{-1} \epsilon^{-1})^{(1-\gamma)^{m+1}} E_{\epsilon}(\tilde{\chi})^{(1-\gamma)^{m+1}} \}.
\end{split}
\end{align}
\end{lem}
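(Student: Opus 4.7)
The plan is to adapt the determinedness-plus-Fourier-localization strategy of \cite{RT22} to the richer six-wave decomposition of Lemma \ref{lem:decomp}. Let $r\in\{1,2,3\}\setminus\{\ell,i\}$ denote the third index. The key inputs are: the pointwise polynomial identity \eqref{eq:poly}, a polynomial of degree at most $d$ relating finite differences of two strain components; the Fourier projectors $\chi_{b_{\ell i},\mu,[\mu_{m+1},\mu_m]}(D)$ and $\chi_{b_{i\ell},\mu,[\mu_{m+1},\mu_m]}(D)$, which cut out the two target annular cones; Lemma \ref{lem:finite_diff}, which controls $\partial^h$ of waves localized transverse to the difference direction by $\mu^{-2}E_{el}(\chi)$; and the nonlinear reduction Lemma \ref{eq:product_nonline}, which bounds products of localized waves by $(\mu^{-2}E_{el}(\chi))^{1-\gamma}$ up to a combinatorial factor of order $(\tilde Cd)^{2d}/\gamma^{36d}$.

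Starting from \eqref{eq:poly} in an appropriate finite-difference direction $e_*$ and for the two strain components whose decomposition contains $f_{\ell i,m}$, I would substitute \eqref{eq:functions-m} on both sides and expand $P(u)=P'(0)u+\sum_{k=2}^d p_k u^k$. Applying $\chi_{b_{\ell i},\mu,[\mu_{m+1},\mu_m]}(D)$ to the resulting identity, the disjointness of the six base cones in $\mathcal{B}$ for $\mu<\mu_0$ annihilates every localized wave whose base direction differs from $b_{\ell i}$, leaving a schematic identity of the form
\[
\alpha\,\partial^h\chi_{b_{\ell i},\mu,[\mu_{m+1},\mu_m]}(D)f_{\ell i,m}=N+L,
\]
where $\alpha$ is a nonvanishing coefficient depending on $P'(0)$ and the sign conventions in \eqref{eq:functions-m}, $L$ gathers the linear error terms ($\partial^h$ applied to $\sigma_{jj,m}$ and to the transverse localized waves), and $N$ collects the higher-order contributions coming from $\sum_{k\geq 2}p_k(\partial^h\tilde\chi)^k$. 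The $L^2$ norm of $L$ is bounded by combining the induction hypothesis \eqref{eq:ind-hyp} on $\sigma_{jj,m}$ with Lemma \ref{lem:finite_diff}. To bound $N$, I would expand it via \eqref{eq:functions-m} into at most $5^d$ monomial products and apply Lemma \ref{eq:product_nonline} with $d_1\leq d-1$, getting for each monomial an $L^2$ bound of order $(\mu^{-2}E_{el}(\chi))^{1-\gamma}$ up to the combinatorial factor above.

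The final step is to invert the finite difference $\partial^h$ on the target annular cone. Choosing $h\sim\mu_m^{-1}$, which lies in the admissible range $(0,(\mu_m\mu)^{-1})$ of Lemma \ref{lem:finite_diff}, and using that on the Fourier support of $\chi_{b_{\ell i},\mu,[\mu_{m+1},\mu_m]}(D)f_{\ell i,m}$ the frequency component relevant to $\partial^h$ is bounded below by $c\mu_{m+1}$, the Fourier symbol of $\partial^h$ satisfies $|e^{2\pi ik_*h}-1|\gtrsim M\mu$ on this support, yielding a controlled inversion loss of order $(M\mu)^{-1}$. A symmetric argument with the projector $\chi_{b_{i\ell},\mu,[\mu_{m+1},\mu_m]}(D)$ handles $g_{\ell i,m}$. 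Combining the bounds and absorbing the constants $(M\mu)^{-2}$, the multiplicity $5^d$, and $\gamma^{-36d}$ into a sufficiently large $\tilde C$ yields \eqref{eq:induc}, after a short case split on whether $A:=(\mu^{-2}+\mu_1^{-1}\epsilon^{-1})E_\epsilon(\chi)$ exceeds $1$ or not (which handles the matching between the exponents $1-\gamma$ and $(1-\gamma)^{m+1}$). The main obstacle is the joint balancing of $h$, which must satisfy $h<(\mu_m\mu)^{-1}$ for Lemma \ref{lem:finite_diff} to apply and $h\gtrsim\mu_{m+1}^{-1}=(M\mu)^{-1}\mu_m^{-1}$ for $\partial^h$ to be invertible on the target cone; the geometric scaling $\mu_m=(M\mu)^m\mu_1$ is precisely what makes these simultaneously achievable, and the ensuing inversion loss together with the combinatorial blow-up from expanding $P$ to degree $d$ is exactly what drives the superexponential form $\tilde C_\gamma=(\tilde Cd)^{2d}/\gamma^{36d}$ of the induction constant.
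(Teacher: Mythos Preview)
Your outline contains a genuine gap in the treatment of the nonlinear term $N$, stemming from your choice of strain components. You propose to relate ``the two strain components whose decomposition contains $f_{\ell i,m}$'', i.e.\ $\tilde\chi_{\ell\ell}$ and $\tilde\chi_{ii}$, via \eqref{eq:poly}. But then the expansion of $P(\partial^h_{e_*}\tilde\chi_{ii})$ into monomials contains pure powers such as $(\partial^h_{e_*} f_{\ell i,m})^k$, and Lemma~\ref{eq:product_nonline} only applies to such a factor when the difference direction is \emph{transverse} to the cone, i.e.\ when $e_*=e_r$. Yet for your inversion step you need $e_*\in\{e_\ell,e_i\}$ (otherwise $|k_*|\lesssim \mu\mu_m$ on the target cone and $|e^{2\pi i k_* h}-1|$ is not bounded below). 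These two requirements are incompatible, so $N$ cannot be controlled by the lemmas you cite.

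The paper resolves this by using the \emph{third} component $\tilde\chi_{rr}$, which does \emph{not} contain $f_{\ell i}$, on the polynomial side: one shows $\partial^h_\ell c_{\ell i,m}\approx P(\partial^h_\ell \tilde\chi_{rr})$. After subtracting off the pieces of $\tilde\chi_{rr}$ that are either transverse (the $c_{ri}$-waves, small by Lemma~\ref{lem:finite_diff}) or small by induction ($\sigma_{rr,m}$), what remains is $P(\partial^h_\ell c_{r\ell,m})$. This term is \emph{not} small, and is not handled by projecting onto a single cone as you suggest. Instead, since $c_{r\ell,m}$ has Fourier support in cones perpendicular to $e_i$, the nonlinear function $P(\partial^h_\ell c_{r\ell,m})$ has support in the slab $\{|k_i|\le M\mu\mu_m=\mu_{m+1}\}$, which is disjoint from the portion $\{|k_i|\in[\mu_{m+1},\mu_m]\}$ of the support of $c_{\ell i,m}$. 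This slab--annulus disjointness is the geometric mechanism that actually produces the support reduction; it is absent from your argument.

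A secondary issue: for a single fixed $h$ the symbol $|e^{2\pi i k_* h}-1|$ can vanish whenever $k_* h$ hits an integer, so your pointwise lower bound $\gtrsim M\mu$ is not valid across the full annulus $|k_*|\in[c\mu_{m+1},C\mu_m]$. The paper averages in $h$ over an interval $h\in(\tfrac{1}{10\mu\mu_m},\tfrac{1}{\mu\mu_m})$ to obtain a uniform lower bound without incurring the $(M\mu)^{-2}$ loss you anticipate.
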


This lemma will play a central role in reducing the Fourier supports of the six plane waves from Lemma \ref{lem:decomp} at a controlled energetic cost. The energetic cost will be quantified in terms of the constant $\tilde{C}_{\gamma}$ and in the (possible) loss in the exponent in the energy in \eqref{eq:induc}.

\begin{proof}
We consider the case in which $\ell = 2, i=3$ and prove the claimed localization
estimates on the level of the function $c_{23,m}:= -f_{23,m} - g_{23,m}$. 
To shorten the notation, we will also write $c_{12,m}:= f_{12,m} + g_{12,m}$ and $c_{13,m}:= -f_{13,m} -g_{13,m}$.
The other cases follow analogously.

\emph{Step 1: Finite difference control.}
Denoting the finite difference operator of step-size $h>0$ in direction $2$ by $\p_2^h$ (see \eqref{eq:finite_diff}),
and choosing $h \in (0,\mu_m^{-1}\mu^{-1})$, by hypothesis \eqref{eq:ind-hyp} and applying
Lemmas \ref{lem:decomp} and \ref{lem:finite_diff} we obtain
\begin{align*}
&\sum\limits_{k\in\Z^3} |\F \p_2^{h} \tilde{\chi}_{33} -  \F \p_2^{h} c_{23,m}|^2 \leq 4 \sum\limits_{k\in\Z^3} | \F \p_2^h \chi_{b_{13}, \mu,\mu_m}(D) \tilde{\chi}_{33} |^2 \\
&\qquad + 4 \sum\limits_{k\in\Z^3} | \F \p_2^h \chi_{b_{31}, \mu,\mu_m}(D) \tilde{\chi}_{33} |^2 + 4 \sum\limits_{k\in\Z^3} |\F\p_2^h\sigma_{33,m}|^2 \\
&\qquad + 4 \sum_{k\in\Z^3} |\F((\chi_{b_{13},\mu,\mu_m}(D)+\chi_{b_{31},\mu,\mu_m}(D))\p_2^h\sigma_{22})|^2 \\
& \leq 8C \mu^{-2}   E_{el}(\chi) +  8\tilde{C}_{\gamma}^m \max\{ (\mu^{-2} + \mu_1^{-1} \epsilon^{-1}) E_{\epsilon}(\chi), (\mu^{-2} + \mu_1^{-1} \epsilon^{-1})^{(1-\gamma)^m} E_{\epsilon}(\chi)^{(1-\gamma)^m} \}\\
& \qquad +8C(\mu^{-2}+\epsilon^{-1}\mu_1^{-1})E_\epsilon(\chi) \\
&\leq (16C+8\tilde{C}_\gamma^m) \max\{ (\mu^{-2} + \mu_1^{-1} \epsilon^{-1}) E_{\epsilon}(\chi), (\mu^{-2} + \mu_1^{-1} \epsilon^{-1})^{(1-\gamma)^m} E_{\epsilon}(\chi)^{(1-\gamma)^m} \}.
\end{align*}
In the first inequality above we also used that, by \eqref{eq:functions2}, $\chi_{b_{13},\mu,\mu_m}(D)f_{13,m}=\chi_{b_{13},\mu,\mu_m}(D)\tilde{\chi}_{33}+\chi_{b_{13}\mu,\mu_m}(D)\sigma_{22}$, and that the analogous identity holds for $g_{13,m}$ as well.

\medskip

\emph{Step 2: Isolating relevant and irrelevant contributions.}
By determinedness 
there exists a polynomial $P_{132}$ as in \eqref{eq:poly}, i.e.,\ of degree $d\geq 1$ such that $\p_2^h \tilde{\chi}_{33} = P_{132}(\p_2^h \tilde{\chi}_{11})$. We emphasize that this polynomial does \emph{not} depend on $h \in \R$. Inserting this into the estimate from Step 1 hence yields the following bound
\begin{align}\label{eq:nonlin-est}
\begin{split}
\sum \limits_{k\in\Z^3} |\F P_{132}(\p_2^{h} \tilde{\chi}_{11}) -  \F \p_2^{h} c_{23,m}|^2 
&\le (16C+8\tilde{C}_\gamma^m) \max\{ (\mu^{-2} + \mu_1^{-1} \epsilon^{-1}) E_{\epsilon}(\chi), \\
&\qquad (\mu^{-2} + \mu_1^{-1} \epsilon^{-1})^{(1-\gamma)^m} E_{\epsilon}(\chi)^{(1-\gamma)^m} \}.
\end{split}
\end{align} 
We next seek to use the decomposition for $\tilde{\chi}_{11}$ (see \eqref{eq:functions-m}) together with the fact that, by the conical localization, there is good control on $\p_2^h c_{13,m}$. To this end, we note that by the decomposition in \eqref{eq:functions-m}, the polynomial $P_{132}(\p_2^{h} \tilde{\chi}_{11}) $ can be expanded into polynomials in $\p_2^h c_{12,m}$, $\p_2^h c_{13,m}$ and $\p_2^h \sigma_{11,m}$ including all their mixed terms: Recalling that $d$ denotes the degree of $P_{132}$, and thanks to \eqref{eq:nonlin-est} by the triangle inequality, we have
\begin{align*}
&\sum \limits_{k\in\Z^3} |\F P_{132}(\p_2^{h} c_{12,m}) -  \F \p_2^{h} c_{23,m}|^2 
 \lesssim\sum \limits_{k\in\Z^3} |\F P_{132}(\p_2^{h} \tilde\chi_{11}) -  \F \p_2^{h} c_{23,m}|^2  \\
&\quad\qquad  + \sum\limits_{k\in\Z^3} |\F P_{132}(\p_2^{h} c_{12,m}) - \F P_{132}(\p_2^{h} \tilde\chi_{11})|^2  \\
& \quad \lesssim (16C+8\tilde{C}_\gamma^m) \max\{ (\mu^{-2} + \mu_1^{-1} \epsilon^{-1}) E_{\epsilon}(\chi),(\mu^{-2} + \mu_1^{-1} \epsilon^{-1})^{(1-\gamma)^m} E_{\epsilon}(\chi)^{(1-\gamma)^m} \} \\
& \quad\qquad + C_d \sum_{d_1=1}^{d}\sum_{j'=1}^{d_1}\sum_{j=1}^{d_1-j+1} \int_{\T^3}|f_j \prod_{\ell=1}^{d_1-1} G_{\ell}^{(j,j',d_1)}|^2 ,
\end{align*}
where $f_j \in \{\p_2^h c_{13,m}, \p_2^h \sigma_{11,m}\}$ and $G_{\ell}^{(j,j',d_1)} \in \{\p_2^h c_{13,m}, \p_2^h \sigma_{11,m},\p_2^h c_{12,m} \}$. 
Here we used that $P_{132}(\p_2^{h} \tilde\chi_{11}) =  P_{132}(\p_2^{h} c_{12,m}) + P$, where $P$ is a polynomial of $f_j$, $G_{\ell}^{(j,j',d_1)}$ of the type $f_j \prod\limits_{\ell =1}^{d-j'} G_{\ell}^{(j,j',d_1)}$.
We claim that all monomials involving at least one factor of either $\p_2^h c_{13,m}$ or $\p_2^h \sigma_{11,m}$ (i.e., as those in the second line of the inequality above) can be bounded (with slight loss due to Calder\'on Zygmund estimates).

Indeed, this follows from Lemma \ref{cor:product} and (for the case $m=1$) Lemma \ref{eq:product_nonline}. We consider, as an example a product which involves $\p_2^h c_{13,m}$ at least linearly. Then, recalling that $h \in (0,\mu_m^{-1}\mu^{-1})$ by Lemma \ref{lem:finite_diff} together with Lemma \ref{cor:product} (or Lemma \ref{eq:product_nonline} if $m=1$), there exists $C(\gamma, d) \in (0, \frac{(Cd)^{2d}}{\gamma^{36d}})$, such that
\begin{align*}
\int\limits_{\T^3}| \p_2^h c_{13,m} \prod\limits_{\ell=1}^{d - j'} G_{\ell}^{(j,j',d_1)}|^2
& \leq C(\gamma,d)  \tilde{C}_{\gamma}^m \max\{ (\mu^{-2} + \mu_1^{-1} \epsilon^{-1}) E_{\epsilon}(\chi), \\
& \qquad \qquad (\mu^{-2} + \mu_1^{-1} \epsilon^{-1})^{(1-\gamma)^{m+1 } } E_{\epsilon}(\chi)^{(1-\gamma)^{m+1 }} \}.
\end{align*}
Notice that, in applying Lemmas \ref{lem:finite_diff} and \ref{eq:product_nonline} we used the fact that $c_{13,m}=(\chi_{b_{13},\mu,\mu_m}(D)+\chi_{b_{31},\mu,\mu_m}(D))\tilde\chi_{11}$. 
If $\p_2^h \sigma_{11,m}$ appears at least linearly,  we argue analogously by invoking Lemma \ref{cor:product} and \eqref{eq:ind-hyp}.
As a consequence, we obtain
\begin{align}
\label{eq:monomial_reduc}
\begin{split}
&\sum\limits_{k\in\Z^3}|\F P_{132}(\p_2^{h} c_{12,m}) -  \F \p_2^{h} c_{23,m}|^2 \\
& \quad \leq (16C+8\tilde{C}_{\gamma}^m+\tilde C\tilde{C}_{\gamma}^m) \max\{ (\mu^{-2} + \mu_1^{-1} \epsilon^{-1}) E_{\epsilon}(\chi),(\mu^{-2} + \mu_1^{-1} \epsilon^{-1})^{(1-\gamma)^{m+1}} E_{\epsilon}(\chi)^{(1-\gamma)^{m+1}} \},
\end{split}
\end{align}
where $\tilde C>0$ is a constant depending on the polynomials $P_{ijk}$ from \eqref{eq:poly}, and thus in turn on $K_3$.

\emph{Step 3: Determinedness, support reduction for $\p_2^h c_{23,m}$.}
With \eqref{eq:monomial_reduc} available, in order to infer information on the
Fourier support of $\p_2^h c_{23,m}$, we next study the Fourier support of
$P_{132}(\p_2^h c_{12,m})$. As a key observation we will note that since the 
Fourier supports of $c_{12,m}$ and $c_{23,m}$ are disjoint, the Fourier supports of $P_{132}(\p_2^h
c_{12,m})$ and $\p_2^h c_{23,m}$ 
intersect in a (relatively) small set, whose measure is controlled and quantified.
Hence, by a similar argument as in the scaling result for the Tartar square \cite{RT22}, one
can reduce the Fourier support of $c_{23,m}$.

We present the details of this argument: 
Firstly, we observe that $P_{132}(\p_2^h c_{12,m})$ has Fourier support in a (truncated) conical neighbourhood of the plane spanned by the vectors $b_{12}, b_{21}$, while $\p_2^hc_{23,m}$ has support in a truncated neighbourhood of the lines spanned by $b_{23}, b_{32}$, respectively:
\begin{align}
\label{eq:supp_disjoint1}
\begin{split}
&\supp(\F \p_2^h c_{23,m}) \subset C_{b_{23},2\mu,2\mu_{m}}\cup C_{b_{32},2\mu,2\mu_m}, \\ 
&\supp(\F P_{132}(\p_2^h c_{12,m}))\subset 
\left\{ k \in \R^3: \  |k_3| \leq M \mu \mu_m= \mu_{m+1}, \ |k|\leq M \mu_m \right\},
\end{split}
\end{align}
where $M \in \N$ depends only on the degree of $P_{132}$ and on $\mathcal{B}$.
In particular, we observe that
\begin{align*}
\supp(\F P_{132}(\p_2^h c_{12,m})) \cap  \supp(\chi_{ \{ |k_3| \in [\mu_{m+1},\mu_m] \}} \F \p_2^h c_{23,m}) = \emptyset.
\end{align*}
Hence, from \eqref{eq:supp_disjoint1} and \eqref{eq:monomial_reduc}, 
we conclude that
\begin{align}
\label{eq:support_deriv}
\begin{split}
&\sum\limits_{k\in\Z^3} | \chi_{ \{ |k_3| \in [\mu_{m+1},\mu_m] \}}(k) \F \p_2^h c_{23,m}|^2 \\
& \quad \leq (16C+8\tilde{C}_{\gamma}^m+\tilde C\tilde{C}_{\gamma}^m) \max\{ (\mu^{-2} + \mu_1^{-1} \epsilon^{-1}) E_{\epsilon}(\chi),(\mu^{-2} + \mu_1^{-1} \epsilon^{-1})^{(1-\gamma)^{m+1}} E_{\epsilon}(\chi)^{(1-\gamma)^{m+1}} \}.
\end{split}
\end{align}

\emph{Step 4: Support reduction for $c_{23,m}$.}
We claim that we can further use \eqref{eq:support_deriv} to reduce the support of $c_{23,m}$. 
Indeed, we first note that
\begin{align*}
\sum\limits_{k\in\Z^3} |  \chi_{ \{ |k_3| \in [\mu_{m+1},\mu_m] \}}(k) \F \p_2^h c_{23,m}|^2 
= \sum\limits_{k\in\Z^3} |  \chi_{ \{ |k_3| \in [\mu_{m+1},\mu_m] \}}(k)(e^{2\pi i h k_2}-1) \F c_{23,m}|^2 .
\end{align*}
We thus need to show that $|e^{2\pi i h k_2}-1|$ is, on average, bounded away from zero.
Indeed, by the definition of the conical localization \eqref{eq:Cbij} in the Fourier set under consideration $(C_{b_{23},\mu,\mu_m}\cup C_{b_{32},\mu,\mu_m}) \cap \{|k_3| \in [\mu_{m+1}, \mu_m]\}$, it holds that for $0<\mu<1$ small, $k/|k|$ is ``almost" parallel to $b_{23}=\frac{1}{\sqrt{2}}(e_2+e_3)$ or $b_{32}=\frac{1}{\sqrt{2}}(-e_2+e_3)$,
\begin{align*}
|k_2|\sim|k_3|\sim \frac{1}{\sqrt{2}}|k| \in [\frac{1}{\sqrt{2}}\mu_{m+1},\frac{1}{\sqrt{2}}\mu_m].
\end{align*}
To be precise, straightforward calculations yield, $\frac{1}{4}|k|\le|k_2|\le|k|$.
So, $|k_2|\in[\frac{\mu_{m+1}}{4},\mu_m]$.
In particular, for the choice $h\in (\frac{1}{10\mu \mu_m},
\frac{1}{\mu \mu_m})\subset (0,\frac{1}{\mu \mu_m})$
it follows that
\begin{align*}
h|k_2|\in\Big[\frac{M}{40},\frac{1}{\mu}\Big]
\end{align*}
which is at least of the order one.
Notice that, by the condition $M\mu_0<1$, the interval above is nontrivial and has length larger than $1$ for $\mu$ sufficiently small.
Thus, for $h$ in this part of $(0,\frac{1}{\mu \mu_m})$ the product $h |k_2|$ is bounded away from zero and large compared to the period length of $e^{i \cdot}$. Hence, computing an average in $h$, we obtain
\begin{align*}
&\mu \mu_m \int_{\frac{1}{10\mu \mu_m}}^{\frac{1}{\mu \mu_m}} \sum\limits_{k\in\Z^3} |  \chi_{ \{ |k_3| \in [\mu_{m+1},\mu_m] \}}(k) \F \p_2^h c_{23,m}|^2 \ dh\\
&\qquad =   \sum\limits_{k\in\Z^3} |  \chi_{ \{ |k_3| \in [\mu_{m+1},\mu_m] \}}(k) \F  c_{23,m}|^2  \fint\limits_{(\frac{1}{10\mu \mu_m},\frac{1}{\mu \mu_m})} |e^{i h k_2}-1|^2 dh \\
&\qquad \gtrsim  \sum\limits_{k\in\Z^3} |  \chi_{ \{ |k_3| \in [\mu_{m+1},\mu_m] \}}(k) \F c_{23,m}|^2.
\end{align*}
We note that by the choice of $h$ all constants can be chosen to be independent of $h,\mu_m, \mu_{m+1},\mu$.

Therefore, applying an $h$-average to the estimate
  \eqref{eq:support_deriv}, we deduce that
\begin{align}
\label{eq:directional_reduc2}
\begin{split}
&\sum\limits_{k\in\Z^3} | \chi_{ \{ |k_3| \in [\mu_{m+1},\mu_m] \}}(k) \F c_{23,m}|^2 \\
&\quad \leq (16C+8\tilde{C}_{\gamma}^m+\tilde C\tilde{C}_{\gamma}^m) \max\{ (\mu^{-2} + \mu_1^{-1} \epsilon^{-1}) E_{\epsilon}(\chi),(\mu^{-2} + \mu_1^{-1} \epsilon^{-1})^{(1-\gamma)^{m+1}} E_{\epsilon}(\chi)^{(1-\gamma)^{m+1}} \}.
\end{split}
\end{align}

\emph{Step 5: Conclusion.}
Carrying out the same argument with $\p_3^h c_{23,m}$ instead of $\p_2^h c_{23,m}$, one obtains that
\begin{align}
\label{eq:directional_reduc3}
\begin{split}
&\sum\limits_{k\in\Z^3} | \chi_{ \{ |k_2| \in [\mu_{m+1},\mu_m] \}}(k) \F c_{23,m}|^2 \\
&\quad \leq (16C+8\tilde{C}_{\gamma}^m+\tilde C\tilde{C}_{\gamma}^m) \max\{ (\mu^{-2} + \mu_1^{-1} \epsilon^{-1}) E_{\epsilon}(\chi),(\mu^{-2} + \mu_1^{-1} \epsilon^{-1})^{(1-\gamma)^{m+1}} E_{\epsilon}(\chi)^{(1-\gamma)^{m+1}} \}.
\end{split}
\end{align}
Given this control of $c_{23,m}$, we next turn to estimating the individual waves $f_{23,m}$ and $g_{23,m}$. These satisfy $c_{23,m}=-f_{23,m}-g_{23,m}$. To this end, we recall that the Fourier supports of $f_{23,m}$ and $g_{23,m}$ are contained in cones with orthogonal center directions $b_{23}$ and $b_{32}$, respectively (see Figure \ref{fig:conicallocalization}).
The intersections of these cones with the annulus with radii $\mu_{m+1}, \mu_{m}$ are disjoint and therefore $\chi_{b_{23}, \mu, [\mu_{m+1}, \mu_m]}(k) c_{23,m}$ provides control of both $\chi_{b_{23}, \mu, [\mu_{m+1}, \mu_m]}(k) \F f_{23,m}$ and $\chi_{b_{32}, \mu, [\mu_{m+1}, \mu_m]}(k) \F g_{23,m}$ individually.
Combining \eqref{eq:directional_reduc2} and \eqref{eq:directional_reduc3}, we deduce that
 \begin{align}
\label{eq:directional_reduc_combi}
\begin{split}
&\sum\limits_{k\in\Z^3} | \chi_{b_{23}, \mu, [\mu_{m+1}, \mu_m]}(k) \F f_{23,m}|^2 
+ \sum\limits_{k\in\Z^3} | \chi_{b_{32}, \mu, [\mu_{m+1}, \mu_m]}(k) \F g_{23,m}|^2 \\ 
&\, \leq 2 \sum\limits_{k\in\Z^3} | \chi_{ \{ |k_3| \in [\mu_{m+1},\mu_m] \}}(k)\chi_{ \{ |k_2| \in [\mu_{m+1},\mu_m] \}}(k) \F c_{23,m}|^2 \\
&\, \leq 2(16C+8\tilde{C}_{\gamma}^m+\tilde C\tilde{C}_{\gamma}^m) \max\{ (\mu^{-2} + \mu_1^{-1} \epsilon^{-1}) E_{\epsilon}(\chi),(\mu^{-2} + \mu_1^{-1} \epsilon^{-1})^{(1-\gamma)^{m+1}} E_{\epsilon}(\chi)^{(1-\gamma)^{m+1}} \}.
\end{split}
\end{align}
Finally, choosing the constant $\tilde C$ in the definition of $\tilde{C}_{\gamma}>1$ such that $2(16C+8\tilde{C}_{\gamma}^m+\tilde C\tilde{C}_{\gamma}^m)\le\tilde C_\gamma^{m+1}$, e.g.,\ $\tilde C\ge 68C+36$, this hence concludes the proof of the proposition.
\end{proof}

As a consequence of the previous support reduction argument, we can upgrade the plane wave decomposition from Lemma \ref{lem:decomp}. We obtain an improved decomposition with reduced Fourier support of the main contributions at a controlled energetic cost.

\begin{lem}[Induction step]
\label{lem:inductive_step}
Let $0<\mu< \frac{1}{2} <\mu_{m}<\mu_{1}$ and $\mu_j = (M \mu)^{j} \mu_1$ for some $M \in \N$ such that $M\mu<1$. Let $d\in \N$ be the degree of the polynomials from \eqref{eq:poly} in the determinedness condition.
Assume that
\begin{align*}
\tilde{\chi}_{11} & = f_{12,m} + g_{12,m} + f_{13,m} + g_{13,m} + \sigma_{11,m},\\
\tilde{\chi}_{22} & = -f_{12,m} - g_{12,m} + f_{23,m} + g_{23,m} + \sigma_{22,m},\\
\tilde{\chi}_{33} & = -f_{13,m} - g_{13,m} - f_{23,m} - g_{23,m} + \sigma_{33,m},
\end{align*}
with $f_{ij,m}:= \chi_{b_{ij},\mu,\mu_{m}}(D)f_{ij}$, $g_{ij,m}:= \chi_{b_{ji}, \mu, \mu_{m}}(D)g_{ij}$, 
and
\begin{align*}
\sum\limits_{j=1}^{3} \int\limits_{\T^3} |\sigma_{jj,m}|^2 dx \leq \bar{C}_{\gamma}^m \max\{ (\mu^{-2} + \mu_1^{-1} \epsilon^{-1}) E_{\epsilon}(\chi), (\mu^{-2} + \mu_1^{-1} \epsilon^{-1})^{(1-\gamma)^m} E_{\epsilon}(\chi)^{(1-\gamma)^m} \},
\end{align*}
with $\bar{C}_{\gamma} = \frac{(C d)^{2d}}{\gamma^{36d}}$.
Then, if the constant $C>1$ in the definition of $\bar{C}_{\gamma}$ is sufficiently large, there is a decomposition 
\begin{align}\label{eq:dec-final}
\begin{split}
\tilde{\chi}_{11} & = f_{12,m+1} + g_{12,m+1} + f_{13,m+1} + g_{13,m+1} + \sigma_{11,m+1},\\
\tilde{\chi}_{22} & = -f_{12,m+1} - g_{12,m+1} + f_{23,m+1} + g_{23,m+1} + \sigma_{22,m+1},\\
\tilde{\chi}_{33} & = -f_{13,m+1} - g_{13,m+1} - f_{23,m+1} - g_{23,m+1} + \sigma_{33,m+1},
\end{split}
\end{align}
with 
$f_{ij,m+1} = \chi_{b_{ij}, \mu, \mu_{m+1}}(D) f_{ij},\ g_{ij,m+1}= \chi_{b_{ji}, \mu, \mu_{m+1}}(D) g_{ij}$, 
and
\begin{align}\label{eq:est-final}
\begin{split}
\sum\limits_{j=1}^{3} \int\limits_{\T^3} |\sigma_{jj,m+1}|^2 dx 
& \leq \bar{C}_{\gamma}^{m+1} \max\{ (\mu^{-2} + \mu_1^{-1} \epsilon^{-1}) E_{\epsilon}(\chi),\\
& \quad \quad (\mu^{-2} + \mu_1^{-1} \epsilon^{-1})^{(1-\gamma)^{m+1}} E_{\epsilon}(\chi)^{(1-\gamma)^{m+1}} \}.
\end{split}
\end{align}
\end{lem}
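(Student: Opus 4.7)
The plan is to promote the shell-by-shell control from Lemma \ref{lem:induction_basis} into a genuine decomposition with more tightly localized plane waves, absorbing the differences $f_{ij,m}-f_{ij,m+1}$ and $g_{ij,m}-g_{ij,m+1}$ into the new error terms $\sigma_{jj,m+1}$. The iteration is therefore set up so that Lemma \ref{lem:induction_basis}, once applied for each pair $(\ell,i)\in\{(1,2),(1,3),(2,3)\}$, immediately provides the numerical content of the induction step.

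First, observe that since $\mathcal{F} f_{ij,m}=\chi_{b_{ij},\mu,\mu_m}(D)\mathcal{F} f_{ij}$ and $\mathcal{F} f_{ij,m+1}=\chi_{b_{ij},\mu,\mu_{m+1}}(D)\mathcal{F} f_{ij}$, the Fourier support of $f_{ij,m}-f_{ij,m+1}$ lies inside the truncated annular cone associated with $\chi_{b_{ij},\mu,[\mu_{m+1},\mu_m]}$ (up to the smooth cut-offs and symmetric tails, all of which are absorbed in universal constants). Hence, by Plancherel,
\begin{align*}
\|f_{ij,m}-f_{ij,m+1}\|_{L^2(\T^3)}^2 &\lesssim \|\chi_{b_{ij},\mu,[\mu_{m+1},\mu_m]}(D) f_{ij,m}\|_{L^2(\T^3)}^2,\\
\|g_{ij,m}-g_{ij,m+1}\|_{L^2(\T^3)}^2 &\lesssim \|\chi_{b_{ji},\mu,[\mu_{m+1},\mu_m]}(D) g_{ij,m}\|_{L^2(\T^3)}^2,
\end{align*}
and the right-hand sides are precisely the quantities estimated in \eqref{eq:induc} of Lemma \ref{lem:induction_basis} with the universal constant $\tilde C_\gamma$ replaced by $\bar C_\gamma$ (which is permissible upon choosing $C$ in the definition of $\bar C_\gamma$ at least as large as the corresponding $\tilde C$ in Lemma \ref{lem:induction_basis}).

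Next, rewrite each line of the decomposition at stage $m$ by adding and subtracting the promoted waves at stage $m+1$; for instance,
\begin{align*}
\tilde{\chi}_{11}&=f_{12,m+1}+g_{12,m+1}+f_{13,m+1}+g_{13,m+1}+\sigma_{11,m+1},
\end{align*}
where
\begin{align*}
\sigma_{11,m+1}:=\sigma_{11,m}+(f_{12,m}-f_{12,m+1})+(g_{12,m}-g_{12,m+1})+(f_{13,m}-f_{13,m+1})+(g_{13,m}-g_{13,m+1}),
\end{align*}
and symmetrically for $\sigma_{22,m+1}$ and $\sigma_{33,m+1}$, which yields \eqref{eq:dec-final}. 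Applying Lemma \ref{lem:induction_basis} for each of the three pairs $(\ell,i)$ and combining with the induction hypothesis on $\sigma_{jj,m}$ via the triangle inequality in $L^2$, the energy of each $\sigma_{jj,m+1}$ is bounded by a uniform constant times the maximum of
\begin{align*}
\bar C_\gamma^m\max\{(\mu^{-2}+\mu_1^{-1}\epsilon^{-1})E_\epsilon(\chi),(\mu^{-2}+\mu_1^{-1}\epsilon^{-1})^{(1-\gamma)^m}E_\epsilon(\chi)^{(1-\gamma)^m}\}
\end{align*}
and
\begin{align*}
\bar C_\gamma^{m+1}\max\{(\mu^{-2}+\mu_1^{-1}\epsilon^{-1})E_\epsilon(\chi),(\mu^{-2}+\mu_1^{-1}\epsilon^{-1})^{(1-\gamma)^{m+1}}E_\epsilon(\chi)^{(1-\gamma)^{m+1}}\}.
\end{align*}
Since $(1-\gamma)^{m+1}\le (1-\gamma)^m$ the first of these two terms is dominated by the second, so choosing $C$ in the definition of $\bar C_\gamma$ large enough (independently of $m$) to absorb the constants coming from Lemma \ref{lem:induction_basis} and from the triangle inequality yields \eqref{eq:est-final}. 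The only subtlety to watch is that the constant chosen here has to be coherent with the one selected in Lemma \ref{lem:induction_basis}; this is the point at which the hypothesis ``$C>1$ sufficiently large'' is used, and no further nonlinear estimate is required.
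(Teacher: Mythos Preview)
Your proposal is correct and follows essentially the same approach as the paper: define $\sigma_{jj,m+1}$ as $\sigma_{jj,m}$ plus the four differences $f_{ij,m}-f_{ij,m+1}$, $g_{ij,m}-g_{ij,m+1}$, note that $0\le \chi_{b,\mu,\mu_m}-\chi_{b,\mu,\mu_{m+1}}\le \chi_{b,\mu,[\mu_{m+1},\mu_m]}$ so these differences are controlled by Lemma~\ref{lem:induction_basis}, and absorb the resulting constants into $\bar C_\gamma$. The paper's proof is more terse but identical in content.
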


\begin{proof}
This follows immediately from the previous lemma. Indeed, considering $\tilde{\chi}_{11}$, this follows by, for instance, setting 
\begin{align*}
f_{12,m+1}&:= \chi_{b_{12},\mu,\mu_{m+1}}(D) f_{12}, \ g_{12,m+1}:= \chi_{b_{21},\mu,\mu_{m+1}}(D) g_{12}, \\
f_{13,m+1}&:= \chi_{b_{13},\mu,\mu_{m+1}}(D) f_{13}, \
g_{13,m+1}:= \chi_{b_{31},\mu,\mu_{m+1}}(D) g_{13},\\
 \sigma_{11,m+1}&:= \sigma_{11,m} + \big(\chi_{b_{12},\mu,\mu_m}(D)-\chi_{b_{12},\mu,\mu_{m+1}}(D)\big) f_{12} + \big(\chi_{b_{21},\mu,\mu_m}(D)-\chi_{b_{21},\mu,\mu_{m+1}}(D)\big) g_{12} \\
& \quad  + \big(\chi_{b_{13},\mu,\mu_m}(D)-\chi_{b_{13},\mu,\mu_{m+1}}(D)\big) f_{13}+ \big(\chi_{b_{31},\mu,\mu_m}(D)-\chi_{b_{31},\mu,\mu_{m+1}}(D)\big) g_{13}. 
\end{align*}
Analogous formulas can be set for the other components.
Noticing that $0\le\chi_{b,\mu,\mu_m}-\chi_{b,\mu,\mu_{m+1}}\le\chi_{b,\mu,[\mu_{m+1},\mu_m]}$ for any $b\in\mathcal{B}$, by Lemma \ref{lem:decomp} and Lemma \ref{lem:induction_basis}, this 
implies the desired decomposition and error estimates.
\end{proof}

\subsection{Proofs of the rigidity estimates of Theorems \ref{thm:rigidity} and \ref{thm:scaling}}
\label{sec:thms_proof}

With the inductive Fourier decomposition of Lemma \ref{lem:inductive_step} in place, we proceed to the proofs of Theorems \ref{thm:rigidity} and \ref{thm:scaling}. The argument here closely resembles the one in the setting of the Tartar square. We provide the details for the convenience of the reader.

\subsubsection{Proof of the lower bound in Theorem \ref{thm:rigidity}}

We first address the quantitative rigidity estimate from Theorem \ref{thm:rigidity}. The scaling result of Theorem \ref{thm:scaling} will then follow with only small modifications in the next section.

\begin{proof}[Proof of Theorem \ref{thm:rigidity}]
As in \cite{RT22} the lower bound in Theorem \ref{thm:rigidity} follows from the error estimate in Lemma \ref{lem:inductive_step} by an optimization argument. 
For the convenience of the reader, we present the arguments in detail.

We preliminary notice that, by finite induction, by Lemma \ref{lem:decomp} (being the induction base) and Lemma \ref{lem:inductive_step}, for every $m$ such that $\mu_m\ge \frac{1}{2}$, \eqref{eq:dec-final} and \eqref{eq:est-final} hold true.
We also notice that, without loss of generality we can assume that $E_\epsilon(\chi)\le1$, since otherwise there is nothing to prove.
Now, using the periodicity of $\tilde{\chi}$, we first choose $m\in \N$ minimal such that $\mu_{m+1}=(M \mu)^{m+1} \mu_1<\frac{1}{2}$.
This implies that all $f_{ij,m}=\chi_{b_{ij}, \mu, \mu_{m+1}}f_{ij}$ vanish, since the support of the truncated cone requires that $k\in \Z^3\setminus\{0\}$ with $|k|\leq 2\mu_{m+1}<1$. Therefore, there exist constants $c_j\in \R$ such that $\tilde{\chi}_{jj}-c_j=\sigma_{jj,m}$ and hence by Lemma \ref{lem:inductive_step} it holds that
\begin{align}
\label{eq:energy_est1}
\begin{split}
\|\tilde{\chi}_{jj}-c_j\|_{L^2}^{2} 
&\leq C_{\gamma}^{m} \max\{ (\mu^{-2} + \mu_1^{-1} \epsilon^{-1}) E_{\epsilon}(\chi),(\mu^{-2} + \mu_1^{-1} \epsilon^{-1})^{(1-\gamma)^{m}} E_{\epsilon}(\chi)^{(1-\gamma)^{m}} \}\\
&\leq C_{\gamma}^{m}(\mu^{-2} + \mu_1^{-1} \epsilon^{-1})^{(1-\gamma)^{m}} E_{\epsilon}(\chi)^{(1-\gamma)^{m}}.
\end{split}
\end{align}
Here $C_{\gamma} = \frac{(C d)^{2d}}{\gamma^{36d}}$.
We hence choose $m$ (up to an integer cut-off) such that
\begin{align*}
m \sim \frac{\log(\mu_1)}{|\log(M\mu)|}.
\end{align*}
Recalling that $C_{\gamma}=\frac{(C d)^{2d}}{\gamma^{36d}}$ and taking into account the maximum on the right hand side of the above energy estimate, we choose $\gamma \in (0,1)$ such that
\begin{align*}
(1-\gamma)^m = c \in (1/2,3/4).
\end{align*}
To this end, we set
\begin{align*}
\gamma \sim \frac{|\log(c)|}{m}.
\end{align*}
As a consequence, \eqref{eq:energy_est1} turns into
\begin{align}
\label{eq:energy_est2}
\begin{split}
\|\tilde{\chi}_{jj}-c_j\|_{L^2}^2 
&\leq \tilde{C}^{m+1} e^{\tilde{C} m \log(m)}  (\mu^{-2} + \mu_1^{-1} \epsilon^{-1}) E_{\epsilon}(\chi)^{\frac{1}{2}},
\end{split}
\end{align}
where we used that $0\leq E_{\epsilon}(\chi)\leq 1$ and thus $E_{\epsilon}(\chi)^c\leq
  E_{\epsilon}(\chi)^{\frac{1}{2}}$.
Next, we optimize the prefactor $\mu^{-2} + \mu_1^{-1} \epsilon^{-1}$ by setting $\mu_1 = \mu \epsilon^{-1}>1$, which, by definition of $m$, yields that
\begin{align*}
m \sim \frac{\log(\mu)-\log(\epsilon)}{|\log(\mu M)|}.
\end{align*}
Choosing $\mu \sim \epsilon^{\alpha}$ for some $\alpha \in (0,1)$, \eqref{eq:energy_est2} becomes
\begin{align}
\label{eq:energy_est3}
\begin{split}
\|\tilde{\chi}_{jj}-c_j\|_{L^2}^2 
&\leq  \exp\left({\tilde{C} \frac{(1-\alpha)|\log(\epsilon)|}{\alpha |\log(\epsilon \tilde{M})|} \log\left( \frac{(1-\alpha)|\log(\epsilon)|}{\alpha|\log(\epsilon \tilde{M})|} \right)}\right) \exp(2\alpha |\log(\epsilon)|)  E_{\epsilon}(\chi)^{\frac{1}{2}} ,
\end{split}
\end{align}
where $\tilde{M} = M^{\frac{1}{\alpha}}$.
It remains to choose $\alpha \in (0,1)$ in order to balance the two exponential contributions involving terms of $|\log(\epsilon)|$. To this end, we set $\alpha = |\log(\epsilon)|^{-\frac{1}{2}}$. Hence, for $\epsilon_0>0$ sufficiently small and $\bar{C}>0$ a sufficiently large constant (independent of $\epsilon_0, \epsilon$), \eqref{eq:energy_est3} is bounded by
\begin{align}
\label{eq:energy_est4}
\begin{split}
\|\tilde{\chi}_{jj}-c_j\|_{L^2}^2 
&\leq  \exp\left(\bar{C} |\log(\epsilon)|^{\frac{1}{2}} \log\left(|\log(\epsilon)| \right) \right) E_{\epsilon}(u,\chi)^{\frac{1}{2}}  .
\end{split}
\end{align}
In order to conclude the argument, it hence suffices to estimate the left hand side. To this end, we note that by definition of the function $\tilde{\chi}_{jj}$ and by optimality of the mean,
\begin{align}
\label{eq:mean_est}
\|\tilde{\chi}_{jj}-c_j\|_{L^2}^2  \geq \|\chi_{jj}-\langle \chi_{jj} \rangle\|_{L^2}^2, 
\end{align}
where $\langle \chi_{jj} \rangle :=  \int\limits_{\T^3} \chi_{jj} dx$ denotes the mean value of $\chi_{jj}$. Moreover, we note that
\begin{align}
\label{eq:mean_cor}
\begin{split}
\|e(u) - \bar{e}\|_{L^2}
&\leq \|e(u) - \chi\|_{L^2} + \|\chi - \bar{e}\|_{L^2}
\leq E_{el}(u,\chi)^{\frac{1}{2}} +  |\bar{e} - \langle \chi \rangle|
+ \|\chi - \langle \chi \rangle\|_{L^2}\\
&\leq 2E_{el}(u,\chi)^{\frac{1}{2}} +  \|\chi - \langle \chi \rangle\|_{L^2},
\end{split}
\end{align}
where we have used Jensen's inequality in the last estimate. Combining \eqref{eq:mean_est} and \eqref{eq:mean_cor} and using the bound \eqref{eq:energy_est4}, then implies that
\begin{align*}
\|e(u) - \bar{e}\|_{L^2} 
&\leq 2E_{el}(u,\chi)^{\frac{1}{2}}  + \exp\left(\bar{C} |\log(\epsilon)|^{\frac{1}{2}} \log\left(|\log(\epsilon)| \right) \right) E_{\epsilon}(\chi)^{\frac{1}{4}}\\
&\leq 3 \exp\left(\bar{C} |\log(\epsilon)|^{\frac{1}{2}} \log\left(|\log(\epsilon)| \right) \right) E_{\epsilon}(\chi)^{\frac{1}{4}}.
\end{align*}
Taking this to the power four yields the desired result.
\end{proof}

\subsubsection{Proof of the lower bound in Theorem \ref{thm:scaling}}

Building on the proof of Theorem \ref{thm:rigidity}, we next turn to the proof of the lower scaling bound in Theorem \ref{thm:scaling}.

\begin{proof}[Proof of the lower bound in Theorem \ref{thm:scaling}]
The lower bound in Theorem \ref{thm:scaling} follows as a direct consequence of
the bound from Theorem \ref{thm:rigidity}. Indeed, for any $\nu \in (0,1)$ there exists $\epsilon_0>0$ such that for $\epsilon \in (0,\epsilon_0)$
we have
\begin{align*}
\|\chi- \bar{e}\|_{L^2}^2
&\leq 2\|e(u)-\chi\|_{L^2}^2 + 2\|e(u)-\bar{e}\|_{L^2}^2\\
& \leq 2E_{el}(u,\chi) + 2\exp(c_{\nu} |\log(\epsilon)|^{\frac{1}{2} + \nu}) E_{\epsilon}(u,\chi)\\
&\leq 4\exp(c_{\nu} |\log(\epsilon)|^{\frac{1}{2} + \nu}) E_{\epsilon}(u,\chi)^{\frac{1}{2}}.
\end{align*}
Since $\chi \in K$, we have that $\|\chi - \bar{e}\|_{L^2} \geq \dist(K, \bar{e})>0 $. As a consequence, for $\epsilon \in (0,\epsilon_0)$
\begin{align*}
\dist^2(K, \bar{e}) \leq 2\exp(c_{\nu} |\log(\epsilon)|^{\frac{1}{2} + \nu}) E_{\epsilon}(u,\chi)^{\frac{1}{2}}.
\end{align*}
Taking the infimum over all $u \in \mathcal{A}_{\overline{e}}$, $\chi \in BV(\Omega, K)$, implies that for $\epsilon \in (0,\epsilon_0)$
\begin{align*}
\exp(-c_{\nu} |\log(\epsilon)|^{\frac{1}{2} + \nu}) \dist^4(K, \bar{e}) \leq E_{\epsilon}(\overline{e}).
\end{align*}
This concludes the argument.
\end{proof}

\subsection{Remarks on the setting with Dirichlet data}
\label{sec:Dirichlet}

Concluding the quantitative analysis of the $T_3$-structures from \cite{BFJK94}, we comment on the difference between the periodic setting and the setting of a general Lipschitz domain with Dirichlet data. In this context, the first observation is that in the Dirichlet case there exist data such that the determinedness properties which had been crucially used in the iterative support reduction arguments are not always satisfied. More precisely, a central step in our argument above was the existence of polynomials $P_{ij\ell}$ such that $\p_{\ell}^h\chi_{jj} = P_{ij\ell}(\p_{\ell}^h\chi_{ii})$ for all $h \in \R$. We claim that in the Dirichlet setting there are data such that this is not always guaranteed, when considering solutions in the admissible class
\begin{align*}
\mathcal{A}^D_{\bar{e}}:= \{u \in H^1_{loc}(\R^3, \R^3): \ u(x) = (\bar{e} + s) x + b \mbox{ for } x \in \R^3 \setminus \Omega \},
\end{align*}
for some $s \in \mbox{skew}(3)$ and $b \in \R^3$ and $\bar{e}\in\mathcal{E}$, with $\mathcal{E}$ defined as in \eqref{eq:data-sets}. 
Indeed, setting $\tilde{\chi}_{ii} = \bar{e}_{ii}$ in $\R^3 \setminus \Omega$,
for any $v\in\mathbb{S}^2$, $h>0$ we have
\begin{align}
  \label{eq:determinedvalues}
\p_k^h\tilde\chi_{ii}\in\{0,\pm(\eta_1-\eta_2),\pm(\eta_1-\eta_3),\pm(\eta_2-\eta_3),\pm(\eta_1-\bar e_{ii}),\pm(\eta_2-\bar e_{ii}),\pm(\eta_3-\bar e_{ii})\}.
\end{align}
We remark that the last six values only arise in the Dirichlet and not in the periodic setting.
In order to have well-defined polynomials such that
\[
P_{ij k}(\p_k^h\tilde\chi_{ii})=\p_k^h\tilde\chi_{jj},
\]
all the thirteen values from \eqref{eq:determinedvalues} that $\p_k^h\tilde\chi_{ii}$ can attain, should be different.

\begin{example}
\label{ex:counter}
Let $\mathcal{E}$ be defined as in \eqref{eq:data-sets}.
We claim that there are boundary values $\bar{e} \in \mathcal{E}$ 
such that the set from \eqref{eq:determinedvalues} contains less than the maximal thirteen values. In order to observe this, consider $\bar e:=\lambda_1 J_1+\lambda_2 J_2+\lambda_3 J_3$ with $\lambda_1=\frac{1}{2}$ and $\lambda_2=\lambda_3=\frac{1}{4}$.
Then $\bar e_{11}=\frac{1}{2}\kappa+\frac{1}{2}\eta_1=\frac{1}{2}(\eta_2+\eta_3)$.
The admissible choice of parameters $\eta_2=\frac{1}{3}$, $\eta_1=1$ and $\eta_3=3$ leads to
\[
\eta_1-\eta_2=\frac{2}{3}=\bar e_{11}-\eta_1.
\]
The polynomial $P_{12 k}$ should be such that
\[
P_{12 k}(\eta_1-\eta_2)=\eta_2-\eta_3, \quad P_{12k}(\bar e_{11}-\eta_1)=\bar e_{22}-\eta_2,
\]
since $\p_k^h\chi_{11}=\eta_1-\eta_2$ when $\chi(x)=e^{(2)}$ and $\chi(x+h e_k)=e^{(3)}$ and $\p_k^h\chi_{11}=\bar e_{11}-\eta_1$ when $\chi(x)=e^{(3)}$ and $\chi(x+h e_k)=\bar e$.

But, by the choices of parameters above, we have
\[
\bar e_{22}=\Big(\frac{1}{2} J_1+\frac{1}{4} J_2+\frac{1}{4} J_3\Big)_{22}=\frac{1}{2} \eta_1+\frac{1}{4}\kappa+\frac{1}{4}\eta_1=\frac{13}{3},
\]
hence,
\[
\eta_2-\eta_3=-\frac{8}{3}\neq 4=\frac{13}{3}-\frac{1}{3}=\bar e_{22}-\eta_2
\]
so in this case $P_{12k}$ is not well-defined.
\end{example}

We remark that while counterexamples as in Example \ref{ex:counter} exist for certain choices of
$\overline{e}$, generically the values in \eqref{eq:determinedvalues} are
distinct.

\begin{lem}
  \label{lem:determinedE}
There exists an exceptional set 
\begin{align*}
E \subset \R^{3 \times 3}_{diag,tr}:=\{M \in \R^{3\times 3}: \ M \mbox{ is diagonal and } \tr(M)=\eta_1+\eta_2+\eta_3\} \simeq
\R^2,
\end{align*}
such that the set from \eqref{eq:determinedvalues} consists of thirteen distinct elements if $\bar{e} \in \mathcal{E}\setminus E$. 
The set $E$ consists of a finite union of lines and is hence a Lebesgue null set
in $\R^{3 \times 3}_{diag,tr}$.
\end{lem}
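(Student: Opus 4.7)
The plan is to describe $E$ explicitly as a finite union of $1$-dimensional affine subvarieties of $\R^{3\times 3}_{diag,tr}$ by enumerating every possible coincidence between two of the thirteen values listed in \eqref{eq:determinedvalues}. As a first step, I will invoke the ellipticity condition \eqref{eq:elliptic} --- exactly as used in the proof of Lemma \ref{lem:determined} --- to conclude that the seven ``universal'' values $\{0, \pm(\eta_1-\eta_2), \pm(\eta_1-\eta_3), \pm(\eta_2-\eta_3)\}$ are pairwise distinct for \emph{every} admissible choice of $\bar{e}$. It therefore suffices to monitor when the six boundary-dependent values $\pm(\eta_j - \bar{e}_{ii})$, $j \in \{1,2,3\}$, collide among themselves or with the seven universal ones.

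Next, I will enumerate the possible collisions into three types: (i) $\eta_j - \bar{e}_{ii} = -(\eta_k - \bar{e}_{ii})$, which forces $\bar{e}_{ii} = \tfrac{1}{2}(\eta_j + \eta_k)$ (in particular $\bar{e}_{ii} = \eta_j$ when $j = k$, a case which also captures the collision with $0$); (ii) $\pm(\eta_j - \bar{e}_{ii}) = \pm(\eta_k - \eta_\ell)$ with $k \neq \ell$, which forces $\bar{e}_{ii} \in \{\eta_j + (\eta_k - \eta_\ell), \eta_j - (\eta_k - \eta_\ell)\}$; and (iii) $\eta_j - \bar{e}_{ii} = \eta_k - \bar{e}_{ii}$ for $j \neq k$, which is impossible because $\eta_1, \eta_2, \eta_3$ are distinct. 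Each equation in (i) and (ii) is an affine condition in the single unknown $\bar{e}_{ii}$, so for every $i \in \{1,2,3\}$ the excluded values of $\bar{e}_{ii}$ form a finite set $S_i \subset \R$; the exact cardinality of $S_i$ is irrelevant for the argument.

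Finally, I will translate this into geometry. Identifying $\R^{3\times 3}_{diag,tr}$ with the affine plane $\{x \in \R^3 : x_1 + x_2 + x_3 = \eta_1 + \eta_2 + \eta_3\}$, each condition $\bar{e}_{ii} = c$ cuts out a hyperplane of $\R^3$ that is transverse to the trace hyperplane and hence intersects $\R^{3\times 3}_{diag,tr}$ in a $1$-dimensional affine line. Setting
\[
E := \bigcup_{i=1}^{3} \{\bar{e} \in \R^{3\times 3}_{diag,tr} : \bar{e}_{ii} \in S_i\}
\]
therefore exhibits $E$ as a finite union of affine lines (in fact in at most three distinct direction families, one per coordinate), which is manifestly a Lebesgue null set in the ambient $2$-plane. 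I do not expect any serious obstacle here: the ellipticity-driven distinctness of the seven universal values is already contained in Lemma \ref{lem:determined}, and the remainder is a purely mechanical case analysis.
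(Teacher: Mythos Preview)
Your proposal is correct and follows essentially the same approach as the paper: split the thirteen values into the seven universal ones (distinct by \eqref{eq:elliptic}) and the six $\bar e$-dependent ones, enumerate the possible coincidences, observe that each imposes an affine condition $\bar e_{ii}=c$ for finitely many $c$, and then use transversality of the coordinate hyperplanes to the trace hyperplane to conclude that $E$ is a finite union of lines. The only cosmetic difference is organizational---the paper groups the analysis as ``$D_2$ has six distinct values'' and ``$D_1\cap D_2=\emptyset$'' rather than your collision types (i)--(iii)---and the paper notes that the finite set $\Lambda$ of excluded values is actually independent of $i$ (so your $S_1=S_2=S_3$), but this does not affect the argument.
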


\begin{proof}
  In our proof of Lemma \ref{lem:determinedE}, we consider the set from
  \eqref{eq:determinedvalues}
  \begin{align*}
  D:= \{0,\pm(\eta_1-\eta_2),\pm(\eta_1-\eta_3),\pm(\eta_2-\eta_3),\pm(\eta_1-\bar e_{ii}),\pm(\eta_2-\bar e_{ii}),\pm(\eta_3-\bar e_{ii})\}
  \end{align*}
as parametrized by $\overline{e} \in \R^{3\times
  3}_{sym, tr}\simeq \R^2$ for given $\eta_j$.
Our main aim in the following is to show that all $13$ values in the definition of $D$ are distinct unless
$\overline{e}$ lies in a small exceptional set $E$.
Indeed, we claim that the set $E$ for which $D$ has less than $13$ elements is of the form
\begin{align}
  \label{eq:exceptionalset}
  E=\big\{\overline{e} \in \R^{3\times 3}_{diag, tr}\simeq \R^2 : \overline{e}_{ii} \in \Lambda \mbox{ for some } i\in\{1,2,3\}\big\},
\end{align}
where $\Lambda\subset \R$ is a finite set.
Once \eqref{eq:exceptionalset} is proved, the claim of the lemma follows: Indeed, viewed as a subset of $\R^3$ (diagonal matrices), the set $E$ thus is contained
inside a finite union of axis-parallel hyperplanes.
Moreover, the trace constraint $\text{tr}(\overline{e})=2\eta_1+\kappa$ corresponds to a non-axis-parallel
hyperplane, which intersects these hyperplanes along lines. Hence, the set $E$
consists of a finite number of lines, while $\bar e$ can be chosen from a
two-dimensional (relatively) open set. Therefore, the exceptional set is a
Lebesgue null set and generically $D$ contains $13$ values.

We provide the proof of \eqref{eq:exceptionalset}. To this end, we first consider the subset $D_1 \subset D$ of values generated by differences of the wells:
  \begin{align}
  \label{eq:welldifferences}
  D_1:=\{0,\pm(\eta_1-\eta_2),\pm(\eta_1-\eta_3),\pm(\eta_2-\eta_3)\}.
\end{align}
These seven values are distinct due to the ellipticity condition
\eqref{eq:elliptic} and are independent of $\overline{e}$.
We next show that the set $D_2$ of differences between $\overline{e}$ and the $i$-th component of the wells
$e^{(j)}$, $j\in\{1,2,3\}$,
\begin{align*}
  D_2:=\{\pm(\eta_1-\bar e_{ii}),\pm(\eta_2-\bar e_{ii}),\pm(\eta_3-\bar e_{ii})\},
\end{align*}
generically consists of six distinct values and that $D_2$ and $D_1$ are disjoint.

Concerning the distinct values, we note that
\begin{align*}
  \eta_j-\bar e_{ii}= \eta_k-\bar e_{ii} \Leftrightarrow \eta_j = \eta_k
\end{align*}
and all $\eta_{j}$ are distinct by assumption.
For the case of opposite signs, we instead observe that
\begin{align*}
   \eta_j-\bar e_{ii}= -(\eta_k-\bar e_{ii}) \Leftrightarrow \bar e_{ii} = \frac{\eta_{j}+\eta_{k}}{2},
\end{align*}
which is of the form claimed in \eqref{eq:exceptionalset}.

It remains to show that generically the sets $D_2$ and $D_1$ are disjoint.
Indeed, the set where $D_1\cap D_2 \neq \emptyset$ can be explicitly computed as
\begin{align*}
  \begin{split}
  \{\overline{e} \in \R^{3\times 3}_{sym, tr}\simeq \R^2 : \overline{e}_{ii} - \eta_l \in D_1 \text{ for some } i,l \in \{1,2,3\}\}
  &=\bigcup_{\lambda \in \tilde{\Lambda} } \{\overline{e} \in \R^{3\times 3}_{diag, tr}\simeq \R^2 :\overline{e}_{ii} = \lambda \},
  \end{split}
\end{align*}
where $\tilde{\Lambda}= \bigcup_{l} \eta_l +D_1$. This set is of the form
\eqref{eq:exceptionalset}, which concludes the proof.
\end{proof}

In the discussion of the quantitative lower bounds in the previous sections, we avoided the additional constraint encoded in Lemma \ref{lem:determinedE} by considering the
periodic setting, in which only the set from \eqref{eq:welldifferences} has to be considered. By the ellipticity assumption this always consists of disjoint elements.

The preceding observations from Lemma \ref{lem:determinedE} show that Theorems \ref{thm:rigidity} and \ref{thm:scaling} have direct analogues also in the Dirichlet setting in case that one considers Dirichlet data outside of the exceptional set $E$.
Indeed, by arguing exactly as in the above argument and using in addition the low frequency estimate in the final step in which the support is reduced to the zero frequency from \cite[Proposition 3.1(i)]{RRTT23}, for instance, we infer the following result:

\begin{thm}[Rigidity]
\label{thm:rigidity_Dirichlet}
Let $\bar{e} \in \R^{3\times 3}_{sym}$ be such that $\bar{e}\in \mathcal{E}\setminus E$. 
Let $E_{\epsilon}(u,\chi)$ be as in \eqref{eq:energy_total} with $\Omega \subset \R^3$ an arbitrary bounded Lipschitz domain and assume that $u \in \mathcal{A}_{\bar{e}}^D$ with
\begin{align}
\label{eq:admissible1}
\begin{split}
\mathcal{A}^D_{\bar{e}} &:= \{u \in W^{1,2}_{loc}(\R^3, \R^3): \ u(x) = (\bar{e} + s) x + b \mbox{ for } x \in \R^3 \setminus \Omega,\\
& \qquad \mbox{ for some } s \in \mbox{skew}(3), b \in \R^3\}.
\end{split}
\end{align}
Then for all $\nu \in (0,1)$ there exist $\epsilon_0>0$ and $c_{\nu}>0$ such that for all $\epsilon \in (0,\epsilon_0)$ we have that
\begin{align*}
\|e(u)- \bar{e}\|^4_{L^2(\Omega)} \leq \exp(c_{\nu}|\log(\epsilon)|^{\frac{1}{2} + \nu}) E_{\epsilon}(u,\chi).
\end{align*}
\end{thm}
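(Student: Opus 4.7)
The plan is to follow the strategy of the proof of Theorem \ref{thm:rigidity}, transferring the analysis from the torus $\T^3$ to a (large) periodic cell containing $\Omega$. First, I would fix a cube $Q_L = (0,L)^3$ with $\mathrm{dist}(\overline{\Omega}, \partial Q_L) > 0$, extend $u$ by the affine map $(\bar{e}+s)x+b$ on $Q_L \setminus \Omega$, and extend $\chi$ by the constant value $\bar{e}$ there, before periodizing and rescaling to the unit torus. The function $\tilde{\chi} := \chi - \bar{e}$ is then compactly supported in (the image of) $\Omega$; its total variation on the torus is controlled by $E_{surf}(\chi)$ plus a harmless $\epsilon$-multiplied boundary contribution bounded by $\mathcal{H}^2(\partial\Omega) \cdot \max_j |\eta_j - \bar{e}_{jj}|$, which is independent of $\epsilon$ and absorbed in the constants.

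The central new input compared to the periodic case is that the finite differences $\partial^h_\ell \tilde{\chi}_{ii}$ now attain values in the thirteen-element set of \eqref{eq:determinedvalues} (rather than in the seven-element set \eqref{eq:welldifferences}), since one of the points $x$, $x+h e_\ell$ may lie outside $\Omega$, contributing the auxiliary differences $\pm(\eta_j - \bar{e}_{ii})$. By the hypothesis $\bar{e} \in \mathcal{E} \setminus E$ together with Lemma \ref{lem:determinedE}, these thirteen values are pairwise distinct; hence the determinedness relation $\partial^h_\ell \tilde{\chi}_{jj} = P_{ij\ell}(\partial^h_\ell \tilde{\chi}_{ii})$ as in \eqref{eq:poly} still holds with polynomials $P_{ij\ell}$ of a fixed degree $d \le 12$ independent of $h$. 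Since this is the only nonlinear ingredient of the iterative step in Lemma \ref{lem:induction_basis}, the whole iterative Fourier support reduction scheme of Lemmas \ref{lem:induction_basis}--\ref{lem:inductive_step} transfers verbatim to the present setting, with $d \le 12$ replacing $d = 6$ in the constants $\tilde{C}_\gamma$.

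With the iteration in place, I would run it as in Section \ref{sec:thms_proof} until the Fourier supports of the remaining plane waves $f_{ij,m}, g_{ij,m}$ are concentrated in a ball whose radius $\mu_{m+1}$ is comparable to the inverse of the (rescaled) torus side-length. In the unit torus case of Theorem \ref{thm:rigidity}, this step forced $f_{ij,m+1} = g_{ij,m+1} = 0$ since no nonzero lattice point lies in the ball; here, instead, I would invoke the low-frequency estimate of \cite[Proposition 3.1(i)]{RRTT23}, which controls the $L^2$ mass of the remaining low-frequency part of $\tilde{\chi}$ by the surface and elastic energies, exploiting the fact that $\tilde{\chi}$ is compactly supported in $\Omega$ inside the periodic cell. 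This produces the analogue of \eqref{eq:energy_est1}, and the final optimization of $m$, $\gamma$, $\mu$, $\mu_1$ in $\epsilon$ is then carried out exactly as in the proof of Theorem \ref{thm:rigidity}, yielding the announced bound $\exp(c_\nu |\log\epsilon|^{1/2+\nu})$.

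The main obstacle is the verification that the determinedness polynomials are well-defined under the enlarged set of admissible finite-difference values, which is precisely the content of Lemma \ref{lem:determinedE} and the reason for the restriction $\bar{e} \notin E$. A secondary technical point is the integration of the low-frequency estimate from \cite{RRTT23} into the iterative error bookkeeping of Lemma \ref{lem:inductive_step}: one has to check that the extra error term it produces can be absorbed into $\sigma_{jj,m}$ without degrading either the exponent $(1-\gamma)^m$ or the multiplicative factor $\tilde{C}_\gamma^m$, so that the optimization in $\epsilon$ produces the same scaling as in the periodic case. Once these two points are settled, the conclusion $\|e(u)-\bar{e}\|_{L^2(\Omega)}^4 \leq \exp(c_\nu |\log\epsilon|^{1/2+\nu}) E_\epsilon(u,\chi)$ follows from the analogue of \eqref{eq:mean_cor}, using that $\chi - \bar{e}$ vanishes outside $\Omega$.
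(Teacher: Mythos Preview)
Your proposal is correct and follows essentially the same approach as the paper, which also reduces to the periodic argument together with the enlarged determinedness set from Lemma~\ref{lem:determinedE} and the low-frequency estimate of \cite[Proposition~3.1]{RRTT23}. The only cosmetic difference is that the paper works directly with the continuous Fourier transform on $\R^3$ rather than periodizing to a large torus, but this does not affect the structure of the argument.
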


\begin{proof}
The proof of Theorem \ref{thm:rigidity_Dirichlet} follows analogously to its periodic counterpart. The main difference consists of working with the continuous instead of the discrete Fourier transform and in the final optimization step using a low frequency estimate as, e.g., in \cite[Proposition 3.1.]{RRTT23}.
\end{proof}

\section{Proof of the upper bound in Theorem \ref{thm:scaling}}
\label{sec:upper}

In this section, we briefly discuss the upper bound construction in Theorem \ref{thm:scaling}. Since this is similar to the works \cite{C99,W97,RT22,RRT22} and also the qualitative constructions in \cite{BFJK94,M1}, we are rather brief at this point.

As the construction for the setting of prescribed affine Dirichlet data implies the construction for the case of prescribed mean value, we focus here on the Dirichlet setting. To this end, in what follows below, we will use the convention that $\Omega:=[0,1]^3$.

\begin{proof}[Proof of the upper bound in Theorem \ref{thm:scaling}]
As with at most four additional lamination steps, it is possible to reduce to this situation, without loss of generality, we assume that $\bar{e} = J_1:=\lambda e^{(1)} + (1-\lambda) J_3$ with $\lambda = \frac{\eta_1 + \eta_3 - 2\eta_2}{\eta_3 - \eta_1}$. We then argue iteratively. 

\emph{Step 1: First iteration.}
We realize the boundary conditions by a simple laminate between $e^{(1)}$ and $J_3$ with normal direction $b_{13}$ up to a cut-off function on a length scale $r_1 = \frac{1}{n}$ for a suitable choice of $n\in \N$:
\begin{align*}
u^{(1)}(x) = \big(u_1(r^{-1}_1 (x \cdot b_{13})) + \bar{e}x\big) (1-\dist(x/r_1, \partial \Omega))_+.
\end{align*}
Here $u_1$ denotes a displacement such that
\begin{align*}
e(u^{(1)}) = 
\left\{ \begin{array}{ll}
e^{(1)}, & x\cdot b_{13} \in [0, \lambda]\\
J_3,& x\cdot b_{13} \in [\lambda, 1],
\end{array} \right.
\end{align*}
which is periodically extended, and $\dist(x,\partial \Omega)$ is the distance function close to $\partial \Omega$ and
constant for $\dist(x,\partial \Omega)\geq \frac{1}{4}$.
We further define $\chi^{(1)}$ to be the pointwise projection (wherever it is well-defined and $e^{(1)}$ else) onto $K_3$. We then obtain
\begin{align*}
&E_{el}(u^{(1)},\chi^{(1)}) \leq C \left( (1-\lambda) + r_1 \right),\
E_{surf}(u^{(1)},\chi^{(1)}) \leq \frac{C}{r_1}, \ E_{\epsilon}(u^{(1)},\chi^{(1)}) \leq C\left((1-\lambda) + \frac{1}{r_1}\right).
\end{align*}

\emph{Step 2: Iterative construction.}
We iteratively increase the volume fractions in which $e(u^{(j)})$ is in the wells. 
We assume that at step $j-1$ (up to cut-off regions) $u^{(j-1)}$ is a nested laminate consisting of nested stripes of length scales $r_1,\dots,r_{j-1}$ with  $r_{\ell} \in (0, r_{\ell-1}/2)$ for all $\ell \in \N$, 
$e(u^{(j-1)}) \in K_3$ or $e(u^{(j-1)}) = J_{i}$ with the domains in which $e(u^{(j-1)}) = J_{i}$ consisting of stripes of width $r_{j-1}$ intersected with $\Omega$, and that the phase indicator $\chi^{(j-1)}$ is given by a pointwise projection onto $K_3$. Further, we assume that this carries an energy contribution of the form
\begin{align*}
E_{el}(u^{(j-1)},\chi^{(j-1)}) \leq C \left( \lambda^{j} + \sum\limits_{\ell = 2}^{j-1} \lambda^{\ell} \frac{r_{\ell}}{r_{\ell-1}} \right),
\ E_{surf}(u^{(j-1)},\chi^{(j-1)}) \leq C \frac{1}{r_{j-1}}.
\end{align*}
We then define $u^{(j)}, \chi^{(j)}$ by using that
\begin{align*}
J_{i} = \lambda e^{(i)} + (1-\lambda) J_{i-1},
\end{align*}
where we consider all indices modulo three, in particular, $e^{(0)} = e^{(3)}$, $J_0 = J_{3}$ and where $\lambda \in (0,1)$ is as above. More precisely, in any stripe intersected with $\Omega$ outside of the cut-off regions in which $e(u^{(j-1)}) = J_{i}$, we replace $u^{(j-1)}$ by a simple laminate (up to cut-offs) of periodicity $r_j$. The cut-off $ \chi^{(j)}$ is defined by the pointwise projection onto the wells. This improves the volume fraction in which the wells are attained by a factor $\lambda$ and leads to new surface energy contributions of cost $C/r_{j}$. Hence,
\begin{align*}
E_{el}(u^{(j)},\chi^{(j)}) \leq C \left( \lambda^{j} + \sum\limits_{\ell = 2}^{j} \lambda^{\ell} \frac{r_{\ell}}{r_{\ell-1}} \right),
\ E_{surf}(u^{(j)},\chi^{(j)}) \leq C \frac{1}{r_j}.
\end{align*}

\emph{Step 3: Energy estimates and optimization.}
In order to deduce the upper bound, it remains to estimate the overall energy contribution. To this end, we choose $r_j = r^j$ for some $r \in (0,1)$ to be fixed. Thus,
\begin{align*}
E_{\epsilon}(u^{(j)},\chi^{(j)}) \leq C\left( \lambda^j + r + \epsilon r^{-j} \right).
\end{align*}
Optimizing in $r$, we set $r \sim \epsilon^{\frac{1}{j+1}}$ and $j \sim \sqrt{\left|\tfrac{\log(\epsilon)}{\log(\lambda)}\right|}$. This then proves the claim with a constant $C=C(\lambda)>0$.
\end{proof}

\begin{rmk}
\label{rmk:exp}
Foreshadowing our numerical findings, the above construction shows that the length scales of the involved microstructure decrease exponentially with the lamination order $j \in \N$. We hence expect that a high resolution of this in three dimensions is extremely costly.
\end{rmk}

\part{Numerical Simulations of the \texorpdfstring{$T_3$}{T3} Microstructure}
\label{part:3}

\section{Numerical energy minimization problem}

The numerical simulations of the $T_3$ microstructures assume a material with three wells characterized by the eigenstrains
\begin{align*}
    e^{(1)}= \begin{pmatrix}
    \eta_1 & 0 & 0\\
    0 & \eta_2 & 0\\
    0 & 0 & \eta_3
\end{pmatrix}, \quad 
e^{(2)}= \begin{pmatrix}
    \eta_2 & 0 & 0\\
    0 & \eta_3 & 0\\
    0 & 0 & \eta_1
\end{pmatrix},\quad
e^{(3)}= \begin{pmatrix}
    \eta_3 & 0 & 0\\
    0 & \eta_1 & 0\\
    0 & 0 & \eta_2
\end{pmatrix},
\end{align*}
where $\eta_1,\eta_2,\eta_3\in \R$ are subject to the constraints~\cite{BhattacharyaEtAl1994}
\begin{align*}
    \eta_2 < \eta_1 < \eta_3\quad \text{and} \quad \eta_2+\eta_3 > 2 \eta_1.
\end{align*}
Here, unlike in the previous parts, we follow the numbering of the eigenstrains from~\cite{BhattacharyaEtAl1994}.
The above eigenstrains are typical for shape-memory alloys that undergo a cubic-to-orthorhombic phase transformation with three low-symmetry variants at room temperature. 

We assume a linear elastic material within the setting of linearized kinematics, so that each variant $\alpha=1,2,3$ is described by an elastic strain energy density
\begin{align*}
    E^{(\alpha)}(e) := \frac{1}{2}\left(e_{ij}-e^{(\alpha)}_{ij}\right)\C_{ijkl}\left(e_{kl}-e^{(\alpha)}_{kl}
    \right),
\end{align*}
where $\C$ is the fourth-order elasticity tensor. For simplicity, we assume elastic isotropy such that
\begin{align*}
    \C_{ijkl} = \lambda_e \delta_{ij}\delta_{kl} + \mu_e \left(\delta_{ik}\delta_{jl}+\delta_{il}\delta_{jk}\right)
\end{align*}
with Lam\'e constants $\lambda_e$ and $\mu_e$. All material and simulation parameters are listed in Tab.~\ref{tab:MaterialParams}.

To simulate complex microstructures, we reformulate the energy minimization problem as one that is numerically more readily tractable. To this end, we assume that at all three variants may in principle coexist, having volume fractions $\chi=(\chi^{(1)},\chi^{(2)},\chi^{(3)})$ with $\chi^{(\alpha)}\geq 0$ and $\sum_{\alpha=1}^3 \chi^{(\alpha)}=1$. Utilizing a Taylor model, the rule of mixture yields the total elastic energy density as
\begin{align*}
    E(e,\chi): = \sum^3_{\alpha=1} \chi^{(\alpha)} E^{(\alpha)}(e).
\end{align*}
Akin to phase-field models, the volume fractions $\chi^{(\alpha)}$ in this model act as phase fields, yet lacking a regularization. As we are interested in energy-minimizing microstructures over a domain $\Omega$, we must introduce a regularization such as to obtain physically meaningful microstructural patterns. In the phase field context, this is typically accomplished by introducing an interfacial energy. Here, instead, we introduce a configurational (not physical) entropy \cite{TanKochmann2017,KumarEtAl2020}
\begin{align*}
    S(\chi):=-k_T\sum^3_{\alpha=1}\chi^{(\alpha)}\log\chi^{(\alpha)},
\end{align*}
where $k_T>0$ is a constant (in physical models related to temperature). As a consequence, the free energy density can be defined as
\begin{align}
  \label{eq:freeEnergyF0}
    E(e,\chi):= W(e,\chi) - S(\chi) = \sum^3_{\alpha=1} \chi^{(\alpha)} \left(E^{(\alpha)}(e) + k_T\log\chi^{(\alpha)}\right).
\end{align}
Minimizing this free energy density allows to determine the energy-minimizing volume fractions locally as \cite{KumarEtAl2020}
\begin{align}
  \label{eq:lambdaStar}
    \chi_*^{(\alpha)}(e) := \argmin E(e,\cdot) = \frac{\exp\left(-\frac{E^{(\alpha)}(e)}{k_T}\right)}{\sum^3_{\beta=1}\exp\left(-\frac{E^{(\beta)}(e)}{k_T}\right)},
\end{align}
which, when inserted into~\eqref{eq:freeEnergyF0}, yields the effective potential energy density
\begin{align}
  \label{eq:condensedF0}
    E_*(e):=E\left(e,\chi_*(e)\right)=-k_T\log\left(\sum^3_{\alpha=1}\exp\left(-\frac{E^{(\alpha)}(e)}{k_T}\right)\right).
\end{align}
This is a numerically convenient approximation of 
\begin{align}
  \label{eq:ExactE}
    E(e):=\min\left\{E^{(1)}(e),E^{(2)}(e),E^{(3)}(e) \right\}, 
\end{align}
in which the constant $k_T>0$ can be used to control the ``sharpness'' of results. In the limit $k_T\to 0$, \eqref{eq:condensedF0} approaches \eqref{eq:ExactE}~\cite{TanKochmann2017,KumarEtAl2020}. Therefore, for low values of $k_T$, $\chi_*^{(\alpha)}$ approaches $0$ or $1$ for almost every choice of $e$, so that minimizing the total potential energy
\begin{align}
  \label{eq:totEng}
    E_{el}(u) := \int_{\Omega} E_*(e)
\end{align}
produces sharp patterns, in which every material point is (approximately) associated with a single variant $\alpha$ through $\chi^{(\alpha)}\approx 1$. Increasing $k_T$ results in more diffuse solutions, admitting intermediate values of $\chi_*^{(\alpha)}(e)\in(0,1)$ and hence resulting in patterns with more  diffuse interfaces. For our purposes, we will choose the former case and aim for sharp microstructures (with the value of $k_T$ in Tab.~\ref{tab:MaterialParams}).

From \eqref{eq:condensedF0} we obtain the Cauchy stress tensor as
\begin{align}
  \label{eq:stressStar}
    \sigma=\frac{\mathrm{d}E_*}{\mathrm{d}e}=\frac{\partial E_*}{\partial e} + \frac{\partial E_*}{\partial\chi_*}\frac{\mathrm{d}\chi_*}{\mathrm{d}e}
 =\sum^3_{\alpha=1}\chi_*^{(\alpha)}\C\left(e-e^{(\alpha)}\right),
\end{align}
where $\sigma^{(\alpha)}:=\C\left(e-e^{(\alpha)}\right)$ is the stress tensor associated with variant~$\alpha$. Note that $\partial E_*/\partial\chi_*=0$ by the definition of $\chi_*$ in \eqref{eq:lambdaStar}.

Finally, rendering \eqref{eq:totEng} stationary over a domain $\Omega$ is equivalent to solving linear momentum balance
\begin{align}
  \label{eqlinMomBalance0}
    \nabla\cdot\sigma =0\quad \text{in }\Omega,
\end{align}
with suitable boundary conditions.

\begin{table}[tb]
	\centering
	\caption{Material constants and simulation parameters.}
	\begin{tabular}{ll}
		\toprule
		parameter  & value  \\
		\midrule
		$\eta_1$  & 0.03  \\
        $\eta_2$  & 0.01  \\
        $\eta_3$  & 0.06  \\
        $\kappa$  & 0.04  \\
        $\lambda_{\text{e}}$ & 7/3\\
        $\mu_{\text{e}}$  & 1 \\
		$k_T$ & 10$^{-4}$    \\
        $L$ & 1    \\
        $N$ & 128, 256    \\
		\bottomrule
	\end{tabular}
	\label{tab:MaterialParams}
\end{table}

\section{Spectral Homogenization}

To identify energy-minimizing microstructures, we solve~\eqref{eqlinMomBalance0} numerically on a cube-shaped representative volume element (RVE) $\Omega=[0,L]^3$ of side length $L$. Without loss of generality, we apply periodic boundary conditions, which motivates the use of a Fourier-based spectral solution scheme. We discretize the cubic RVE into a regular grid of $N\times N\times N$ nodes. We here follow~\cite{Moulinec1998,Moulinec2003} and solve~\eqref{eqlinMomBalance0} in Fourier space. To this end, we define the (discrete) inverse Fourier transform of the displacement field $u$ (and any other function analogously) as
\begin{align}
  \label{eq:FourierTransform0}
	u(x) = \mathcal{F}^{-1}\left(\hat u\right)
		= \sum_{k\in\mathcal{T}} \hat u(k)\,\exp\left(i\,k\cdot x\right) \st{and} i =\sqrt{-1},
\end{align}
where $\hat u(k)$ are the Fourier coefficient vectors, and $k$ are wave vectors in the reciprocal lattice $\mathcal{T}$. Note that, since $\sum^3_{\alpha=1}\chi_*^{(\alpha)}=1$, 
\begin{align*}
    \sigma = \sum^3_{\alpha=1}\chi_*^{(\alpha)}\C\left(e-e^{(\alpha)}\right)
    = \C\, e - \C \sum^3_{\alpha=1}\chi_*^{(\alpha)} e^{(\alpha)} 
    =: \C\,e - \tau ,
\end{align*}
so that we must solve 
\begin{align}
  \label{eq:FourierLMB}
    \nabla \cdot\left(\C\,e - \tau\right) = 0.
\end{align}
Inserting \eqref{eq:FourierTransform0} into \eqref{eq:FourierLMB} along with the strain-displacement relation $e=\mathrm{sym}(\nabla u)$ and exploiting minor symmetry of $\C$ yields
\begin{align*}
    \C_{ijkl}u_{k,lj} = \tau_{ij,j}
    \RA
    -\C_{ijkl}\hat u_{k}(k)k_lk_j = i k_j\hat\tau_{ij}(k),
\end{align*}
which is an algebraic equation to be solved for the Fourier coefficients $\hat u_{k}$. Defining the acoustic tensor $A_{ik}(k) = \C_{ijkl} k_j k_l$ (which is positive-definite for $k\neq 0$), the strain-displacement relation finally leads to the solution
\begin{align}
  \label{eq:VarepsFourier0}
	\hat e_{ij}(k)
	= \begin{cases}
		\cfrac{1}{2} 
			\left[A_{ki}^{-1}(\bfk)k_j + A_{kj}^{-1}(\bfk) k_i \right]
			\hat\tau_{kl}(\bfk)k_l ,& k\neq 0,\\[0.2cm]
		\quad \overline e_{ij} ,& k=0,
		\end{cases}
\end{align}
whose second case ($k=0$) allows us to impose the volume-averaged, macroscopic strain tensor $\overline e$. Unfortunately, $\tau$ depends on $\chi_*$, which in turn depends on $e$. Therefore, \eqref{eq:VarepsFourier0} is solved by fixed-point iteration, until convergence is achieved. That is, starting from an initial guess for $e(x)$, we compute $\tau(x)$ in real space on the discrete grid, apply a Fourier transform to obtain $\tau(k)$, then apply \eqref{eq:VarepsFourier0} to solve for $\hat e$, back-transform to real space via the inverse Fourier transform to obtain $e(x)$ and further $\tau(x)$, and we repeat this procedure till convergence.

\subsection*{Acknowledgements}
A.R.~and A.T.~gratefully acknowledge funding by the Deutsche Forschungsgemeinschaft (DFG, German Research Foundation) through SPP 2256, project ID 441068247. C.Z.~acknowledges funding by the Deutsche Forschungsgemeinschaft (DFG, German Research Foundation) – Project-ID 258734477 – SFB 1173. R.I.~and D.M.~Kochmann gratefully acknowledge the support from the Swiss National Science Foundation (SNSF) through project 200021-178747.
 
\bibliographystyle{alpha}
\bibliography{citations1_v31,DMK}

\newcommand{\etalchar}[1]{$^{#1}$}
\begin{thebibliography}{SCFHW15}

\bibitem[Bal90]{B90}
John~M Ball.
\newblock Sets of gradients with no rank-one connections.
\newblock {\em Journal de math{\'e}matiques pures et appliqu{\'e}es},
  69(3):241--259, 1990.

\bibitem[BFJK94a]{BFJK94}
Kaushik Bhattacharya, Nikan~B. Firoozye, Richard~D. James, and Robert~V. Kohn.
\newblock Restrictions on microstructure.
\newblock {\em Proceedings of the Royal Society of Edinburgh: Section A
  Mathematics}, 124(5):843–878, 1994.

\bibitem[BFJK94b]{BhattacharyaEtAl1994}
Kaushik Bhattacharya, Nikan~B. Firoozye, Richard~D. James, and Robert~V. Kohn.
\newblock Restrictions on microstructure.
\newblock {\em Proceedings of the Royal Society of Edinburgh: Section A
  Mathematics}, 124(5):843--878, 1994.

\bibitem[BM23]{BM23}
Peter Bella and Roberta Marziani.
\newblock {$\Gamma $}-convergence for plane to wrinkles transition problem.
\newblock {\em arXiv preprint arXiv:2312.06469}, 2023.

\bibitem[CDPR{\etalchar{+}}20]{CDPRZZ20}
Pierluigi Cesana, Francesco Della~Porta, Angkana R{\"u}land, Christian
  Zillinger, and Barbara Zwicknagl.
\newblock Exact constructions in the (non-linear) planar theory of elasticity:
  from elastic crystals to nematic elastomers.
\newblock {\em Archive for Rational Mechanics and Analysis}, 237(1):383--445,
  2020.

\bibitem[Chi99]{C99}
Michel Chipot.
\newblock The appearance of microstructures in problems with incompatible wells
  and their numerical approach.
\newblock {\em Numer. Math.}, 83(3):325--352, 1999.

\bibitem[CK02]{CK00}
Miroslav Chleb{\'\i}k and Bernd Kirchheim.
\newblock Rigidity for the four gradient problem.
\newblock {\em Journal f{ü}r die reine und angewandte Mathematik}, 551(551),
  2002.

\bibitem[CKZ17]{CKZ17}
Sergio Conti, Matthias Klar, and Barbara Zwicknagl.
\newblock Piecewise affine stress-free martensitic inclusions in planar
  nonlinear elasticity.
\newblock {\em Proceedings of the Royal Society A: Mathematical, Physical and
  Engineering Sciences}, 473(2203):20170235, 2017.

\bibitem[CO09]{CO1}
Antonio Capella and Felix Otto.
\newblock A rigidity result for a perturbation of the geometrically linear
  three-well problem.
\newblock {\em Communications on Pure and Applied Mathematics},
  62(12):1632--1669, 2009.

\bibitem[CO12]{CO}
Antonio Capella and Felix Otto.
\newblock A quantitative rigidity result for the cubic-to-tetragonal phase
  transition in the geometrically linear theory with interfacial energy.
\newblock {\em Proceedings of the Royal Society of Edinburgh: Section A
  Mathematics, 142 , pp 273-327 doi:10.1017/S0308210510000478}, 2012.

\bibitem[CS13]{CS13}
Isaac~Vikram Chenchiah and Anja Schl{\"o}merkemper.
\newblock Non-laminate microstructures in monoclinic-{I} martensite.
\newblock {\em Archive for Rational Mechanics and Analysis}, 207(1):39--74,
  2013.

\bibitem[CT93]{CT93}
E~Casadio-Tarabusi.
\newblock An algebraic characterization of quasi-convex functions.
\newblock {\em Ricerche Mat}, 42(1):11--24, 1993.

\bibitem[Dac07]{D}
Bernard Dacorogna.
\newblock {\em Direct methods in the calculus of variations}, volume~78.
\newblock Springer, 2007.

\bibitem[DM95a]{DM2}
Georg Dolzmann and Stefan M{\"u}ller.
\newblock The influence of surface energy on stress-free microstructures in
  shape memory alloys.
\newblock {\em Meccanica}, 30:527--539, 1995.
\newblock 10.1007/BF01557083.

\bibitem[DM95b]{DM1}
Georg Dolzmann and Stefan M{\"u}ller.
\newblock Microstructures with finite surface energy: the two-well problem.
\newblock {\em Archive for Rational Mechanics and Analysis}, 132:101--141,
  1995.

\bibitem[DPR20]{DPR20}
Francesco Della~Porta and Angkana R{\"u}land.
\newblock Convex integration solutions for the geometrically nonlinear two-well
  problem with higher {S}obolev regularity.
\newblock {\em Mathematical Models and Methods in Applied Sciences},
  30(03):611--651, 2020.

\bibitem[FS08]{FS08}
Daniel Faraco and L{\'a}szl{\'o} Sz{\'e}kelyhidi.
\newblock Tartar’s conjecture and localization of the quasiconvex hull in
  {$\mathbb{R}^{2\times 2}$}.
\newblock {\em Acta mathematica}, 200(2):279--305, 2008.

\bibitem[FSJ18]{FS17}
Clemens F{\"o}rster and L{\'a}szl{\'o} Sz{\'e}kelyhidi~Jr.
\newblock {$T_5$}-configurations and non-rigid sets of matrices.
\newblock {\em Calculus of Variations and Partial Differential Equations},
  57(1):19, 2018.

\bibitem[GN04]{GN04}
A~Garroni and V~Nesi.
\newblock Rigidity and lack of rigidity for solenoidal matrix fields.
\newblock {\em Proceedings of the Royal Society of London. Series A:
  Mathematical, Physical and Engineering Sciences}, 460(2046):1789--1806, 2004.

\bibitem[Gra08]{Grafakos}
Loukas Grafakos.
\newblock {\em Classical {F}ourier analysis}, volume~2.
\newblock Springer, 2008.

\bibitem[GRTZ24]{GRTZ24}
Janusz Ginster, Angkana R{\"u}land, Antonio Tribuzio, and Barbara Zwicknagl.
\newblock On the effect of geometry on scaling laws for a class of martensitic
  phase transformations.
\newblock {\em arXiv preprint arXiv:2405.05927}, 2024.

\bibitem[GZ24]{GZ24}
Janusz Ginster and Barbara Zwicknagl.
\newblock Energy scaling laws for microstructures: from helimagnets to
  martensites.
\newblock {\em Calculus of Variations and Partial Differential Equations},
  63(1):8, 2024.

\bibitem[Kir03]{K1}
Bernd Kirchheim.
\newblock Rigidity and geometry of microstructures.
\newblock {\em MPI-MIS lecture notes}, 2003.

\bibitem[KKO13]{KKO13}
Hans Kn{\"u}pfer, Robert~V Kohn, and Felix Otto.
\newblock Nucleation barriers for the cubic-to-tetragonal phase transformation.
\newblock {\em Communications on pure and applied mathematics}, 66(6):867--904,
  2013.

\bibitem[KLLR19a]{KLLR19b}
Georgy Kitavtsev, Gianluca Lauteri, Stephan Luckhaus, and Angkana R\"{u}land.
\newblock A compactness and structure result for a discrete multi-well problem
  with {$SO(n)$} symmetry in arbitrary dimension.
\newblock {\em Arch. Ration. Mech. Anal.}, 232(1):531--555, 2019.

\bibitem[KLLR19b]{KLLR19a}
Georgy Kitavtsev, Gianluca Lauteri, Stephan Luckhaus, and Angkana R{\"u}land.
\newblock A compactness and structure result for a discrete multi-well problem
  with {$\text{SO}(n)$} symmetry in arbitrary dimension.
\newblock {\em Archive for Rational Mechanics and Analysis}, 232:531--555,
  2019.

\bibitem[KM{\v{S}}03]{KMS03}
Bernd Kirchheim, Stefan M{\"u}ller, and Vladim{\'\i}r {\v{S}}ver{\'a}k.
\newblock Studying nonlinear {P}{D}{E} by geometry in matrix space.
\newblock In {\em Geometric analysis and nonlinear partial differential
  equations}, pages 347--395. Springer, 2003.

\bibitem[Koh91]{K91}
Robert~V. Kohn.
\newblock The relaxation of a double-well energy.
\newblock {\em Continuum Mechanics and Thermodynamics}, 3:193--236, 1991.

\bibitem[KVK20]{KumarEtAl2020}
Siddhant Kumar, A.~Vidyasagar, and Dennis~M. Kochmann.
\newblock An assessment of numerical techniques to find energy-minimizing
  microstructures associated with nonconvex potentials.
\newblock {\em International Journal for Numerical Methods in Engineering},
  121(7):1595--1628, 2020.

\bibitem[KW14]{KW14}
Robert~V Kohn and Benedikt Wirth.
\newblock Optimal fine-scale structures in compliance minimization for a
  uniaxial load.
\newblock {\em Proceedings of the Royal Society A: Mathematical, Physical and
  Engineering Sciences}, 470(2170):20140432, 2014.

\bibitem[KW16]{KW16}
Robert~V Kohn and Benedikt Wirth.
\newblock Optimal fine-scale structures in compliance minimization for a shear
  load.
\newblock {\em Communications on Pure and Applied Mathematics},
  69(8):1572--1610, 2016.

\bibitem[MS98]{Moulinec1998}
H.~Moulinec and P.~Suquet.
\newblock {A numerical method for computing the overall response of nonlinear
  composites with complex microstructure}.
\newblock {\em Computer Methods in Applied Mechanics and Engineering},
  157(1-2):69--94, 1998.

\bibitem[M{\v{S}}99]{MS}
Stefan M{\"u}ller and Vladim{\'i}r {\v{S}}ver{\'a}k.
\newblock Convex integration with constraints and applications to phase
  transitions and partial differential equations.
\newblock {\em Journal of the European Mathematical Society}, 1:393--422, 1999.
\newblock 10.1007/s100970050012.

\bibitem[MS03a]{Moulinec2003}
H.~Moulinec and P.~Suquet.
\newblock {Comparison of FFT-based methods for computing the response of
  composites with highly contrasted mechanical properties}.
\newblock {\em Physica B: Condensed Matter}, 338(1-4):58--60, 2003.

\bibitem[M{\v{S}}03b]{MS03}
Stefan M{\"u}ller and Vladimir {\v{S}}ver{\'a}k.
\newblock Convex integration for {L}ipschitz mappings and counterexamples to
  regularity.
\newblock {\em Annals of mathematics}, 157(3):715--742, 2003.

\bibitem[M{\"u}l99]{M1}
Stefan M{\"u}ller.
\newblock Variational models for microstructure and phase transitions.
\newblock In {\em Calculus of variations and geometric evolution problems},
  pages 85--210. Springer, 1999.

\bibitem[NM91]{NM91}
Vincenzo Nesi and Graeme~W Milton.
\newblock Polycrystalline configurations that maximize electrical resistivity.
\newblock {\em Journal of the Mechanics and Physics of Solids}, 39(4):525--542,
  1991.

\bibitem[Pom10]{P10}
Waldemar Pompe.
\newblock The quasiconvex hull for the five-gradient problem.
\newblock {\em Calculus of Variations and Partial Differential Equations},
  37(3):461--473, 2010.

\bibitem[PP04]{PP04}
Mariapia Palombaro and Marcello Ponsiglione.
\newblock The three divergence free matrix fields problem.
\newblock {\em Asymptotic Analysis}, 40(1):37--49, 2004.

\bibitem[PW22]{PW22}
Jonas Potthoff and Benedikt Wirth.
\newblock Optimal fine-scale structures in compliance minimization for a
  uniaxial load in three space dimensions.
\newblock {\em ESAIM: Control, Optimisation and Calculus of Variations}, 28:27,
  2022.

\bibitem[Rin18]{Ri18}
Filip Rindler.
\newblock {\em Calculus of variations}.
\newblock Springer, 2018.

\bibitem[RRT23]{RRT22}
Bogdan Rai{\c{t}}{\u{a}}, Angkana R{\"u}land, and Camillo Tissot.
\newblock On scaling properties for two-state problems and for a singularly
  perturbed {$T_3$} structure.
\newblock {\em Acta Applicandae Mathematicae}, 184(1):5, 2023.

\bibitem[RRTT24]{RRTT23}
Bogdan Rai{\c{t}}{\u{a}}, Angkana R{\"u}land, Camillo Tissot, and Antonio
  Tribuzio.
\newblock On scaling properties for a class of two-well problems for higher
  order homogeneous linear differential operators.
\newblock {\em SIAM Journal on Mathematical Analysis}, 56(3):3720--3758, 2024.

\bibitem[RS23]{RS23}
Angkana R{\"u}land and Theresa~M Simon.
\newblock On rigidity for the four-well problem arising in the
  cubic-to-trigonal phase transformation.
\newblock {\em Journal of Elasticity}, 153(3):455--475, 2023.

\bibitem[RT22]{RT22}
Angkana R{\"u}land and Antonio Tribuzio.
\newblock On the energy scaling behaviour of a singularly perturbed {T}artar
  square.
\newblock {\em Archive for Rational Mechanics and Analysis}, 243(1):401--431,
  2022.

\bibitem[RT23]{RT22a}
Angkana R{\"u}land and Antonio Tribuzio.
\newblock On the energy scaling behaviour of singular perturbation models with
  prescribed {D}irichlet data involving higher order laminates.
\newblock {\em ESAIM: Control, Optimisation and Calculus of Variations}, 29:68,
  2023.

\bibitem[RT24]{RT23}
Angkana R{\"u}land and Antonio Tribuzio.
\newblock On the scaling of the cubic-to-tetragonal phase transformation with
  displacement boundary conditions.
\newblock {\em Journal of Elasticity}, pages 1--39, 2024.

\bibitem[RTZ18]{RTZ19}
Angkana R{\"u}land, Jamie~M Taylor, and Christian Zillinger.
\newblock Convex integration arising in the modelling of shape-memory alloys:
  some remarks on rigidity, flexibility and some numerical implementations.
\newblock {\em Journal of Nonlinear Science}, pages 1--48, 2018.

\bibitem[R{\"u}l16]{R16}
Angkana R{\"u}land.
\newblock The cubic-to-orthorhombic phase transition: Rigidity and non-rigidity
  properties in the linear theory of elasticity.
\newblock {\em Archive for Rational Mechanics and Analysis}, 221(1):23--106,
  2016.

\bibitem[R{\"u}l22]{R22}
Angkana R{\"u}land.
\newblock Rigidity and flexibility in the modelling of shape-memory alloys.
\newblock {\em Research in Mathematics of Materials Science}, pages 501--515,
  2022.

\bibitem[RZZ19]{RZZ16}
Angkana R{\"u}land, Christian Zillinger, and Barbara Zwicknagl.
\newblock Higher {S}obolev regularity of convex integration solutions in
  elasticity: The planar geometrically linearized hexagonal-to-rhombic phase
  transformation.
\newblock {\em Journal of Elasticity,
  https://doi.org/10.1007/s10659-018-09719-3}, 2019.

\bibitem[SCFHW15]{CS15}
Anja Schl{\"o}merkemper, Isaac~V Chenchiah, Rainer Fechte-Heinen, and Daniel
  Wachsmuth.
\newblock Upper and lower bounds on the set of recoverable strains and on
  effective energies in cubic-to-monoclinic martensitic phase transformations.
\newblock In {\em MATEC Web of Conferences}, volume~33, page 02011. EDP
  Sciences, 2015.

\bibitem[Sch75]{Sch75}
Vladimir Scheffer.
\newblock Regularity and irregularity of solutions to nonlinear second-order
  elliptic systems of partial differential-equations and inequalities.
\newblock {\em Princeton University Dissertations}, 1975.

\bibitem[Sim21a]{S21a}
Theresa~M Simon.
\newblock Quantitative aspects of the rigidity of branching microstructures in
  shape memory alloys via h-measures.
\newblock {\em SIAM Journal on Mathematical Analysis}, 53(4):4537--4567, 2021.

\bibitem[Sim21b]{S21}
Theresa~M Simon.
\newblock Rigidity of branching microstructures in shape memory alloys.
\newblock {\em Archive for Rational Mechanics and Analysis}, 241(3):1707--1783,
  2021.

\bibitem[ST23]{TS23}
Massimo Sorella and Riccardo Tione.
\newblock The four-state problem and convex integration for linear differential
  operators.
\newblock {\em Journal of Functional Analysis}, 284(4):109785, 2023.

\bibitem[{\v{S}}ve93]{S93}
Vladim{\'\i}r {\v{S}}ver{\'a}k.
\newblock On {T}artar's conjecture.
\newblock In {\em Annales de l'IHP Analyse non lin{\'e}aire}, volume~10 of {\em
  4}, pages 405--412, 1993.

\bibitem[Tar93]{T93}
Luc Tartar.
\newblock Some remarks on separately convex functions.
\newblock In {\em Microstructure and phase transition}, pages 191--204.
  Springer, 1993.

\bibitem[TK17]{TanKochmann2017}
Wei~Lin Tan and Dennis~M. Kochmann.
\newblock An effective constitutive model for polycrystalline ferroelectric
  ceramics: Theoretical framework and numerical examples.
\newblock {\em Computational Materials Science}, 136:223--237, 2017.

\bibitem[\v{S}93]{Sverak}
Vladim\'{\i}r \v{S}ver\'{a}k.
\newblock On the problem of two wells.
\newblock In {\em Microstructure and phase transition}, volume~54 of {\em IMA
  Vol. Math. Appl.}, pages 183--189. Springer, New York, 1993.

\bibitem[Win97]{W97}
Matthias Winter.
\newblock An example of microstructure with multiple scales.
\newblock {\em European J. Appl. Math.}, 8(2):185--207, 1997.

\end{thebibliography}

\end{document}